\author{S.Alvarez, N.Hussenot}
\date{}
\title{Singularities for analytic continuations of holonomy germs of Riccati foliations\\
       (Singularités des extensions analytiques des germes d'holonomie des feuilletages de Riccati)}
\begin{document}

\newtheorem{maintheorem}{Theorem}
\newtheorem{maincoro}[maintheorem]{Corollary}
\renewcommand{\themaintheorem}{\Alph{maintheorem}}
\newcounter{theorem}[section]
\newtheorem{exemple}{\bf Exemple \rm}
\newtheorem{exercice}{\bf Exercice \rm}
\newtheorem{conj}[theorem]{\bf Conjecture}
\newtheorem{defi}[theorem]{\bf Definition}
\newtheorem{lemma}[theorem]{\bf Lemma}
\newtheorem{proposition}[theorem]{\bf Proposition}
\newtheorem{coro}[theorem]{\bf Corollary}
\newtheorem{theorem}[theorem]{\bf Theorem}
\newtheorem{rem}[theorem]{\bf Remark}
\newtheorem{ques}[theorem]{\bf Question}
\newtheorem{propr}[theorem]{\bf Property}
\newtheorem{question}{\bf Question}
\def\bp{\noindent{\it Proof. }}
\def\ep{\noindent{\hfill $\fbox{\,}$}\medskip\newline}
\renewcommand{\theequation}{\arabic{section}.\arabic{equation}}
\renewcommand{\thetheorem}{\arabic{section}.\arabic{theorem}}
\newcommand{\eps}{\varepsilon}
\newcommand{\disp}[1]{\displaystyle{\mathstrut#1}}
\newcommand{\fra}[2]{\displaystyle\frac{\mathstrut#1}{\mathstrut#2}}
\newcommand{\dif}{{\rm Diff}}
\newcommand{\homeo}{{\rm Homeo}}
\newcommand{\Per}{{\rm Per}}
\newcommand{\Fix}{{\rm Fix}}
\newcommand{\A}{\mathcal A}
\newcommand{\Z}{\mathbb Z}
\newcommand{\Q}{\mathbb Q}
\newcommand{\R}{\mathbb R}
\newcommand{\C}{\mathbb C}
\newcommand{\N}{\mathbb N}
\newcommand{\T}{\mathbb T}
\newcommand{\U}{\mathbb U}
\newcommand{\D}{\mathbb D}
\newcommand{\PP}{\mathbb P}
\newcommand{\Sp}{\mathbb S}
\newcommand{\K}{\mathbb K}
\newcommand{\car}{\mathbf 1}
\newcommand{\g}{\mathfrak g}
\newcommand{\gs}{\mathfrak s}
\newcommand{\h}{\mathfrak h}
\newcommand{\rr}{\mathfrak r}
\newcommand{\fhi}{\varphi}
\newcommand{\ffhi}{\tilde{\varphi}}
\newcommand{\moins}{\setminus}
\newcommand{\ds}{\subset}
\newcommand{\W}{\mathcal W}
\newcommand{\WW}{\widetilde{W}}
\newcommand{\F}{\mathcal F}
\newcommand{\G}{\mathcal G}
\newcommand{\CC}{\mathcal C}
\newcommand{\RR}{\mathcal R}
\newcommand{\DD}{\mathcal D}
\newcommand{\M}{\mathcal M}
\newcommand{\B}{\mathcal B}
\newcommand{\cS}{\mathcal S}
\newcommand{\HH}{\mathcal H}
\newcommand{\Hyp}{\mathbb H}
\newcommand{\UU}{\mathcal U}
\newcommand{\Pp}{\mathcal P}
\newcommand{\QQ}{\mathcal Q}
\newcommand{\E}{\mathcal E}
\newcommand{\GG}{\Gamma}
\newcommand{\LL}{\mathcal L}
\newcommand{\KK}{\mathcal K}
\newcommand{\TT}{\mathcal T}
\newcommand{\X}{\mathcal X}
\newcommand{\Y}{\mathcal Y}
\newcommand{\ZZ}{\mathcal Z}
\newcommand{\bE}{\overline{E}}
\newcommand{\bF}{\overline{F}}
\newcommand{\wF}{\widetilde{F}}
\newcommand{\hcF}{\widehat{\mathcal F}}
\newcommand{\bW}{\overline{W}}
\newcommand{\bcW}{\overline{\mathcal W}}
\newcommand{\tL}{\widetilde{L}}
\newcommand{\diam}{{\rm diam}}
\newcommand{\diag}{{\rm diag}}
\newcommand{\Jac}{{\rm Jac}}
\newcommand{\Plong}{{\rm Plong}}
\newcommand{\Tr}{{\rm Tr}}
\newcommand{\Conv}{{\rm Conv}}
\newcommand{\Ext}{{\rm Ext}}
\newcommand{\Spec}{{\rm Sp}}
\newcommand{\Isom}{{\rm Isom}\,}
\newcommand{\Supp}{{\rm Supp}\,}
\newcommand{\Grass}{{\rm Grass}}
\newcommand{\Hold}{{\rm H\ddot{o}ld}}
\newcommand{\Ad}{{\rm Ad}}
\newcommand{\ad}{{\rm ad}}
\newcommand{\e}{{\rm e}}
\newcommand{\s}{{\rm s}}
\newcommand{\pol}{{\rm pole}}
\newcommand{\Aut}{{\rm Aut}}
\newcommand{\End}{{\rm End}}
\newcommand{\Leb}{{\rm Leb}}
\newcommand{\Liouv}{{\rm Liouv}}
\newcommand{\Lip}{{\rm Lip}}
\newcommand{\Int}{{\rm Int}}
\newcommand{\cc}{{\rm cc}}
\newcommand{\grad}{{\rm grad}}
\newcommand{\proj}{{\rm proj}}
\newcommand{\mass}{{\rm mass}}
\newcommand{\dive}{{\rm div}}
\newcommand{\dist}{{\rm dist}}
\newcommand{\im}{{\rm Im}}
\newcommand{\re}{{\rm Re}}
\newcommand{\codim}{{\rm codim}}
\newcommand{\Map}{\longmapsto}
\newcommand{\vide}{\emptyset}
\newcommand{\tr}{\pitchfork}
\newcommand{\ssl}{\mathfrak{sl}}

\newenvironment{demo}{\noindent{\textbf{Proof.}}}{\quad \hfill $\square$}
\newenvironment{pdemo}{\noindent{\textbf{Proof of the proposition.}}}{\quad \hfill $\square$}
\newenvironment{IDdemo}{\noindent{\textbf{Idea of proof.}}}{\quad \hfill $\square$}

\def\to{\mathop{\rightarrow}}
\def\act{\mathop{\curvearrowright}}
\def\To{\mathop{\longrightarrow}}
\def\Sup{\mathop{\rm Sup}}
\def\Max{\mathop{\rm Max}}
\def\Inf{\mathop{\rm Inf}}
\def\Min{\mathop{\rm Min}}
\def\lims{\mathop{\overline{\rm lim}}}
\def\limi{\mathop{\underline{\rm lim}}}
\def\egal{\mathop{=}}
\def\dans{\mathop{\subset}}
\def\surj{\mathop{\twoheadrightarrow}}

\def\h#1#2#3{ {\rm{hol}}^{#1}_{#2\rightarrow#3}}

\maketitle

\begin{abstract}
In this paper we study the problem of analytic extension of holonomy germs of algebraic foliations. More precisely we prove that for a Riccati foliation associated to a branched projective structure over a finite type surface which is non-elementary and parabolic, all the holonomy germs between a fiber and the corresponding holomorphic section of the bundle are led to singularities by almost every developed geodesic ray. We study in detail the distribution of these singularities and prove in particular that they form a dense uncountable subset of the limit set. This gives another negative answer to a conjecture of Loray (already disproved in \cite{CDFG}) using a completely different method, namely the ergodic study of the foliated geodesic flow initiated in \cite{BG,BGV,BGVil}. 
\end{abstract}

\selectlanguage{francais}
\begin{abstract}
Dans cet article, nous étudions le problème d'extension analytique de germes d'holonomie de feuilletages algébriques. Plus précisément, nous démontrons que pour un feuilletage de Riccati associée à une structure projective branchée sur une surface de type fini qui est non-élémentaire et parabolique, tous les germes d'holonomies entre une fibre et la section holomorphe du fibré vertical correspondante sont conduits vers une singularité par presque tout chemin géodésique développé. Nous étudions en détail la distribution de ces singularités et prouvons en particulier qu'elles forment une partie dense et indénombrable de l'ensemble limite. Cela redonne une réponse négative à une conjecture de Loray (qui avait déjà été infirmée dans \cite{CDFG}) en utilisant une méthode complètement différente: l'étude ergodique du flot géodésique feuilleté initiée dans \cite{BG,BGV,BGVil}. 
\end{abstract}

\selectlanguage{english}

\section{Introduction}

\paragraph{Analytic continuation of holonomy maps.} The present paper is devoted to the problem of extending analytically holonomy germs of holomorphic foliations of the complex projective plane, and of algebraic surfaces such as $\C\PP^1$-bundles over compact Riemann surfaces. The study of \emph{holonomy maps}, or \emph{Poincaré maps}, of a foliation is of special interest since they encode the dynamical behaviour of its leaves. For example a fixed point of a holonomy map corresponds to a periodic leaf of the foliation: analytic properties of holonomy maps are closely related to various interesting and famously difficult questions concerning periodic leaves such as their number or their persistence.

For example in \cite{FRY} Françoise, Roytvarf and Yomdin study the analytic continuations, the fixed points and the singularities of holonomy maps of the Abel differential equation in relation with Pugh's problem about the number of isolated real periodic solutions. 

In \cite{I2} Ilyashenko relates the problems of simultaneous uniformization of the leaves of a foliation of $\C^k$ by analytic curves (with a uniformizing function which depends analytically on the initial condition) and of persistence of complex limit cycles. The relation he finds is closely related to the extension property: the non-extendability of holonomy maps is an obstruction to simultaneous uniformization. This led Ilyashenko to ask whether holonomy germs of generic polynomial vector fields exhibit algebraic or transcendental behaviour, namely if they can be analytically continued along \emph{most} real rays (see Problems 8.6. and 8.7. of \cite{I1} and Problem 8 of \cite{I3}).

\paragraph{Loray's conjecture.} In his study of Painlevé's work on algebraic differential equations Loray states the following conjecture (Conjecture 1 of \cite{Lo}).

\emph{A germ of holonomy map} $h:(T_0,p_0)\to(T_1,p_1)$ \emph{between two algebraic transversals of an algebraic foliation of $\C\PP^2$ can be analytically continued along every path which avoids a countable number of points of} $T_0$ \emph{called the singularities.}

His idea is that if this conjecture were established it would be possible to replace the study of the \emph{holonomy pseudogroup} by that of a (possibly very consistent) group and that a Galois theory for algebraic foliations similar to the one described in \cite{K} could be derived from it.

In \cite{CDFG} Calsamiglia, Deroin, Frankel and Guillot give a very precise answer to this conjecture. Their answer depends on the dynamical properties of the leaves. If a foliation has rich contracting dynamics, then Loray's conjecture does not hold for this foliation. In the other case it holds true. More precisely, they prove the following:
\begin{enumerate}
\item \emph{Loray's conjecture holds true for singular foliations given by closed meromorphic} $1$-\emph{forms on the complex projective plane}.
\item \emph{A Riccati foliation whose holonomy representation is given by uniformization has holonomy germs between lines with a natural boundary}.
\item \emph{A Riccati foliation whose holonomy representation is parabolic with a dense image in} $PSL_2(\C)$ \emph{has holonomy germs between lines with a full singular set}.
\item \emph{And finally Loray's conjecture is false for a generic foliation of} $\C\PP^2$: \emph{such a foliation possesses a holonomy germ from a line to an algebraic curve whose singular set contains a Cantor set}.
\end{enumerate}

Even if the fourth property is the most spectacular, in this paper we will focus on the second and third ones. The proof of the third property consists in using the density of the holonomy group in $PSL_2(\C)$ in order to construct inductively paths which lead to singularities. A question arose naturally: \emph{are the holonomy germs led to singularities by a ``generic'' path}?

Of course the term \emph{generic} has to be precised. In \cite{H} the second author studied the analytic continuation of holonomy germs along Brownian paths and the answer he found was quite unexpected (see Theorem \ref{hussenot1} for the precise statement).

\emph{Given a Riccati foliation whose holonomy group is parabolic and acts minimally on} $\C\PP^1$, \emph{any holonomy germ between two lines} $T_1$ \emph{and} $T_2$ \emph{can be analytically continued along almost all Brownian paths}.

The context of Riccati foliation is not generic, but proves to be an excellent one when we want to explore the links between dynamics of foliations and extendability properties of holonomy maps. Indeed \cite{CDFG,H} both use the ``duality'' between holonomies between lines of Riccati foliations and projective structures on surfaces of finite type that we shall describe below.

\paragraph{Riccati foliations and projective structures.} Riccati equations are of the form:
\begin{equation}
\label{Eq:Riccati}
\frac{dy}{dx}=a(x)y^2+b(x)y+c(x),
\end{equation}
where $a,b,c$ are rational functions of the complex variable $x$. It is well known (see \cite{Hi}) that this equation is in reality a disguised linear differential equation $d\mathbf{w}/dx=A(x)\mathbf{w}$ where $\mathbf{w}\in\C^2$ and $A(x)$ is a $2\times 2$ matrix whose entries depend rationally on $x$, and that they are characterized by the possibility of finding locally a basis of local solutions which can be analytically continued along every path avoiding the finite set of poles of $A$ that we denote by $(A)_{\infty}$. This gives rise to a holonomy representation $\widetilde{\rho}:\pi_1(\Sigma)\to GL_2(\C)$, where $\Sigma=\C\PP^1\moins(A)_{\infty}$.

Hence the \emph{Riccati foliation} of $\C\PP^1\times\C\PP^1$ given by the analytic continuations of the solutions of this equation has the following description. There are only a finite number of invariant fibers (vertical leaves with singularities) which are the $L_{x_0}=\{x_0\}\times\C\PP^1$, where $x_0\in(A)_{\infty}$, and any other leaf is \emph{everywhere} transverse to the vertical fibers. Moreover the $\C\PP^1$-bundle over $\Sigma$ that we denote by $\Pi:M\to\Sigma$ obtained by removing the invariant fibers is exactly the one obtained by suspension of the projectivization of $\widetilde{\rho}$ denoted by $\rho:\pi_1(\Sigma)\to PSL_2(\C)$: we denote by $\F$ the induced Riccati foliation on $M$.

Let $\overline{S}$ be a line in $\C\PP^1\times\C\PP^1$ which is not vertical (or any holomorphic section of the vertical bundle). Then $S=\overline{S}\moins\bigcup_{x_0\in (A)_{\infty}} L_{x_0}$ is a holomorphic copy of $\Sigma$ which is everywhere transverse to $\F$ except maybe at a finite number of tangency points. The holonomy of $\F$ between $S$ and any vertical fiber $F_p\simeq\C\PP^1$ then defines naturally a \emph{branched projective structure} on $\Sigma$ which is determined by a development-holonomy pair $(\DD,\rho)$ where $\rho$ is exactly the holonomy representation of the equation and $\DD$ is a nonconstant holomorphic map from $\Hyp$, the universal cover of $\Sigma$, to $\C\PP^1$ whose critical points are the lifts of the tangency points and which is $\rho$-equivariant.

A similar construction can be performed when the variable $x$ describes a more general algebraic curve. This leads us to consider Riccati foliations obtained by suspension of representations $\rho:\pi_1(\Sigma)\to PSL_2(\C)$, as well as branched projective structures on more general hyperbolic surfaces of finite type $\Sigma$.

\paragraph{Main result.} We will study the problem of analytic extension of germs of holonomy maps along ``generic paths''. In \cite{H} ``generic'' meant  typical for the \emph{Brownian motion}. Here it will mean typical for the \emph{geodesic flow}. And the result we obtain is the exact opposite answer.

Recall that a path leads a germ of holomorphic map between Riemann surfaces to a \emph{singularity} if analytic continuation can be performed along the path, but not beyond its extremity.

\begin{maintheorem}
\label{analyticccontinuation}
Let $\Sigma$ be a hyperbolic surface of finite type. Consider a branched projective structure on $\Sigma$, represented by a development-holonomy pair $(\DD,\rho)$, that is parabolic and non-elementary. Let $(\Pi,M,\Sigma,\C\PP^1,\F)$ be the associated Riccati foliation, $\sigma^0$ be the holomorphic section of $\Pi$ associated to $\DD$, and $S=\sigma^0(\Sigma)$.

Let $p\in\Sigma$ and $x\in F_p$. Then every holonomy germ $h$ at $x$ between $F_p$ and $S$ is led to a singularity along a typical developed geodesic ray starting at $x$.

Moreover this set of singularities is an uncountable dense subset of the limit set $\Lambda_{\rho}$. Better: it is distributed according to the harmonic measure $m_p$.
\end{maintheorem}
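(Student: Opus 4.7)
The plan is to identify the holonomy germ with a local branch of the inverse of the developing map $\DD$, turning the problem into one about the behavior of the branched projective structure along hyperbolic geodesics. Concretely, fix a lift $\tilde p \in \Hyp$ of $p$ and a local inverse of $\DD$ sending $x$ to $\tilde p$; then $h$ is represented by $z \mapsto \sigma^0(\pi(\DD^{-1}(z)))$ with $\pi : \Hyp \to \Sigma$ the covering. A developed geodesic ray from $x$ becomes the image $\DD \circ \gamma_\xi$ of a hyperbolic geodesic ray $\gamma_\xi$ from $\tilde p$ towards $\xi \in \partial \Hyp$, and continuation of $h$ along it tracks $\gamma_\xi(t)$ in $\Hyp$ as long as $\gamma_\xi$ avoids the discrete critical locus of $\DD$ (which happens for $m_p$-a.e.\ $\xi$).

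The core of the proof is a Fatou-type theorem: for $m_p$-a.e.\ $\xi \in \partial \Hyp$, the curve $\DD(\gamma_\xi(t))$ converges in $\C\PP^1$ to a limit $E(\xi)$. I would derive this from the ergodic theory of the foliated geodesic flow on $T^1 \F$ initiated in \cite{BG,BGV,BGVil}, whose harmonic (Garnett-type) invariant measures project to $m_p$ on leafwise boundaries: a Birkhoff-type argument applied to the projective holonomy cocycle of $\F$ in $PSL_2(\C)$, combined with contraction estimates coming from the non-elementarity and parabolicity of $\rho$, shows that the random frame transported along the ray converges almost surely; the limit $E(\xi)$ lies in $\Lambda_\rho$ by $\rho$-equivariance and the identification of $\Lambda_\rho$ as the minimal invariant closed subset of $\C\PP^1$.

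Next I would verify that $E(\xi)$ is a genuine singularity of the continuation. If $h$ extended holomorphically to a neighborhood of $E(\xi)$, continuity of the extended germ would force $\sigma^0(\pi(\gamma_\xi(t)))$ to converge in $S$ as $t \to \infty$, hence the projected geodesic $\pi \circ \gamma_\xi$ to converge to a point $q \in \Sigma$. But this projected curve is a hyperbolic geodesic in $\Sigma$, and for $m_p$-a.e.\ $\xi$ it is non-trivially recurrent in $T^1 \Sigma$ by Poincar\'e recurrence for the Liouville-measure-preserving geodesic flow; parabolicity is used here to control the time spent in cusp neighborhoods so that the geodesic does not escape to a cusp. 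This recurrence forbids convergence in $\Sigma$, yielding the contradiction that establishes singularity.

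The distribution statement then follows: $E : \partial \Hyp \to \Lambda_\rho$ is measurable and $\rho$-equivariant, so $E_\ast m_p$ is a Borel probability measure on $\Lambda_\rho$; non-elementarity prevents atoms (an atom would give a $\rho$-invariant finite orbit), hence the image of $E$ is uncountable, and full support of $E_\ast m_p$ (by minimality of the $\rho$-action on $\Lambda_\rho$) gives density. The technical heart of the argument, and the step I expect to be the main obstacle, is establishing the almost sure convergence of $\DD \circ \gamma_\xi$ in the parabolic regime: cusp excursions can produce arbitrarily large distortions of the developing map, and one has to show that their cumulative effect is controlled along $m_p$-typical geodesics.
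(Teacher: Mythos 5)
Your proposal has the correct overall architecture, and it matches the paper's strategy at each level: the duality between the holonomy germ $h$ and the developing map, reducing the singularity question to a Fatou-type convergence theorem for $\DD\circ\gamma_\xi$; verifying that a limit point is a genuine singularity via the non-convergence of the projected geodesic in $\Sigma$; and deducing uncountability and density of the singularity set from non-atomicity (no finite invariant orbit) and minimality of the $\rho$-action on $\Lambda_\rho$. The singularity-verification step you spell out (an extension of $h$ past $E(\xi)$ would force $\pi\circ\gamma_\xi$ to converge to a point of $\Sigma$, contradicting recurrence) is in fact only implicit in the paper's ``dual problems'' discussion, so you have added useful content there. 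The distribution argument also aligns with what the paper does in Section~3.

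The gap is in what you present as the core step and then treat as a black box: the almost-sure convergence of $\DD\circ\gamma_\xi$. ``Birkhoff-type argument applied to the projective cocycle, with contraction estimates'' does not identify the actual mechanism. The developed geodesic ray is $\DD(g_t(v)) = A_t(v)^{-1}\sigma^0(g_t(v))$, a \emph{moving} point in the fiber pushed backwards by the cocycle. After the Oseledets splitting is established and positivity of $\lambda^+$ is obtained (via Avila--Viana together with the Ledrappier--Sarig symbolic model, since $\Sigma$ is noncompact), the fiberwise dynamics of $A_t(v)^{-1}$ is North--South with attractor $\sigma^-(v)$ and a shrinking repelling disc around $\sigma^+(g_t(v))$ of radius $e^{-\lambda_1 t}$. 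Convergence to $\sigma^-(v)$ therefore requires showing the \emph{target} $\sigma^0(g_t(v))$ does not drift into that shrinking disc around the repelling Lyapunov section, i.e.\ that $\dist_{\C\PP^1}(\sigma^0(g_t v),\sigma^+(g_t v))$ decays subexponentially along typical orbits. Your phrase ``contraction estimates'' does not see this issue at all: the danger is not a lack of contraction but that the diagonal section could shadow the repeller. The paper resolves this by a Borel--Cantelli argument reducing it to an integrability statement for $\log\dist_{\C\PP^1}(\sigma^+,\sigma^0)$ against the Liouville measure, whose proof (the bulk of Section~5) carefully analyzes the conformal distortion of $\DD$ near critical points on the compact part and, crucially, in the cusps, using the parabolic normal form of the structure. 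You correctly flag cusp excursions as the main obstacle, but you offer no mechanism to control them; in particular nothing in your sketch explains why parabolicity (as opposed to, say, merely parabolic holonomy with a non-parabolic developing map such as $\log z/(2\mathbf{i}\pi)+1/z^n$) is the right hypothesis. Without the avoidance-of-$\sigma^+$ estimate, the Fatou-type theorem you assert is not established, and the rest of the argument does not close.
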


Let us explain some of the terms appearing in the statement. A branched projective structure $(\DD,\rho)$ is said to be \emph{parabolic} if the developing map reads as $z\mapsto\log z/(2\mathbf{i}\pi)$ in a holomorphic coordinate $z$ around each puncture. This implies that the holonomy map around a puncture is conjugated to a translation.

We call \emph{developed geodesic rays} the images of geodesic rays by the developing map, and we say that a developed geodesic ray starting at a point $x$ is typical if it is the image of a ray which is typical for the Lebesgue measure.

When the representation does not preserve a measure on $\C\PP^1$, we say that the structure is \emph{non-elementary}. Its action on the sphere has a unique minimal set $\Lambda_{\rho}$ that we call the \emph{limit set} of $\rho$. Moreover \cite{DD} introduced the notion of family of \emph{harmonic measures} which is the unique family of probability measures $(m_z)_{z\in\Hyp}$ on $\C\PP^1$ which is $\rho$-equivariant and \emph{harmonic}, meaning that for every Borel set $A\dans\C\PP^1$ the map $z\mapsto m_z(A)$ is harmonic for the Laplace operator.

In particular all these measures are equivalent (by the mean property) and supported by the limit set $\Lambda_{\rho}$ (by equivariance). By equivariance this gives a family $(m_p)_{p\in\Sigma}$ on the fibers $F_p$ of the foliated bundle.

\paragraph{Compactification of Riccati foliations.} Theorem \ref{analyticccontinuation} deals with analytic continuation of holonomy germs of a \emph{non-singular} foliation transverse to a $\C\PP^1$-bundle over a \emph{non-compact} hyperbolic surface $\Sigma$.

When the foliation is associated to a \emph{parabolic} branched projective structure, it can be compactified (see Section 3.2 of \cite{CDFG}, and Section 1.2 of \cite{DD}). This is done by gluing over the cusps a local model for \emph{meromorphic} flat connection on the disc $\D$, with a single pole at $0$ and parabolic monodromy. Different models of this sort can be found in Brunella's work on the birational geometry of foliations \cite{Br}.

This way, we obtain a $\C\PP^1$-bundle over a compact surface, whose fibers are transverse to a \emph{singular} foliation, except a finite number of them. The section $S$ also compactifies as a complex curve $\overline{S}$, and the conclusion of Theorem \ref{analyticccontinuation} also holds for holonomy germs from a generic fiber to $\widetilde{S}$. In that sense, our result disproves Loray's conjecture for algebraic foliations of $\C\PP^1$-bundles over compact surfaces.

\paragraph{The dual result.} In order to prove Theorem \ref{analyticccontinuation}, we first give the answer to a dual problem, which can be expressed in terms of branched projective structures.

There is a duality between holonomy maps and development which is used in \cite{CDFG,H}. Consider a Riccati foliation $\F$ and a local holonomy map $h$ between a fiber $F_p$, $p\in\Sigma$ and a holomorphic section of $\Sigma$. By definition $h$ is the inverse of the developing map $\DD$ restricted to the range of $h$. We shall focus on the research of \emph{asymptotic singularities} of continuations of $h$, i.e. limits of paths $\DD(c)$ where $c:[0;\infty)\to\Hyp$ has a limit in $\partial\Hyp=\R\PP^1$. Hence there are two problems which are dual.
\begin{enumerate}
\item Prove that for a path $c:[0;\infty)\to\Hyp$ having a limit in $\R\PP^1$, the path $\gamma=\DD(c)$ has a limit in $\C\PP^1$.
\item Prove that the path $\gamma$ leads the holonomy germ $h:(F_p,x)\to(S,x')$ to an asymptotic singularity, where $x$ denotes $\gamma(0)$ and $x'$ denotes the projection on $S$ of $c(0)$.
\end{enumerate}

Let us introduce the main dynamical character of this paper. The leaves of a Riccati foliation $\F$ are naturally endowed with a hyperbolic metric (by lifting the metric of the base) so that it is possible to consider the \emph{foliated geodesic flow} $G_t$ on the unit tangent bundle of the foliation $T^1\F$. This flow possesses a weak form of hyperbolicity called in \cite{BGM} \emph{foliated hyperbolicity} (see also the first author's thesis \cite{Al4}). We will use the ergodic properties of this flow in order to prove our main results: our method is very much in the spirit of \cite{Al3,BG,BGV,BGVil}.

Since the leaves are locally isometric to the base, the foliated geodesic flow projects down to the geodesic flow of the base and sends fibers to fibers as a projective map (see Paragraph \ref{foliatedflows}): it is a \emph{projective cocycle}. Under the condition of parabolicity of the structure, \cite{BGVil} proves that Oseledets' theorem applies and that Lyapunov exponents exist. If moreover the holonomy representation does not preserve a probability measure on $\C\PP^1$, then a combination of the works of Avila-Viana \cite{AV} and Ledrappier-Sarig \cite{LS} yields the positivity of the top Lyapunov exponent (see Theorem \ref{positivitylyapexp}). Oseledets' theorem then provides two measurable \emph{Lyapunov sections} $\sigma^-,\sigma^+:T^1\Sigma\to T^1\F$ well defined on a Borel set full for the Liouville measure. Consider the lifts $\widetilde{\sigma}^-,\widetilde{\sigma}^+:T^1\Hyp\to T^1\Hyp\times\C\PP^1$. The dual result of Theorem \ref{analyticccontinuation} is:

\begin{maintheorem}
\label{limitedprojective}
Let $\Sigma$ be a hyperbolic surface of finite type. Consider a branched projective structure on $\Sigma$, represented by a development-holonomy pair $(\DD,\rho)$, that is parabolic and non-elementary. Then for every $z\in\Hyp$ and $d\theta$-almost every $v\in T^1_z\Hyp$:
$$\DD(c_v(t))\To_{t\to\infty}\widetilde{\sigma}^-(v),$$
where $c_v$ represents the geodesic ray directed by $v$, and $\widetilde{\sigma}^-$ is the lift of the Lyapunov section $\sigma^-$ to $T^1\Hyp$.
\end{maintheorem}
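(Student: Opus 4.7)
The plan is to rewrite $\DD(c_{\tilde v}(t))$ as the forward image of the holomorphic section $\sigma^0$ by the \emph{inverse} projective cocycle over the foliated geodesic flow, and then use Oseledets' theorem (available thanks to Theorem~\ref{positivitylyapexp}) to identify its limit with $\widetilde{\sigma}^-(\tilde v)$. First I would pick a Borel fundamental domain for the action of $\pi_1(\Sigma)$ on $\Hyp$ together with the associated trivialisation of the $\C\PP^1$-bundle, and let $v\in T^1\Sigma$ be the projection of $\tilde v$. For $t\geq 0$, let $\alpha_t\in\pi_1(\Sigma)$ be the deck transformation sending $c_{\tilde v}(t)$ back into this fundamental domain; the projective cocycle of $G_t$ from $F_{\pi v}$ to $F_{\pi\phi_tv}$ then reads $A_t(v)=\rho(\alpha_t^{-1})$. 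The $\rho$-equivariance $\DD(\alpha z)=\rho(\alpha)\DD(z)$ yields, in this trivialisation, $\sigma^0(\pi\phi_tv)=\rho(\alpha_t^{-1})\DD(c_{\tilde v}(t))=A_t(v)\DD(c_{\tilde v}(t))$, and hence
$$\DD(c_{\tilde v}(t))=A_t(v)^{-1}\sigma^0(\pi\phi_tv).$$
Understanding the limit of the developed ray reduces to understanding the forward action of the inverse cocycle on the moving section value $\sigma^0(\pi\phi_tv)$.

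Second, I would apply Oseledets. Theorem~\ref{positivitylyapexp} gives a positive top Lyapunov exponent $\lambda$, and \cite{BGVil} yields Oseledets' theorem for $A_t$ in the parabolic setting, with measurable sections $\sigma^\pm$. Lifting to $SL_2(\C)$, the Oseledets splitting $E^u\oplus E^s$ satisfies $\|A_tw\|\sim e^{\lambda t}\|w\|$ for $w\in E^u$ (projectivising to $\widetilde{\sigma}^+$) and $\|A_tw\|\sim e^{-\lambda t}\|w\|$ for $w\in E^s$ (projectivising to $\widetilde{\sigma}^-$). A direct projective computation then shows that, for Liouville-a.e.\ $v$ and any $w\in\C^2_{\phi_tv}$ whose $E^s(\phi_tv)$-component is not negligible compared with its $E^u(\phi_tv)$-component, the image $A_t(v)^{-1}w$ converges projectively to $\widetilde{\sigma}^-(\tilde v)$ at exponential speed $e^{-2\lambda t}$. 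The task reduces to verifying this transversality condition for $w=\sigma^0(\pi\phi_tv)$ along a Liouville-typical orbit.

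The main obstacle is the quantitative transversality of $\sigma^0\circ\pi$ and $\sigma^+$. The qualitative non-coincidence is cheap: if $\sigma^+=\sigma^0\circ\pi$ on a positive Liouville measure set, then by ergodicity of $G_t$ the equality holds a.e., and the $G_t$-invariance of $\sigma^+$ forces $A_t(v)\sigma^0(\pi v)=\sigma^0(\pi\phi_tv)$; combined with $\sigma^0(\pi\phi_tv)=A_t(v)\DD(c_{\tilde v}(t))$, this reads $\DD(c_{\tilde v}(0))=\DD(c_{\tilde v}(t))$ for a.e.\ geodesic, contradicting non-constancy of $\DD$. The hard part is upgrading this to log-integrability of $f(v)=\log d_{\C\PP^1}(\sigma^0(\pi v),\sigma^+(v))$ with respect to the (finite) Liouville measure. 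This is the technical heart: it uses the H\"older regularity of $\sigma^+$ along the stable and unstable foliations of $G_t$---a byproduct of the foliated hyperbolicity developed in \cite{BGM,Al4}---together with parabolicity to control the cusps. Once $f\in L^1$, Birkhoff's ergodic theorem gives $f(\phi_tv)=o(t)$ along a typical orbit, which easily dominates the rate $-2\lambda t$ from Step~2 and yields $\DD(c_{\tilde v}(t))\to\widetilde{\sigma}^-(\tilde v)$ Liouville-a.e.

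Finally, to promote the conclusion to ``\emph{every} $z\in\Hyp$ and $d\theta$-a.e.\ $v\in T^1_z\Hyp$'' I would invoke the invariance of $\widetilde{\sigma}^\pm$ under stable/unstable holonomy of the foliated geodesic flow: $\widetilde{\sigma}^\pm$ factors through a Borel map on $\partial\Hyp$, so the exceptional set, viewed in each fiber $T^1_z\Hyp\simeq\partial\Hyp$, is a Lebesgue-null subset of the boundary independent of the base point $z$.
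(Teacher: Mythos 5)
Your Steps 1--2 (the formula $\DD(c_{\tilde v}(t))=A_t(v)^{-1}\sigma^0(g_t(v))$, the reduction to North-South dynamics via Oseledets, and the identification of the integrability of $f(v)=\log\dist_{\C\PP^1}(\sigma^0(v),\sigma^+(v))$ as the technical heart) faithfully reproduce the paper's reduction (Section \ref{limitsgeodesicrays}, Formula \eqref{formuladeveloping}, Propositions \ref{northsouth} and \ref{integrability}). The gap is in how you propose to prove integrability. You invoke ``H\"older regularity of $\sigma^+$ along the stable and unstable foliations of $G_t$--a byproduct of the foliated hyperbolicity developed in \cite{BGM,Al4}.'' No such regularity is available: $\sigma^+$ is an Oseledets section and is only a \emph{measurable} object (Proposition \ref{lines}); foliated hyperbolicity in the sense of \cite{BGM} concerns the leafwise geodesic flow, not the projective cocycle, and furnishes no H\"older control on the Lyapunov splitting. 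In the absence of bunching or domination hypotheses, which are not present here, the Oseledets section has no modulus of continuity whatsoever. A transversality estimate of the kind you suggest cannot be extracted this way, and the claim ``once $f\in L^1$'' is therefore unjustified.

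The paper's actual route (Section \ref{Proofofintegrability}) is structurally different and does not require any regularity of $\sigma^+$. Lifting to $T^1\Hyp\simeq\Hyp\times\R\PP^1$ in the trivialisation of the center-unstable foliation, the invariance of $\sigma^+$ under center-unstable holonomy means $\widetilde\sigma^+(z,\xi)=(z,\xi,\widetilde s^+(\xi))$ depends on $\xi$ alone, while $\sigma^0$ lifts to $(z,\xi,\DD(z))$, which depends on $z$ alone. Fubini then reduces the problem to bounding, \emph{uniformly in the unknown point} $\widetilde s^+(\xi)\in\C\PP^1$, the $z$-integral of $\sup_{t\in[0,1]}|\log\dist_{\C\PP^1}(\widetilde s^+(\xi),\DD(G_t(z,\xi)))|$ against the Poisson kernel over a fundamental domain; this is accomplished by explicit hyperbolic geometry: a finite cover of the compact part by discs on which $\DD$ factors as $w\mapsto w^{n_j}$ near critical points (Lemma \ref{lowerbounddev}), the model inclusion $\iota$ in the cusps with Poisson-kernel estimates on horizontal slices (Proposition \ref{horizontal}), and the bilipschitz control of $\DD$ in the cusp furnished by parabolicity (Proposition \ref{distortiondeveloping}). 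Your final step (promoting Liouville-a.e.\ to every $z$ and $d\theta$-a.e.\ $v$) is also under-argued: the exceptional set need not be a product of $\Hyp$ with a null subset of $\partial\Hyp$, because $\DD(c_v(t))$ depends on the whole ray and not just its endpoint. The paper handles this by proving the global Lipschitz property of $\sigma^0$ (Proposition \ref{sectionlipschitz}, itself requiring parabolicity) and propagating the conclusion along stable manifolds via the conjugacy formula \eqref{conjugacyformula}; your sketch omits this ingredient, which is essential.
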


\paragraph{Remark.} It is interesting to note that the set of limits of typical developed geodesic rays described in the theorem above is independent of the choice of a developing map. It only depends on the holonomy of the underlying Riccati foliation, and is defined by looking a nonuniformly hyperbolic attractor of the foliated geodesic flow. A more detailled description of this set is given by the following theorem.

\paragraph{Distribution of the singularities.} Theorem \ref{analyticccontinuation} shows that although in hyperbolic geometry almost every Brownian path possesses a geodesic escort, there exists a qualitative difference between the geodesic flow and the Brownian motion which is due to the fluctuations of the latter. The remarkable fact is that at the ergodic level, we don't see the difference: almost every Brownian path spends almost all of its time close to the limit of its developed geodesic escort. More precisely, in \cite{H} it is proven under the hypothesis of Theorem \ref{analyticccontinuation} (see Theorem \ref{hussenot2}) that for almost every Brownian path $\omega$ on $\Hyp$, there exists $\e(\omega)$ such that:
$$\lim\limits_{t\to\infty}\frac{1}{t}\int_0^t\DD*\delta_{\omega(s)}ds=\delta_{\e(\omega)}.$$

We prove that these points $\e(\omega)$ and the limits of developed geodesic rays are distributed according to the same law. The main goal of Section \ref{distributionsingularities} is to study this distribution in detail. As a corollary of the propositions proven in that section we get the following result:

\begin{maintheorem}
\label{distriblimit}
Let $\Sigma$ be a hyperbolic surface of finite type. Consider a branched projective structure on $\Sigma$, represented by a development-holonomy pair $(\DD,\rho)$, that is parabolic and non-elementary. Let $(\Pi,M,\Sigma,\C\PP^1,\F)$ be the associated Riccati foliation.

Denote by $(s_p)_{p\in \Sigma}$ the family of limits of distributions of developed geodesic rays. Then this family coincides with:
\begin{itemize}
\item the family of conditional measures of the projection of the unique SRB measure (in the sense of \cite{BR,Si}) for the foliated geodesic flow via the canonical map $pr:T^1\F\to M$;
\item the family of conditional measures of the unique foliated harmonic measure (in the sense of \cite{Gar}) for $\F$;
\item the unique family of $\nu_p$-stationary measures where $(\nu_p)_{p\in\Sigma}$ is a family of probability measures on $\pi_1(\Sigma)$ obtained by the procedure of Furstenberg-Lyons-Sullivan's discretization of the Brownian motion;
\item the family of distributions of points $\e(\omega)$, $\omega$ Brownian path starting at $p$;
\item the family of harmonic measures of $\rho$.
\end{itemize}
\end{maintheorem}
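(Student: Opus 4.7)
The plan is to take the family $(m_p)$ of harmonic measures of $\rho$ in the sense of Deroin-Dujardin as a central character and show that each of the other five families coincides with it, thereby reducing the theorem to six identifications. All uniqueness statements will follow from non-elementarity of $\rho$ via Furstenberg's theorem, ergodicity of the foliated harmonic measure, or ergodicity of the SRB measure of $G_t$.

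The equivalences that stay inside the theory of the base are relatively soft. The family of distributions of the Brownian hitting points $\e(\omega)$ started at $p$ coincides with $m_p$: this is essentially Theorem \ref{hussenot2} integrated over the path space. The $\nu_p$-stationary measures produced by Furstenberg-Lyons-Sullivan discretization coincide with $m_p$ by construction of the discretization: the driving probability $\nu_p$ on $\pi_1(\Sigma)$ is designed so that bounded Laplace-harmonic functions on $\Hyp$ correspond bijectively to bounded $\nu_p$-harmonic functions on the group acting through $\rho$ on $\C\PP^1$, hence stationary measures on $\C\PP^1$ are exactly the Deroin-Dujardin harmonic measures, with uniqueness coming from Furstenberg's theorem applied to the non-elementary representation. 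Finally, the fiber conditionals of the unique Garnett-harmonic measure for $\F$ equal $m_p$: Garnett's characterization forces these conditionals to be harmonic for the leafwise Laplacian, and equivariance of $\DD$ transports this harmonicity to the $\rho$-equivariant harmonicity defining $m_p$.

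The two dynamical identifications rely on the foliated hyperbolicity of $G_t$ from \cite{BGM,Al4} and the positivity of the top Lyapunov exponent ensured by \cite{AV,LS,BGVil}. Pesin theory yields a unique SRB measure for $G_t$ whose conditionals along weak-unstable horospheres are absolutely continuous and carried by the graph of the expanding Lyapunov section. Projecting via $pr:T^1\F\to M$ produces a measure on $M$ whose fiber conditionals are supported on $\widetilde{\sigma}^-$ and absolutely continuous in the angular direction; this is exactly the pushforward of the angle measure $d\theta$ on $T^1_z\Hyp$ by $v\mapsto\widetilde{\sigma}^-(v)$, which by Theorem \ref{limitedprojective} is precisely $s_p$. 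The remaining equality between the $pr$-projection of the SRB and the Garnett-harmonic measure for $\F$ is the foliated Ledrappier correspondence in the style of \cite{BG,BGV,BGVil}: SRB measures of foliated geodesic flows with foliated hyperbolicity project to foliated harmonic measures, and uniqueness transfers along the projection.

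The main obstacle will be justifying the Pesin-theoretic picture of the SRB in the presence of the parabolic cusps. After the compactification of \cite{CDFG,DD} one works with a singular foliation and a foliated flow that is only nonuniformly hyperbolic, with cusps introducing further non-compactness near which $G_t$ must be controlled. One must check that the Pesin unstable manifolds coincide with weak-unstable horospheres, that their conditionals are genuinely Lebesgue in the angular direction and carried by $\sigma^-$, and that the SRB is unique (via transitivity of $G_t$ on the bundle, which again rests on non-elementarity). Once this geometric description of the SRB is in hand, the five identifications assemble into the theorem.
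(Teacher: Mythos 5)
Your plan is correct in substance and touches all the right ingredients, but it is organized differently from the paper and over-anticipates one obstacle. The paper takes $s_p = \sigma^-\ast d\theta_p$ (the geodesic-ray distribution) as the hub and links each of the five families to $s_p$ in turn: Proposition \ref{SRB} identifies $s_p$ with the fiber conditionals of the $pr$-projection $m$ of the SRB measure (by disintegrating $\mu^-=\sigma^-\ast\Liouv$ along the fibers of $T^1\Sigma\to\Sigma$, following Theorem F of \cite{Al3}); Propositions \ref{uniquenessharmonic} and \ref{projectionSRB} identify $m$ with the unique foliated harmonic measure via the invariance of $\mu^+$ under the geodesic and unstable horocyclic flows, as in \cite{Al2,Ma}; Proposition \ref{stationarymeasures} then passes to the $\nu_p$-stationary family via the \cite{Al1} bijection; Proposition \ref{asydistribbrownian} and the last proposition bring in the Brownian distributions $\e_p$ and the Deroin--Dujardin harmonic family $\theta_p$ using Theorem \ref{hussenot2} and the uniqueness in Proposition \ref{harmonicmeasures}. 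You instead take the Deroin--Dujardin family $m_p$ as the hub and split the identifications into a ``soft'' Brownian/stationary/Garnett block and a ``dynamical'' SRB block. Both routes are viable because all six families are provably equal, so the pivot is a matter of taste; your ordering has the mild advantage of postponing the dynamical input to a single block.

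Where you genuinely diverge from the paper, and where I would push back, is your last paragraph: the paper does \emph{not} need the compactification of \cite{CDFG,DD}, nor Pesin's stable/unstable manifold theory, to construct or describe the SRB measure. It works directly on the noncompact $T^1\Sigma$ using Oseledets' theorem (applicability in this parabolic, cuspidal setting is the integrability result of \cite{BGVil}), which produces the measurable Lyapunov section $\sigma^+$; the SRB measure is then defined \emph{explicitly} as $\mu^+=\sigma^+\ast\Liouv$, its uniqueness and basin property are in \cite{BGVil}, and the disintegration over fibers is a transcription of Theorem F of \cite{Al3}. No identification of Pesin unstable manifolds with horospheres or Lebesgue conditionals on them is ever invoked; the statement that $\sigma^+$ commutes with the unstable horocyclic flow (Proposition \ref{sections}) already carries that geometric content in an elementary form. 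Likewise, uniqueness of the SRB does not rest on topological transitivity of $G_t$ but on ergodicity of the Liouville measure for $g_t$ (Hopf) plus the Oseledets structure. So the ``main obstacle'' you flag is not an obstacle on the route the paper actually takes; if you insist on a Pesin-theoretic construction you would be adding difficulty for no gain. With that simplification, your plan assembles into a correct proof.
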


This theorem contains implicit statements, namely the uniqueness of the SRB measure and of the foliated harmonic measure. It is the occasion to review in a unified way previous results of \cite{Al1,Al2,DD,H,Ma}: we carefully explain the link between each of these measures in Section \ref{distributionsingularities}.

\paragraph{Organization of the paper.} In Section \ref{prelim}, we give the main definitions and results which will be used throughout this paper. In particular we give a discussion about Lyapunov exponents of the cocycle defined by the foliated geodesic flow in this noncompact setting. In Section \ref{distributionsingularities} we analyze the distribution of the limit of developed geodesic rays and prove Theorem \ref{distriblimit}. We also give a proof that this set of limit points is uncountable and dense in the limit set. In Section \ref{limitsgeodesicrays} we show how to deduce Theorem \ref{limitedprojective} from ergodic-theoretical facts as well as from an integrability result. Section \ref{Proofofintegrability} is the main technical section: we prove the aforementioned integrability result.

\paragraph{Notations.}
In all what follows, we will use the following notations:
\begin{itemize}
\item $\dist_{\C\PP^1}$ which stands for the Fubini-Study distance in $\C\PP^1$;
\item $\dist_{\Hyp}$ which stands for the hyperbolic distance in $\Hyp$;
\item $\dist_{\C}$ which stands for the euclidian distance in $\C$.
\end{itemize}

\section{Preliminaries}
\label{prelim}

\paragraph{Analytic continuation.} Let $X_0,X_1$ be two Riemann surfaces, and $f:(X_0,x_0)\to(X_1,x_1)$ be a germ of holomorphic map. We say that $f$ admits an \emph{analytic continuation along a path} $c:[0;1]\to X_0$ if there exists a chain of discs $D_0,...,D_n$ which cover $c$, as well as a sequence of holomorphic maps $f_k:D_k\to X_1$, such that the germ of $f_0$ at $x_0$ is given by $f$, and $f_k=f_{k+1}$ in restriction to $D_k\cap D_{k+1}$. The germ of $f_n$ at $c(1)$ is called the \emph{determination} of $f$ over $c(1)$ and depends only on the homotopy class of $c$ \emph{inside the holomorphy domain of the germ}.

\paragraph{Singularities.} We say that \emph{a path} $c:[0;1]\to X_0$ \emph{leads the germ} $f$ \emph{to a singularity} if $f$ can be extended analytically along each path $c_{|[0;1-\eps]}$, but not along $c$. The point $c(1)$ will be called \emph{singularity} of $f$.

\subsection{Hyperbolic surfaces of finite type and their projective structures}

\paragraph{Hyperbolic surfaces of finite type.} In the sequel, we shall consider hyperbolic Riemannian surfaces $\Sigma$ which are \emph{not compact and with finite area}: such a surface will be said to be \emph{of finite type}. By definition, they are uniformized by the upper half plane $\Hyp=\{z\in\C;\,\im(z)>0\}$ endowed with the \emph{Poincaré metric}:
$$ds^2=\frac{dx^2+dy^2}{y^2}.$$

A hyperbolic surface of finite type $\Sigma$ is biholomorphic to $\Sigma_g\moins \{p_1,...,p_k\}$ where $\Sigma_g$ is a compact Riemann surface of genus $g$. Neighbourhoods of the $p_i$ are called \emph{cusps}. 

It is well known that in this case the fundamental group of $\Sigma$ is a free group, and that there is a fundamental domain $P$ for a copy $\pi_1(\Sigma)\simeq\Gamma<PSL_2(\R)$, which is an ideal polygon with $2l$ vertices at infinity, where $l$ is the maximal number of mutually disjoint non-homotopic geodesics whose ends arrive to punctures.

Such a surface decomposes as  $\Sigma=K\sqcup \Int\,C_1\sqcup...\sqcup\Int\,C_k$, where $K$ is compact, and $C_i$ is a cusp around $p_i$ bounded by a horocycle $H_i$.

\paragraph{Branched projective structures.} A \emph{branched projective structure} on the surface $\Sigma$ is a system of branched projective charts $(D_i,U_i)_{i\in I}$ . It means that $(U_i)_{i\in I}$ is a locally finite cover of $\Sigma$ by open discs, and that the maps $D_i:U_i\to\C\PP^1$ are nonconstant holomorphic maps such that in the intersection of two domains $U_i\cap U_j$, the cocycle relation $D_j=\phi_{ij}\circ D_i$ holds for some Möbius transformation $\phi_{ij}$.

A branched projective structure is, up to projective automorphisms of $\C\PP^1$, determined by a \emph{development-holonomy} pair $(\DD,\rho)$, where $\DD:\Hyp\to\C\PP^1$ is a nonconstant holomorphic map called the \emph{developing map} which globalizes the branched projective charts, and $\rho:\pi_1(\Sigma)\to PSL_2(\C)$ is a morphism called the \emph{holonomy representation} which globalizes the transition functions. Moreover the following equivariance relation holds for every $\gamma\in\pi_1(\Sigma)$:
\begin{equation}
\label{equivariance}
\DD\circ\gamma=\rho(\gamma)\circ\DD.
\end{equation}

\paragraph{Parabolic structures.} Consider a branched projective structure on a hyperbolic surface $\Sigma$ of finite type characterized by a development-holonomy pair $(\DD,\rho)$. Denote by $pr:\Hyp\to\Sigma$ the universal cover of $\Sigma$.

Let $C_i$ be a cusp bounded by a horocycle $H_i$. In the sequel, $\widetilde{C}_i$ denotes a connected component of $pr^{-1}(C_i)$, which is invariant by a parabolic element $\gamma_i\in\pi_1(\Sigma)$ (this is the lift of the translation over the corresponding primitive closed horocycle $H_i$). 

We say that the structure is \emph{parabolic at $C_i$} if there exists $h_i:\widetilde{C}_i\to\C\PP^1$ and $A_i\in PSL_2(\C)$ such that:
\begin{enumerate}
\item $h_i$ is a biholomorphism onto its image;
\item $h_i$ conjugates the actions of $\gamma_i$ and $z\mapsto z+1$, and $A_i$ conjugates the actions of $z\mapsto z+1$ and $\rho(\gamma_i)$;
\item $\DD=A_i\circ h_i$ in restriction to $\widetilde{C}_i$.
\end{enumerate}

We say that the branched projective structure is \emph{parabolic} if it is parabolic at every cusp.

\paragraph{Remark 1.} Let $\widetilde{C}_i,\widetilde{C}_i'$ be two connected components of $pr^{-1}(C_i)$, corresponding to parabolic elements $\gamma_i,\gamma_i'\in \pi_1(\Sigma)$. There exists an element $\gamma\in \pi_1(\Sigma)$ such that $\gamma(\widetilde{C}_i)=\widetilde{C}_i'$ and which conjugates $\gamma_i$ and $\gamma_i'$. We then have $\DD_{|\widetilde{C}_i'}=\rho(\gamma)\circ\DD_{|\widetilde{C}_i}\circ\gamma^{-1}$. This shows that the definition of being parabolic at the cusp $C_i$ does not depend on the choice of a particular lift of $C_i$.

\paragraph{Remark 2.} For a branched projective structure to be parabolic, it is necessary that \emph{the holonomy representation is parabolic}: the image by $\rho$ of a parabolic element $\gamma$ has to be a parabolic matrix of $PSL_2(\C)$. If one prefers, the holonomy over any loop around the punctures has to be conjugated to a translation. As we will see later, this is not sufficient.

\paragraph{Remark 3.} Since $h_i$ conjugates the action of $\gamma_i$ and $z\mapsto z+1$, it sends $\widetilde{C}_i$ inside a half plane bounded by a horizontal line. Up to postcomposition by a Möbius commuting with $z\mapsto z+1$, we can always assume that $h_i(\widetilde{C}_i)\dans \Hyp_{\geq 1}=\lbrace z\in\C;\,\im(z)\geq 1\rbrace$.

\paragraph{Remark 4.} It is convenient to think that a local model for a branched projective structure which is parabolic at a cusp is given by the inclusion $\iota:D\to\C\PP^1$, $D\dans\Hyp_{\geq 1}$ being invariant by $z\mapsto z+1$. Indeed, a structure is parabolic at a cusp $C_i$ if and only if the developing map reads as $\iota$ after holomorphic change of coordinates in $\widetilde{C}_i$ which conjugates the actions of $z\mapsto z+1$ and $\gamma_i$, and a Möbius change of coordinate at the goal which conjugates the actions of $\rho(\gamma_i)$ and $z\mapsto z+1$.

\paragraph{Remark 5.} It is possible to think of the developing map as a multivalued holomorphic map over $\Sigma$. Then, the structure is parabolic at a cusp $C_i$ if in some holomorphic coordinate $z$ in $C_i$, one has $\DD(z)=\frac{1}{2\mathbf{i}\pi}\log(z)$. Non-parabolic structures in a punctured disc with parabolic holonomy representation are given for example by the multivalued holomorphic maps $h_n(z)=\frac{1}{2\mathbf{i}\pi}\log(z)+\frac{1}{z^n}$.\\

Basic examples of parabolic projective structures are given by \emph{uniformization} and more generally by the \emph{covering projective structures}, whose developing maps are covering maps onto their images (see \cite{DD} as well as the references therein). There are also exotic parabolic structures constructed by Hejhal \cite{He} by a surgery process called \emph{grafting} which produce parabolic structures which are not of covering type. Finally, (\cite{CDFG},Lemma 8) provides examples of parabolic projective structures on punctured spheres whose holonomy representations are dense in $PSL_2(\C)$. Using the Schwartzian parametrization of projective structures, it is possible to prove that the space of (non-branched) projective structures on a surface of genus $g$ with $n$ punctures has the structure of a complex affine space of dimension $3g-3+n$ (\cite{DD},Paragraph 6.1.).

\subsection{The foliated geodesic flow of Riccati foliations}
\label{foliatedflows}

\paragraph{Geodesic and horocyclic flows.} The \emph{geodesic flow} of $T^1\Hyp$ is defined by flowing a vector $v$ at unit speed along the geodesic it directs. We denote it by $\widetilde{G}_t$. It is well known that this flow has hyperbolic properties, and that the unstable (resp. stable) manifolds are given by \emph{horocycles} endowed with the outward (resp. inward) unit normal vector field (horocycles are horizontal lines and euclidian circles tangent to the boundary of $\Hyp$). These manifolds foliate $T^1\Hyp$, and the arc length parametrization of the unstable (resp. stable) horocycles gives the \emph{unstable} (resp. \emph{stable}) \emph{horocyclic flow}, denoted by $\widetilde{H}^u_t$ (resp. $\widetilde{H}^s_t$). The unstable and stable foliations are respectively denoted by $\widetilde{\W}^u$ and $\widetilde{\W}^s$, and their leaves, by $\widetilde{W}^u(v)$, $\widetilde{W}^s(v)$, $v\in T^1\Hyp$.

The saturated sets of unstable and stable horospheres by the geodesic flow are respectively called \emph{center-unstable and center-stable manifolds} and are denoted by $\widetilde{W}^{cu}(v)$, $\widetilde{W}^{cs}(v)$, $v\in T^1\Hyp$. They form two foliations of $T^1\Hyp$ by planes called the \emph{center-unstable and center-stable foliations} and denoted by $\widetilde{\W}^{cu}$ and $\widetilde{\W}^{cs}$.

Let $\Sigma$ be a hyperbolic surface of finite type. We can push these flows by the differential of the Riemannian universal cover $pr:\Hyp\to\Sigma$ (which is by definition a local isometry). This defines three flows on $T^1\Sigma$ denoted respectively by $g_t$, $h^u_t$ and $h^s_t$. The invariant foliations will be denoted by $\W^{\ast}$, $\ast=s,u,cs,cu$. All these flows preserve a canonical volume form: the Liouville measure $\Liouv$ (normalized in such a way that $\Liouv(T^1\Sigma)=1$). Moreover, the famous theorem of Hopf \cite{Ho} asserts that the geodesic flow $g_t:T^1\Sigma\to T^1\Sigma$ is ergodic with respect to the Liouville measure.

\paragraph{Riccati foliations.} Given a representation $\rho:\pi_1(\Sigma)\to PSL_2(\C)$, there is an associated \emph{Riccati foliation} obtained by suspension of the action on $\C\PP^1$.

More precisely, $\pi_1(\Sigma)$ acts diagonally on $\Hyp\times\C\PP^1$: the action on the first factor is by deck transformations, and that on the second one is by $\rho$. By taking the quotient, we obtain a manifold $M$, called the \emph{suspended manifold} endowed with:
\begin{itemize}
\item a fiber bundle $\Pi:M\to\Sigma$, whose fibers $F_p=\Pi^{-1}(p)$, $p\in\Sigma$ are copies of $\C\PP^1$;
\item a \emph{suspended foliation} $\F$ transverse to the fibers of $\Pi$, whose leaves are covers of $\Sigma$, and with holonomy representation $\rho$.
\end{itemize}

The data of $(\Pi,M,\Sigma,\C\PP^1,\F)$ will be called a \emph{Riccati foliation}. As was mentioned in the Introduction, these foliations can be defined by \emph{Riccati equations} on closed algebraic curves.

It is then possible to consider a Riemannian metric on $M$, that we will call \emph{admissible}, and which satisfies:
\begin{itemize}
\item for the induced metric on each leaf $L$, the restriction $\Pi_{|L}:L\to\Sigma$ is a Riemannian cover (in particular, all leaves are hyperbolic);
\item the induced metric on each fiber is compatible with its conformal structure, i.e. after a conformal change of coordinates it is the usual \emph{Fubini-Study metric} given by:
$$ds^2=\frac{dx^2+dy^2}{(1+x^2+y^2)^2}.$$
\item leaves and fibers are orthogonal.
\end{itemize}

If we have moreover a developing map $\DD$, we can define a holomorphic section $\sigma^0:\Sigma\to M$ of the bundle, called the \emph{diagonal section}, which is transverse to $\F$ except at a finite number of points, and which is induced by equivariance by  $(Id,\DD):\Hyp\to\Hyp\times\C\PP^1$.

\paragraph{Foliated flows.}  In the sequel we consider a Riccati foliation $(\Pi,M,\Sigma,\C\PP^1,\F)$ endowed with an admissible metric. Since in restriction to the leaves, the fibration is a local isometry, its differential induces a fiber bundle $\Pi_{\ast}:T^1\F\to T^1\Sigma$, where $T^1\F$ is the \emph{unit tangent bundle of the foliation}, i.e. the set of unit tangent vectors tangent to the leaves of $\F$. This fiber bundle is foliated by the $T^1L$, $L$ leaf of $\F$. We denote by $\hcF$ this foliation. The fiber $\Pi_{\ast}^{-1}(v)$ shall be denoted by $F_{\ast,v}$.

Since any leaf $L$ is uniformized by the Poincaré plane $\Hyp$, $T^1L$ carries a geodesic flow and two horocyclic flows. Hence we have three flows of $T^1\F$ which, when restricted to a leaf $T^1L$, coincide with its geodesic and horocyclic flows. We call them the \emph{foliated geodesic flow}, denoted by $G_t$, the \emph{foliated unstable horocyclic flow}, denoted by $H_t^u$, and the \emph{foliated stable horocyclic flow}, denoted by $H_t^s$

\paragraph{Projective cocycles.} Bonatti, G\'omez-Mont and Vila \cite{BGVil} remarked that these foliated flows produce \emph{locally constant projective cocycles}. Indeed, since all leaves are Riemannian covers of the base, the foliated geodesic flow projects down to the geodesic flow of $T^1\Sigma$. Hence, it sends fibers to fibers. We shall denote the resulting cocycle by:
$$A_t(v)=(G_t)_{|F_{\ast v}}:F_{\ast, v}\To F_{\ast, g_t(v)},$$
for $t\in\R$, and $v\in T^1\Sigma$. The term cocycle refers to the following formula:
$$A_{t_1+t_2}(v)=A_{t_1}(g_{t_2}(v))A_{t_2}(v).$$

If we choose any orbit segment $c=g_{[0;t]}(v)$, then $A_t(v)$ is the holonomy map along the path $c$. Hence, the foliated geodesic flow sends fibers to fibers as a projective transformation. The cocycle is locally constant because the bundle is flat: in particular, if two orbit paths $c=g_{[0;t]}(v)$ and $c'=g_{[0;t']}(v')$ are covered by a same chain of trivializing charts, then $A_t(v)$ and $A_{t'}(v')$ are equal as projective transformations of $\C\PP^1$.

In the same way, the foliated unstable and stable horocyclic flows produce cocycles:
$$B_t^u(v)=(H^u_t)_{|F_{\ast v}}:F_{\ast, v}\To F_{\ast, h^u_t(v)},$$
$$B_t^s(v)=(H^s_t)_{|F_{\ast v}}:F_{\ast, v}\To F_{\ast, h^s_t(v)},$$

\paragraph{Lyapunov exponents.} When the representation is parabolic, Bonatti, G\'omez-Mont and Vila \cite{BGVil} proved that Oseledets' theorem can be applied to the cocycle $A_t$ because the following integrability condition holds:
$$\int_{T^1\Sigma}\log^+||A_{\pm 1}(v)||\,d\Liouv(v)<\infty.$$
A consequence is the existence of \emph{Lyapunov exponents} for $\Liouv$-almost every $v\in T^1\Sigma$:
$$\lambda^+(v)=\lim_{t\to\infty}\frac{1}{t}\log||A_t(v)||,$$
$$\lambda^-(v)=\lim_{t\to\infty}\frac{1}{t}\log||A_t(v)^{-1}||^{-1}.$$

By ergodicity of Liouville measure these quantities are constant on a full and invariant set: we call $\lambda^+,\lambda^-$ these numbers. Remark that $\lambda^+=-\lambda^-$.

If $\Sigma$ were compact, the following theorem would be attributed to Bonatti, G\'omez-Mont and Viana \cite{BGV}. Since it is not compact, it is a consequence of the work of Avila-Viana, and of the coding of the geodesic flow.

More precisely, using the Bowen-Series coding of the action of the surface group \cite{BS}, Series was able to prove that the geodesic flow of $\Sigma$ is a \emph{sophic system} \cite{Se}. For our purpose, the modification of this coding by Ledrappier and Sarig \cite{LS} will be more adapted.
They provide a geometric \emph{Markov partition} with countably many symbols for the geodesic flow on $T^1\Sigma$ and locally Hölder height function. They also provide the symbolic description of the Liouville measure and prove that it has a consistent local product structure with uniformly log-bounded densities in the local stable and unstable sets (this is Lemma 3.1 of \cite{LS}).

In \cite{AV}, the authors give a sufficient condition for a locally constant projective cocycle over a Markov map with countably many symbols endowed with an ergodic probability measure with the local product structure to have a simple Lyapunov spectrum. A combination of these works gives the following:

\begin{theorem}
\label{positivitylyapexp}
Let $(\Pi,M,\Sigma,\C\PP^1,\F)$ be a Riccati foliation endowed with an admissible metric. Assume that the holonomy representation $\rho:\pi_1(\Sigma)\to PSL_2(\C)$ is parabolic. Then the following dichotomy holds true:
\begin{itemize}
\item either there exists a probability measure on $\C\PP^1$ invariant by the holonomy group $\rho(\pi_1(\Sigma))$;
\item or one has $\lambda^+(v)>0$ for Liouville-almost every $v\in T^1\Sigma$.
\end{itemize}
\end{theorem}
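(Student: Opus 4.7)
The plan is to transport the cocycle $A_t$ to a discrete symbolic setting and apply the Avila--Viana simplicity criterion for locally constant projective cocycles.

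First, I would use the Ledrappier--Sarig coding \cite{LS} to represent the geodesic flow $g_t$ on $T^1\Sigma$ as a suspension of a countable-alphabet Markov shift $\sigma:\Sigma_A\to\Sigma_A$ under a locally H\"older roof function $r$. Their Lemma 3.1 provides a $\sigma$-invariant ergodic probability measure $\mu$ on $\Sigma_A$ which encodes the Liouville measure and which has a local product structure with uniformly log-bounded densities on local stable and unstable sets. Because $\F$ is the suspension of $\rho$ and each Ledrappier--Sarig symbol corresponds to a homotopically definite piece of trajectory in $\Sigma$, the induced return cocycle $\mathcal{A}(\underline{x})=A_{r(\underline{x})}(v(\underline{x}))$ depends only on the zeroth coordinate $x_0$ of $\underline{x}$ and takes values in $\rho(\pi_1(\Sigma))\subset PSL_2(\C)$. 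In other words $\mathcal{A}$ is a genuine locally constant cocycle of the type considered in \cite{AV}.

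Next, I would verify the integrability $\int_{\Sigma_A}\log^+\|\mathcal{A}^{\pm 1}\|\,d\mu<\infty$. This follows from the continuous-time estimate of Bonatti--G\'omez-Mont--Vila \cite{BGVil} recalled above, combined with the locally H\"older character of $r$, the parabolicity hypothesis (which controls cusp excursions), and Kac's lemma relating the discrete and continuous-time integrals. With this in hand, the Avila--Viana theorem applied to the locally constant $SL_2(\C)$-valued cocycle $\mathcal{A}$ over $(\Sigma_A,\sigma,\mu)$ yields the following alternative: either the Lyapunov spectrum is simple, i.e.\ $\lambda^+>0$, or there exists a measurable $\mathcal{A}$-invariant field $\underline{x}\mapsto\nu_{\underline{x}}$ of probability measures on $\C\PP^1$ satisfying $\mathcal{A}(\underline{x})_{*}\nu_{\underline{x}}=\nu_{\sigma(\underline{x})}$ for $\mu$-a.e.\ $\underline{x}$.

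To conclude, I would argue that such an invariant measurable field must produce a single probability measure on $\C\PP^1$ invariant under the whole group $\rho(\pi_1(\Sigma))$, giving the first alternative in the theorem. Since $\mathcal{A}(\underline{x})$ depends only on $x_0$ and since arbitrary words of symbols realise arbitrary elements of $\rho(\pi_1(\Sigma))$ on suitable cylinders, a Furstenberg-type argument using the local product structure together with ergodicity of $\mu$ forces the field $\underline{x}\mapsto\nu_{\underline{x}}$ to be $\mu$-a.e.\ constant, yielding the desired globally invariant measure. I expect the main technical obstacle to be twofold: (i) transferring the integrability and the local product structure cleanly through the Ledrappier--Sarig coding in the non-compact parabolic setting, where cusp excursions may produce unbounded return times and cocycle norms; and (ii) making rigorous the passage from a measurable invariant field on $\Sigma_A$ to a globally $\rho$-invariant probability measure on $\C\PP^1$, which requires careful use of ergodicity against the locally constant structure of $\mathcal{A}$.
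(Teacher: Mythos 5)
Your proposal follows essentially the same route the paper takes: Ledrappier--Sarig coding of the geodesic flow as a countable-alphabet Markov shift with locally H\"older roof, local product structure of Liouville (their Lemma 3.1), and the Avila--Viana simplicity criterion for locally constant projective cocycles, with the BGVil integrability transferred via Kac. The paper presents this only as a one-paragraph sketch citing the same ingredients, and the technical points you flag (integrability through the coding in the presence of cusps, and the passage from a measurable invariant field to a globally $\rho$-invariant measure via the locally constant structure and local product structure) are exactly the steps left implicit there.
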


\paragraph{The Lyapunov sections.} Assume that there is no holonomy invariant measure and that the holonomy representation is parabolic. Then, by Theorem \ref{positivitylyapexp}, and Oseledets' theorem, we have that:

\begin{proposition}
\label{lines}
For Liouville almost every $v\in T^1\Sigma$ there exists a splitting of the linear fiber $\widetilde{F}_{\ast,v}=\sigma^+(v)\oplus\sigma^-(v)$ such that:
\begin{enumerate}
\item it varies measurably with the point $v$;
\item it commutes with the cocycle: for every $t\in\R$, $A_t(v)\sigma^{\pm}(v)=\sigma^{\pm}(g_t(v))$;
\item we have the following property of attraction:
$$\lim_{t\to\infty}\frac{1}{t}\log\,\dist_{F_{\ast,v}}(A_t(v)w,\sigma^+(g_t(v)))=-2\lambda^+\,\,\,\,\,for\,\,all\,\,w\in F_{\ast,v}\moins\{\sigma^-(v)\},$$
$$\lim_{t\to\infty}\frac{1}{t}\log\,\dist_{F_{\ast,v}}(A_{-t}(v)w,\sigma^-(g_{-t}(v)))=-2\lambda^+\,\,\,\,\,for\,\,all\,\,w\in F_{\ast,v}\moins\{\sigma^+(v)\}.$$
\item the sections are determined by the following properties:
$$\lim_{t\to\infty} ||A_{-t}(v)x||=0\,\,\,\,if\,\,and\,\,only\,\,if\,\,x\in \sigma^+(v),$$
$$\lim_{t\to\infty} ||A_{t}(v)x||=0\,\,\,\,if\,\,and\,\,only\,\,if\,\,x\in \sigma^-(v).$$
\end{enumerate}
\end{proposition}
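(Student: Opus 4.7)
The plan is to obtain Proposition \ref{lines} as a direct application of Oseledets' multiplicative ergodic theorem to the projective cocycle $A_t$ over $(T^1\Sigma,g_t,\Liouv)$, combined with the positivity of the top Lyapunov exponent already provided by Theorem \ref{positivitylyapexp}. The two hypotheses needed to invoke Oseledets are present: the integrability $\int\log^+\|A_{\pm 1}\|\,d\Liouv<\infty$ is the one checked in \cite{BGVil} (used to define $\lambda^\pm$ in the first place), and ergodicity of $(g_t,\Liouv)$ is Hopf's theorem. Under the assumption that no probability measure on $\C\PP^1$ is $\rho(\pi_1(\Sigma))$-invariant, Theorem \ref{positivitylyapexp} gives $\lambda^+>0$, so the Lyapunov spectrum of $A_t$ is simple: $\lambda^+>0>\lambda^-=-\lambda^+$ (the equality $\lambda^-=-\lambda^+$ comes from $\det A_t\equiv 1$ since the cocycle takes values in $PSL_2(\C)$, lifted to $SL_2(\C)$).

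Oseledets' theorem then furnishes, on a $g_t$-invariant Borel set of full Liouville measure, a measurable splitting $\widetilde{F}_{\ast,v}=\sigma^+(v)\oplus\sigma^-(v)$ of the two-dimensional linear fiber into the fast and slow Oseledets subspaces, each of complex dimension one. This yields assertion (1) (measurability) and assertion (2) (equivariance under the cocycle) immediately, and the linear asymptotic characterization
$$\lim_{t\to\pm\infty}\frac{1}{t}\log\|A_t(v)x\|=\lambda^{\mathrm{sgn}}\quad\text{for }x\in\sigma^{\mathrm{sgn}}(v)\setminus\{0\}$$
gives assertion (4) at once: a nonzero vector $x$ has $\|A_{-t}(v)x\|\to 0$ as $t\to+\infty$ exactly when it lies in the fast-forward direction $\sigma^+(v)$ (since then its backward exponent is $-\lambda^+<0$), and symmetrically for $\sigma^-$.

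For assertion (3) I would pass from the linear picture to the projective one. Writing $w=w^++w^-$ with $w^{\pm}\in\sigma^{\pm}(v)$ and $w^-\neq 0$ (i.e.\ $[w]\neq\sigma^-(v)$), one has $\|A_t(v)w^+\|\sim e^{\lambda^+ t}\|w^+\|$ and $\|A_t(v)w^-\|\sim e^{-\lambda^+ t}\|w^-\|$ up to subexponential factors provided by Oseledets. In any local affine chart of $\C\PP^1$ centred at $\sigma^+(g_t(v))$ the Fubini-Study distance between $[A_t(v)w]$ and $[\sigma^+(g_t(v))]$ is comparable to the ratio $\|A_t(v)w^-\|/\|A_t(v)w^+\|$, which is of order $e^{-2\lambda^+ t}$. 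Taking logarithm and dividing by $t$ yields the announced rate $-2\lambda^+$. The backward statement is identical with the roles of $\sigma^+$ and $\sigma^-$ exchanged.

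The step that requires a little care, and which I regard as the only real content beyond quoting Oseledets, is the bookkeeping of the subexponential Oseledets error terms when translating the linear rates into the projective rate $-2\lambda^+$: one must check that the angle between the moving frame $\{\sigma^+(g_t(v)),\sigma^-(g_t(v))\}$ and any fixed chart grows at most subexponentially, so that the comparability between Fubini-Study distance and the norm ratio does not swallow the exponential rate. This follows from the Oseledets regularity of the splitting $v\mapsto(\sigma^+(v),\sigma^-(v))$ applied along the trajectory $g_t(v)$, using Poincar\'e recurrence to ensure that the measurable frame returns infinitely often to a set on which the angle is uniformly bounded below.
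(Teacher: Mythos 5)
Your proposal is correct and takes the same route as the paper, which in fact offers no argument at all beyond the sentence "by Theorem \ref{positivitylyapexp}, and Oseledets' theorem, we have that..." — i.e.\ the authors treat the statement as a direct restatement of the Oseledets splitting (plus the sign $\lambda^-=-\lambda^+$ from $SL_2$ and positivity from Theorem \ref{positivitylyapexp}). Your write-up fills in exactly the standard details; the only remark worth adding is that the subexponential control on the angle between $\sigma^+(g_t v)$ and $\sigma^-(g_t v)$ that you flag as the "only real content" is itself part of the usual formulation of Oseledets' theorem (regularity of the splitting), so the Poincar\'e-recurrence detour, while valid, is not needed.
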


\paragraph{Remark.} 

In the third assertion, $\sigma^{\pm}(v)$ are thought as elements of the projective fiber $F_{\ast,v}$. In the last assertion, we see $A_{\pm t}(v)$ as an element of $SL_2(\C)$ acting on a copy of $\C^2$: recall that the bundle is supposed to be linearizable.\\

The two subspaces defined in the proposition above can be thought as elements of the projective fiber $F_{\ast,v}$. Therefore we have two measurable sections $\sigma^{\pm}$ of the bundle $\Pi_{\star}$ which are called \emph{Lyapunov sections}.

The following proposition is due to Bonatti and G\'omez-Mont \cite{BG}: it relies on the fourth assertion stated in Proposition \ref{lines}.

\begin{proposition}
\label{sections}
\begin{enumerate}
\item The two Lyapunov sections commute with the geodesic flows: $G_t\circ\sigma^{\pm}=\sigma^{\pm}\circ g_t$.
\item The section $\sigma^+$ commutes with the unstable horocyclic flows: $H_t^u\circ\sigma^{+}=\sigma^{+}\circ h_t^u$.
\item The section $\sigma^-$ commutes with the stable horocyclic flows: $H_t^s\circ\sigma^{-}=\sigma^{-}\circ h_t^s$.
\end{enumerate}
\end{proposition}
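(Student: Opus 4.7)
The plan is as follows. For part (1), observe that the restriction of $G_t$ to the fiber $F_{\ast,v}$ is by definition the cocycle element $A_t(v)$, so assertion (2) of Proposition \ref{lines}, which states $A_t(v)\sigma^\pm(v) = \sigma^\pm(g_t(v))$, immediately gives $G_t(\sigma^\pm(v)) = \sigma^\pm(g_t(v))$. There is nothing more to do.

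For parts (2) and (3) the approach is to test the putative image $B^u_s(v)\sigma^+(v)$ (respectively $B^s_s(v)\sigma^-(v)$) against the characterization in assertion (4), using a cocycle identity obtained from the flatness of the Riccati bundle. I detail part (2); part (3) is entirely symmetric, with the stable horocyclic flow replacing the unstable one and forward time replacing backward time. Fix $v$ in the full-Liouville-measure set where Proposition \ref{lines} applies and set $v' = h^u_s(v)$. The classical $PSL_2(\R)$ identity $g_{-t}\circ h^u_s = h^u_{e^{-t}s}\circ g_{-t}$ shows that the two orbit segments in $T^1\Sigma$ from $v$ to $g_{-t}(v')$---first $h^u_s$ then $g_{-t}$, or first $g_{-t}$ then $h^u_{e^{-t}s}$---project to paths in $\Sigma$ that share the same endpoints and are homotopic (they lift to paths in the simply connected $\Hyp$). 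Flatness of the bundle then yields
\[
A_{-t}(v') \circ B^u_s(v) \;=\; B^u_{e^{-t}s}(g_{-t}(v)) \circ A_{-t}(v).
\]

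Applied to $x \in \sigma^+(v)$ and setting $y = B^u_s(v)x$, this reads $A_{-t}(v')y = B^u_{e^{-t}s}(g_{-t}(v))\cdot A_{-t}(v)x$. By assertion (4), $\|A_{-t}(v)x\|$ tends to $0$ at exponential rate $-\lambda^+$, so to conclude $\|A_{-t}(v')y\|\to 0$---which by the characterization exactly means $y \in \sigma^+(v')$, finishing part (2)---it remains to bound $\|B^u_{e^{-t}s}(g_{-t}(v))\|$ subexponentially in $t$. Part (3) follows by the symmetric identity $g_t\circ h^s_s = h^s_{e^{-t}s}\circ g_t$ combined with the dual characterization of $\sigma^-$ via forward-time contraction.

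This last estimate is the main obstacle, and it is where the parabolicity of the branched projective structure enters crucially. In each cusp the parabolic local model shows that the foliated unstable horocyclic cocycle for time $\tau$ is conjugate to a translation by $\tau$ in the corresponding cuspidal coordinate on $\C\PP^1$; consequently $B^u_\tau(u)$ stays uniformly $O(1)$ in norm as $\tau\to 0$, independently of the basepoint $u$, even when $u$ penetrates deep into a cusp. Together with a routine compactness estimate on the thick part of $T^1\Sigma$, this yields a uniform bound $\|B^u_\tau(u)\| \le C$ for all $|\tau|\le 1$ and all $u \in T^1\Sigma$, which is ample to absorb the exponentially decaying factor and complete the argument. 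Without the parabolic hypothesis the horocyclic cocycle could blow up along cuspidal excursions of $g_{-t}(v)$, and the clean product estimate above would break down.
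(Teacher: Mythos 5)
Your overall plan is the right one: part (1) is immediate from assertion (2) of Proposition \ref{lines}, and for parts (2) and (3) the cocycle identity
\[
A_{-t}(v')\circ B^u_s(v)=B^u_{e^{-t}s}(g_{-t}(v))\circ A_{-t}(v),
\]
obtained from the flatness of the bundle and the $PSL_2(\R)$ commutation relation, combined with the characterization in assertion (4) of Proposition \ref{lines}, is exactly the argument that the paper's citation of Bonatti--G\'omez-Mont is pointing at. You also correctly identify the only genuinely new difficulty in this non-compact setting: one must control $\|B^u_{e^{-t}s}(g_{-t}(v))\|$ as $t\to\infty$.

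That said, the bound you assert, namely that ``$\|B^u_\tau(u)\|\le C$ for all $|\tau|\le 1$ and all $u\in T^1\Sigma$,'' is \emph{false}, and the claim that the horocyclic cocycle ``for time $\tau$ is conjugate to a translation by $\tau$ in the cuspidal coordinate'' is also incorrect. In the half-plane model $\Hyp_{\geq 1}/(z\mapsto z+1)$ for a cusp, a horocyclic arc of hyperbolic length $\tau$ starting at height $y$ has euclidean length of order $\tau y$, so it winds roughly $\tau y$ times around the cusp. In the parabolic coordinate the holonomy is therefore a translation by about $\tau y$, not by $\tau$, and since $\rho(\gamma_i)$ is parabolic the operator norm grows linearly in the winding number. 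For any fixed $\tau>0$ this is unbounded as $u$ goes deeper into the cusp, so no uniform bound over all $u$ holds (nor even over $u$ at large depth as $\tau\to 0$ independently of $u$).

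What actually saves the argument is the precise coupling between the shrinking time $\tau_t=e^{-t}s$ and the basepoint $g_{-t}(v)$. Because the geodesic flow moves at unit speed, the cuspidal depth of $g_{-t}(v)$ can exceed that of $v$ by at most $t$, so its height satisfies $y(t)\leq e^{d_0}e^{t}$ with $d_0$ depending only on $v$. The winding of the arc of length $e^{-t}s$ is then $\lesssim e^{-t}s\cdot e^{d_0}e^{t}=e^{d_0}s$, uniformly bounded in $t$; combined with compactness of the thick part this gives a $t$-uniform bound on $\|B^u_{e^{-t}s}(g_{-t}(v))\|$ for fixed $v$ and $s$. That is stronger than the subexponential growth you need, and with this correction your argument closes (part (3) being symmetric). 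So the approach is sound, but the crucial estimate must be phrased as a bound on the specific family $B^u_{e^{-t}s}(g_{-t}(v))$, not as a uniform bound on the horocyclic cocycle over all short times and all basepoints.
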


\section{Distribution of the singularities}
\label{distributionsingularities}

The purpose of this section is to prove Theorem \ref{distriblimit} and to give several description of the distribution of limit points of developed geodesic rays in the Riemann sphere. This also gives the statistical distribution of the singularities of holonomy germs from a fiber to the image of the holomorphic section $\sigma^0$ along almost every developed ray. 

The results of this section are consequences of Theorem \ref{limitedprojective}, which we assume to hold for the moment. In the sequel $\Sigma$ will stand for a hyperbolic surface of finite type, and $(\DD,\rho)$ for a parabolic branched projective structure. We assume that the holonomy group $\rho(\pi_1(\Sigma))$ has no invariant probability measure on $\C\PP^1$. Under these hypothesis, Theorem \ref{limitedprojective} implies that for every $p\in\Sigma$ and $d\theta$-almost every $v\in T^1_p\Sigma$ we have (with an obvious abusive notation):
$$\lim_{t\to\infty}\DD(g_t(v))=\sigma^-(v).$$

\subsection{Disintegration of the SRB measure of the foliated geodesic flow}

\paragraph{Distribution of the singularities.}
Unit vectors tangent to $\Sigma$ are distributed uniformly according to the Liouville measure. Denote by $(d\theta_p)_{p\in\Sigma}$ the family conditional measures of the Liouville measure on the unit tangent fibers $T^1_p\Sigma$ with respect to the area element of $\Sigma$. Hence the limits of developed geodesic rays in the fiber of a point $p$ are distributed according to:
\begin{equation}
\label{distributionlimitpoints}
s_p=\sigma^-\ast(d\theta_p).
\end{equation}

\paragraph{Remark.} The probability measures $s_p$, $p\in\Sigma$ are quasi-invariant by holonomy maps of the foliation $\F$: this is another way to say that there is a well defined \emph{measure class} on $\C\PP^1$ which describes the distribution of limits of developed geodesic rays. This is so because the measure class of $d\theta_p$, as well as the section $\sigma^-$ are invariant by center-stable holonomies (which are smooth since the stable horocyclic flow is smooth).

\paragraph{The SRB measure.} \cite{BGVil} proved that the foliated geodesic flow $G_t$ possesses a unique \emph{SRB measure}: it possesses a probability measure $\mu^+$ whose basin of attraction has full volume. It follows from the third property stated in Proposition \ref{lines} that this measure is precisely described by:
$$\mu^+=\sigma^+\ast\Liouv.$$

The flow $G_{-t}$ also possesses a unique SRB measure which is precisely described by:
$$\mu^-=\sigma^-\ast\Liouv.$$

Notice that even if these measure are singular (since the sections $\sigma^+$ and $\sigma^-$ are disjoint almost everywhere) their projections via the canonical map $pr:T^1\F\to M$ are equal: we denote it by $m$. Indeed $\mu^-$ is exactly the image of $\mu^+$ by the involution $v\in T^1\F\mapsto -v$.

\paragraph{Disintegration in the fibers.} Here we show that the conditional measures $m_p$ of $m$ on the fibers coincide with the distribution $s_p$ defined by Formula \ref{distributionlimitpoints}. The proof follows the lines of that of Theorem F of \cite{Al3} where a similar result is stated in the context of Gibbs measures of the foliated geodesic flow (although this theorem is stated for a compact base its proof can be copied without modification). The main idea is that the conditional measures of $\mu^-$ in the fibers $F_{\ast,v}$ are given by the Dirac masses at $\sigma^-(v)$ and that we obtain conditional measures of $m$ in a fiber $F_p$ by integration of those of $\mu^-$ on fibers of unit vectors tangent to $p$.

\begin{proposition}
\label{SRB}
Let $m$ be the projection of the unique SRB measure of the geodesic flow and $(m_p)_{p\in\Sigma}$ be its system of conditional measures on the fibers. Then for every $p\in\Sigma$,
$$m_p=s_p.$$
\end{proposition}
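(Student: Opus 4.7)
The plan is to compute the conditional measures $(m_p)_{p\in\Sigma}$ of $m=pr_\ast\mu^-$ directly from the description $\mu^-=\sigma^-_\ast\Liouv$ and to identify them with $(s_p)_{p\in\Sigma}$ by uniqueness of disintegration along $\Pi:M\to\Sigma$.

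First, I would record the commutative square relating the canonical projections $pr:T^1\F\to M$ and $pr_\Sigma:T^1\Sigma\to\Sigma$ to the bundle projection $\Pi$ and its derivative $\Pi_\ast$, namely $\Pi\circ pr=pr_\Sigma\circ\Pi_\ast$. Since $\sigma^-(v)$ lies in the fiber $F_{\ast,v}$ above $v\in T^1\Sigma$, this diagram forces $pr(\sigma^-(v))\in F_p$ whenever $v\in T^1_p\Sigma$. In particular $s_p=(pr\circ\sigma^-)_\ast(d\theta_p)$ is unambiguously a probability measure on $F_p$, which is the implicit identification made in Formula \ref{distributionlimitpoints}.

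Next, I would disintegrate $\Liouv$ along $pr_\Sigma$: by the very definition of $(d\theta_p)_{p\in\Sigma}$, for any bounded measurable $\phi:M\to\R$,
$$\int_M\phi\,dm=\int_{T^1\Sigma}\phi(pr(\sigma^-(v)))\,d\Liouv(v)=\int_\Sigma\left(\int_{T^1_p\Sigma}\phi(pr(\sigma^-(v)))\,d\theta_p(v)\right)d\mathrm{area}(p),$$
and the inner integral is exactly $\int_{F_p}\phi\,ds_p$. On the other hand $\int_M\phi\,dm=\int_\Sigma\int_{F_p}\phi\,dm_p\,d(\Pi_\ast m)(p)$, and $\Pi_\ast m=pr_{\Sigma,\ast}\Liouv=d\mathrm{area}$ because $\sigma^-$ is a section of $\Pi_\ast$ (so $\Pi_{\ast,\ast}\mu^-=\Liouv$). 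Uniqueness of disintegration then yields $m_p=s_p$ for $\mathrm{area}$-almost every $p\in\Sigma$.

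The argument is essentially bookkeeping once the SRB description $\mu^-=\sigma^-_\ast\Liouv$ is at hand, and follows the strategy of Theorem F of \cite{Al3} referenced in the excerpt. There is no genuine obstacle; the only care needed is measurability of $v\mapsto pr(\sigma^-(v))$, which is inherited from the measurability of the Lyapunov section in Proposition \ref{lines}. The real content was imported beforehand: positivity of Lyapunov exponents (Theorem \ref{positivitylyapexp}) and Oseledets' theorem provide the sections $\sigma^\pm$ that make the disintegration of $\mu^-$ along $\Pi_\ast$ reduce to Dirac masses on each fiber, which is precisely what makes the identification $m_p=s_p$ immediate.
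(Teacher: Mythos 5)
Your proof is correct and takes essentially the same route as the paper, which only sketches the argument (the conditional measures of $\mu^-=\sigma^-_\ast\Liouv$ on the fibers $F_{\ast,v}$ are Dirac masses at $\sigma^-(v)$, and one integrates these over $T^1_p\Sigma$ against $d\theta_p$) and defers the details to Theorem F of \cite{Al3}; your Fubini computation together with uniqueness of the disintegration of $m$ along $\Pi$ is the bookkeeping version of exactly that idea. The only minor caveat is that this argument, like the one referenced, formally gives $m_p=s_p$ for $\mathrm{area}$-almost every $p$ rather than literally every $p$; the stronger-sounding statement is recovered because both $(s_p)_p$ and $(m_p)_p$ can be chosen quasi-invariant under holonomy (as remarked just before the proposition), so the a.e.\ identity propagates to all $p$.
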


\subsection{Foliated harmonic measure and its discretization}

\paragraph{The unique foliated harmonic measure.} Each leaf $L$ is endowed with a Laplace operator $\Delta_L$ which generates a one-parameter semi-group called the \emph{heat diffusion}, characterized by a \emph{heat kernel} $p(t;x,y)$. This allows for every $x\in L$ to define the \emph{Wiener probability measure} on the space $\Omega_x$ of continuous paths $\omega:[0;\infty)\to L$ starting at $x$, that will be denoted by $W_x$. It has the Markov property, and projects down to the heat density $p(t;x,y)dy$ by the map $\omega\mapsto\omega(t)$. A \emph{Brownian path} starting at $x$ is a typical path for $W_x$.

\emph{Foliated harmonic measures} for $\F$ are measures on $M$ which are invariant by the \emph{leafwise heat diffusion operator} (which by definition induces on every leaf $L$ its heat diffusion operator). They have been considered by Garnett \cite{Gar} in the context of \emph{compact} foliated manifolds. In our context the existence of such measure is guaranteed by the Main Theorem of \cite{Al1}.

\begin{proposition}
\label{uniquenessharmonic}
Under the hypothesis of Theorem \ref{limitedprojective}, there exists only one foliated harmonic measure.
\end{proposition}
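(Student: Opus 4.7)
The plan is to reduce the uniqueness of the foliated harmonic measure on $M$ to the uniqueness of the $\rho$-equivariant harmonic family of probability measures on $\C\PP^1$, a result established by Deroin and Dujardin \cite{DD} precisely under the non-elementary hypothesis. Since existence of a probability foliated harmonic measure in this non-compact setting is already provided by \cite{Al1}, the real content is an injectivity statement: from each foliated harmonic probability measure, I will extract a $\rho$-equivariant harmonic family on the sphere, and different foliated harmonic measures will give different families.

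First I would lift a foliated harmonic probability measure $\mu$ on $M$ to a measure $\widetilde{\mu}$ on the universal cover $\Hyp\times\C\PP^1$ which is invariant under the diagonal action of $\pi_1(\Sigma)$ (by deck transformations on the first factor, by $\rho$ on the second). The lifted foliation has horizontal leaves $\Hyp\times\{x\}$ endowed with the Poincar\'e metric, and Garnett's local decomposition, adapted to the Riccati setting where the vertical fibers serve as smooth transversals, expresses $\widetilde{\mu}$ locally as $\int h_x(z)\,d\mathrm{vol}_\Hyp(z)\,d\eta(x)$, with $h_x$ positive harmonic on $\Hyp$ for $\eta$-almost every $x$. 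After normalizing by a total-mass function I define, for each $z\in\Hyp$, a probability measure $m^\mu_z$ on $\C\PP^1$.

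Second, I would verify that the family $(m^\mu_z)_{z\in\Hyp}$ is $\rho$-equivariant, this being a direct consequence of the $\pi_1(\Sigma)$-invariance of $\widetilde{\mu}$, and harmonic in the sense that $z\mapsto m^\mu_z(A)$ is harmonic on $\Hyp$ for every Borel $A\subset\C\PP^1$, this flowing from the harmonicity of the densities $h_x$. The map $\mu\mapsto(m^\mu_z)$ is injective, because $\mu$ can be recovered from $(m^\mu_z)$ by integrating against the volume on the base in a trivializing chart. By \cite{DD}, under the non-elementary hypothesis there exists exactly one $\rho$-equivariant harmonic family of probability measures on $\C\PP^1$, namely the family $(m_z)_{z\in\Hyp}$ recalled in the introduction. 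Injectivity then forces $\mu$ to be uniquely determined.

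The main technical obstacle lies in the non-compactness of $\Sigma$: Garnett's decomposition and the bijection between foliated harmonic measures and equivariant harmonic families are classical for compact foliated manifolds, whereas here one must control the behaviour of $\widetilde{\mu}$ near the lifts of the cusps in order to extract a \emph{probability} normalization of $m_z^\mu$ and to guarantee that the normalizing factor is harmonic. This is where parabolicity plays its role: near each cusp the holonomy is conjugated to a translation, and the concrete local model for $\DD$ recalled in the preliminaries allows one to compare $\widetilde{\mu}$ in $\widetilde{C}_i\times\C\PP^1$ with a standard product model and thereby to show that the leafwise mass stays finite. An alternative but closely related route would be to implement the Furstenberg--Lyons--Sullivan discretization of the Brownian motion on $\Hyp$, identify foliated harmonic probability measures with $\nu_p$-stationary probability measures on $\C\PP^1$, and then invoke Furstenberg's classical uniqueness of the stationary measure for non-elementary actions; this is the point of view that feeds naturally into Theorem \ref{distriblimit}.
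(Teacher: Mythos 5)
Your primary route is genuinely different from the paper's: you propose to extract from a foliated harmonic probability measure $\mu$ a $\rho$-equivariant harmonic family $(m^{\mu}_z)_{z\in\Hyp}$ via Garnett's local decomposition and then invoke the uniqueness of such a family (Proposition~\ref{harmonicmeasures}, due to Deroin--Dujardin). The paper instead reduces to \emph{stationary} measures for a probability on the holonomy group, via the bijection of \cite{Al1}, and then invokes Furstenberg's uniqueness theorem \cite{Fu1}, with the integrability hypotheses that Furstenberg's theorem requires verified in \cite{Al4}. Your ``alternative route'' at the end of the proposal is in fact precisely the paper's argument, and you correctly identify it as the one that feeds Theorem~\ref{distriblimit}. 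The trade-off is clear: the Deroin--Dujardin route bypasses the discretization machinery and the need to check Furstenberg's moment condition, but it pushes the burden onto re-establishing, in the non-compact finite-area setting, the correspondence between foliated harmonic measures on $M$ and equivariant harmonic families on $\C\PP^1$, whereas the paper outsources that correspondence to \cite{Al1}.

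One point you gloss over deserves to be made explicit, as it is where the normalization step could plausibly break. After Garnett's decomposition one sets $T(z)=\int_{\C\PP^1}h_x(z)\,d\eta(x)$ and $m^{\mu}_z(A)=T(z)^{-1}\int_A h_x(z)\,d\eta(x)$; harmonicity of $z\mapsto m^{\mu}_z(A)$ requires $T$ to be constant, since the numerator is harmonic but $1/T$ generally is not. This does hold here, but only because $T$ is a $\pi_1(\Sigma)$-invariant positive harmonic function on $\Hyp$ and hence descends to a positive harmonic function on the finite-area surface $\Sigma$, which must be constant by recurrence of the hyperbolic Brownian motion. That recurrence is exactly the feature of the finite-area non-compact setting that makes the argument survive; without it, the extraction of a \emph{harmonic probability} family from $\mu$ would fail, and your injectivity claim would be vacuous. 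With this point supplied, your argument is complete and does furnish an alternative proof of the proposition.
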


\begin{proof}
This can be deduced from the Main Theorem of \cite{Al1} (which gives a bijective correspondence between harmonic and stationary measure for a probability measure on the holonomy group that we shall describe below), from Furstenberg's theorem (ensuring the uniqueness of stationary measures in the present context under some integrability conditions \cite{Fu1}) and from Section 3.4 of the first author's PhD thesis \cite{Al4} (which shows the integrability conditions under the hypothesis of Theorem \ref{limitedprojective}).
\end{proof}

\begin{proposition}
\label{projectionSRB}
Under the hypothesis of Theorem \ref{limitedprojective}, the projection $m$ of the unique SRB measure of the foliated geodesic flow is the unique foliated harmonic measure for $\F$.

In particular the family $(s_p)_{p\in\Sigma}$ defined by Formula \ref{distributionlimitpoints} is the family of conditional measures of the unique foliated harmonic measure for $\F$.
\end{proposition}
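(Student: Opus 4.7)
The second statement is immediate from the first combined with Proposition \ref{SRB}, so the work is to identify $m = pr_{\ast}\mu^{-}$ with the unique foliated harmonic measure; by Proposition \ref{uniquenessharmonic}, it suffices to prove that $m$ is harmonic.

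The plan is to apply the Bakhtin--Martinez correspondence in the form developed for our noncompact setting in \cite{Al1,Al4}: a Borel probability measure on $M$ is a foliated harmonic measure for $\F$ as soon as it is the $pr$-projection of some probability measure on $T^1\F$ that is invariant both under the foliated geodesic flow $G_t$ and under the foliated stable horocyclic flow $H_t^s$. The reason is that leafwise, the hyperbolic Laplacian $\Delta_{\F}$ can be written as a fixed linear combination of derivatives along the geodesic and stable horocyclic directions, so joint $(G,H^s)$-invariance upstairs forces $\Delta_{\F}$-invariance downstairs.

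It remains to check that $\mu^{-} = \sigma^{-}\ast\Liouv$ is jointly $G_t$- and $H_t^s$-invariant. The first and third items of Proposition \ref{sections} give the intertwinings $G_t\circ \sigma^{-} = \sigma^{-}\circ g_t$ and $H_t^s\circ \sigma^{-} = \sigma^{-}\circ h_t^s$. Since Liouville is preserved by both $g_t$ and $h_t^s$ on $T^1\Sigma$, pushing forward by $\sigma^{-}$ and using these commutation relations gives $(G_t)_{\ast}\mu^{-} = \mu^{-}$ and $(H_t^s)_{\ast}\mu^{-} = \mu^{-}$. Projecting by $pr$ then exhibits $m$ as a foliated harmonic measure, and Proposition \ref{uniquenessharmonic} identifies it as \emph{the} foliated harmonic measure.

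The main subtle point is the extension of the Bakhtin--Martinez correspondence beyond the compact setting, which requires controlling the foliated heat kernel and the excursions of Brownian motion into the cusps. Under the parabolicity and non-elementarity hypotheses of Theorem \ref{limitedprojective} these estimates are precisely what is established in \cite{Al1,Al4}, so the correspondence is valid and the argument above is complete.
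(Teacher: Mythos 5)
Your proof is essentially the same as the paper's, differing only by time reversal: the paper establishes joint $G_t$- and $H^u_t$-invariance of $\mu^+=\sigma^+_\ast\Liouv$ and cites \cite{Al2,Ma} for the correspondence with harmonic measures, while you establish the symmetric joint $G_t$- and $H^s_t$-invariance of $\mu^-=\sigma^-_\ast\Liouv$, which projects to the same measure $m$. One caveat: the claim that the leafwise Laplacian is ``a fixed linear combination of derivatives along the geodesic and stable horocyclic directions'' is not literally correct (the Laplacian is second order, and the actual Bakhtin--Mart\'inez/Garnett mechanism relating jointly invariant lifts to harmonic measures is more delicate), but since you correctly delegate that step to the literature rather than relying on this heuristic, the argument still goes through.
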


\begin{proof}
The measure $\mu^+$ is invariant by the joint action of the foliated geodesic and unstable horocyclic flow (see Proposition \ref{lines}). Hence its projection is a harmonic measure for $\F$: see the proofs of \cite{Al2,Ma} made in the compact case but which are still valid in our context.
\end{proof}

\paragraph{Discretization.} Given a probability measure $\nu$ on the fundamental group $\pi_1(\Sigma)$ we say that a measure $s$ on the Riemann sphere $\C\PP^1$ is $\nu$-\emph{stationary} if:
$$s=\sum_{\gamma\in\pi_1(\Sigma)}\nu(\gamma)\rho(\gamma)\ast s.$$

The \emph{discretization of the Brownian motion} performed by Furstenberg-Lyons-Sullivan \cite{Fu2,LySu} provides a bijective correspondence between foliated harmonic measures and stationary measures for the action of the holonomy group on the fiber (see \cite{Al1}).

In our context it yields a family $(\nu_z)_{z\in\Hyp}$ of probability measures on $\pi_1(\Sigma)$ with full support and equivariance property $\gamma\ast\nu_z=\nu_{\gamma z}$ (hence it defines a family $(\nu_p)_{p\in\Sigma}$ on $\pi_1(\Sigma)$) such that the conditional measure of the unique harmonic measure on the fiber $F_p\simeq\C\PP^1$ is precisely the unique $\nu_p$-stationary measure (see the proof of Proposition \ref{uniquenessharmonic}). This provides another characterization of the distribution of limit points of images of most geodesic rays by the developing map:

\begin{proposition}
\label{stationarymeasures}
Assume that the hypothesis of Theorem \ref{limitedprojective} hold. Let $(\nu_p)_{p\in\Sigma}$ be a family of measures given by the Furstenberg-Lyons-Sullivan procedure of discretization of the Brownian motion. Then for every $p\in\Sigma$ $s_p$ coincides with the unique $\nu_p$-stationary measure on the fiber $F_p$.
\end{proposition}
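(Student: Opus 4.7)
The plan is to assemble the statement from the three preceding results of the section together with the Furstenberg-Lyons-Sullivan correspondence; no new dynamical input is required.

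First, I would recall that by Proposition \ref{SRB}, the family $(s_p)_{p\in\Sigma}$ of distributions of limits of developed geodesic rays coincides with the family of conditional measures $(m_p)_{p\in\Sigma}$ of the projection $m$ of the unique SRB measure on the fibers. Then, by Proposition \ref{projectionSRB}, the measure $m$ is the unique foliated harmonic measure for $\F$. So the problem reduces to identifying the fiberwise conditionals of the unique foliated harmonic measure with the unique $\nu_p$-stationary measures.

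Second, I would invoke the Furstenberg-Lyons-Sullivan discretization as formulated in \cite{Al1}: given the family $(\nu_p)_{p\in\Sigma}$ produced by the procedure, there is a bijective correspondence between foliated harmonic measures on $M$ and $\pi_1(\Sigma)$-equivariant families of $\nu_p$-stationary measures on the fibers $F_p\simeq\C\PP^1$. Under this correspondence, a harmonic measure is sent to its fiberwise conditionals. Applied to $m$, this produces a family of $\nu_p$-stationary measures which must be exactly $(m_p)_{p\in\Sigma}$.

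Third, I would close the argument with a uniqueness statement: by Furstenberg's theorem \cite{Fu1}, applied in the context of the parabolic, non-elementary representation $\rho$ (the required integrability having been checked in \cite{Al4}, as already used in the proof of Proposition \ref{uniquenessharmonic}), the $\nu_p$-stationary measure on the fiber $F_p$ is unique. Combining these three steps yields $s_p=m_p=$ the unique $\nu_p$-stationary measure, which is the conclusion.

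The main (and only) point requiring care is to verify that the FLS correspondence matches conditionals on fibers with stationary measures for the discrete walk driven by $\nu_p$; but this is precisely the content of the bijection established in \cite{Al1} and already used to prove Proposition \ref{uniquenessharmonic}, so nothing new has to be proved here.
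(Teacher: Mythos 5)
Your proposal is correct and matches the paper's own reasoning: the paper does not give a separate proof block for this proposition but rather builds it into the surrounding prose, precisely via the chain $s_p=m_p$ (Propositions \ref{SRB} and \ref{projectionSRB}), the Furstenberg--Lyons--Sullivan bijection between foliated harmonic measures and equivariant families of $\nu_p$-stationary measures from \cite{Al1}, and Furstenberg's uniqueness theorem as already invoked in the proof of Proposition \ref{uniquenessharmonic}. One small simplification you could make: the second sentence of Proposition \ref{projectionSRB} already states directly that $(s_p)$ is the family of conditionals of the unique foliated harmonic measure, so you may cite that conclusion without routing through Proposition \ref{SRB} separately.
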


\subsection{Family of harmonic measures on the Riemann sphere}

\paragraph{Limits of developed Brownian paths.} In \cite{H} the second author adopts the point of view of Brownian motion and studies images by the developing map of Brownian paths (that we also call \emph{developed Brownian paths}). The main Theorem of \cite{H} is:

\begin{theorem}
\label{hussenot1}
Let $\Sigma$ be a hyperbolic surface of finite type. Consider a branched projective structure on $\Sigma$ represented by a development-holonomy pair $(\DD,\rho)$, that is parabolic and non-elementary. Then the following dichotomy holds:
\begin{itemize}
\item if $\DD$ is not onto, then for all $z\in\Hyp$ and almost every Brownian path $\omega$ starting at $z$, there exists $\e(\omega)\in\C\PP^1$ such that $\DD(\omega(t))$ converges to $\e(\omega)$ when $t$ goes to $\infty$;
\item if $\DD$ is onto,  then for all $z\in\Hyp$ and almost every Brownian path $\omega$ starting at $z$, the path $\DD(\omega(t))$ does not have any limit when $t$ goes to $\infty$.
\end{itemize}
\end{theorem}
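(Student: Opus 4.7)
The strategy is to view $X_t := \DD(\omega(t))$ as a process in $\C\PP^1$ and to read off its long-time behavior from the conformal geometry of the developing map. Since $\omega$ converges almost surely to a random boundary point $\omega_\infty \in \partial\Hyp$, the question is whether $\DD$ admits a spherical limit value at $\omega_\infty$, and the answer will depend drastically on whether $\DD$ is surjective.

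\textbf{Case of $\DD$ not onto.} The image $\Omega := \DD(\Hyp)$ is open and $\rho$-invariant by the relation $\DD\circ\gamma=\rho(\gamma)\circ\DD$. Its complement is therefore a nonempty closed $\rho$-invariant set, and since $\Lambda_{\rho}$ is the unique minimal closed $\rho$-invariant set it must contain $\Lambda_{\rho}$; in the non-elementary parabolic case $\Lambda_{\rho}$ is uncountable (a finite minimal set would yield an invariant probability measure by averaging). Thus $\DD$ is meromorphic on $\Hyp$ and omits at least three values, so by Montel's theorem it is a \emph{normal} meromorphic function in the sense of Lehto--Virtanen; their classical theorem says that such a function admits spherical angular limits at Lebesgue almost every point of $\partial\Hyp$. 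A Doob-type transfer from angular to Brownian convergence then produces $\e(\omega) := \lim_{t\to\infty}\DD(\omega(t)) \in \C\PP^1$ almost surely.

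\textbf{Case of $\DD$ onto.} Here I would invoke the conformal invariance of Brownian motion: $X_t$ is locally a time-changed Brownian motion on $(\C\PP^1, g_{\mathrm{FS}})$, with stochastic clock
\[
\tau(t) \;=\; \int_0^t \bigl|\DD'(\omega(s))\bigr|^2_{g_{\Hyp}\to g_{\mathrm{FS}}} \, ds,
\]
and surjectivity ensures that the resulting time-changed process $\tilde B_{\tau(t)}$ is a genuine Brownian motion on all of $\C\PP^1$. By $\rho$-equivariance the integrand descends to a non-negative function $\phi$ on $\Sigma$ whose integral against the hyperbolic area equals the pullback Fubini--Study area of $\DD$; parabolicity makes this integral finite, since the cusp model $\DD(z)\sim\log z/(2\mathbf{i}\pi)$ gives an integrable density near each cusp. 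Ergodicity of Brownian motion on the finite-volume surface $\Sigma$ combined with Birkhoff then gives $\tau(t)/t \to \int_\Sigma \phi\, d\mathrm{vol}/\mathrm{vol}(\Sigma) > 0$, so $\tau(\infty) = \infty$ almost surely. Since Brownian motion on the compact sphere $\C\PP^1$ is recurrent, $\tilde B_{\tau(t)}$ is almost surely dense in $\C\PP^1$ and fails to have a limit, yielding the second alternative.

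\textbf{Main obstacle.} The most delicate point is the Doob-type transfer from angular to Brownian convergence used in the non-onto case. For bounded holomorphic functions on $\D$ this is classical (the real and imaginary parts are bounded harmonic and their Brownian limits coincide with their angular limits by martingale convergence), but for a spherically-valued normal meromorphic function $X_t$ is a continuous martingale in the compact manifold $(\C\PP^1, g_{\mathrm{FS}})$ with \emph{infinite} quadratic variation, by the same ergodic calculation as above, so the usual martingale convergence theorem does not apply. The natural workaround is to lift $\DD$ to the universal cover $\widetilde\Omega \cong \Hyp$ of the hyperbolic domain $\Omega$, obtaining a bounded holomorphic map $\widetilde\DD:\Hyp\to\Hyp$ for which Doob gives a.s.\ convergence in $\overline{\Hyp}$, and then to descend through the covering $\pi:\widetilde\Omega\to\Omega$; one must check that the set of boundary directions where $\pi$ has no spherical $\C\PP^1$-limit is Lebesgue negligible, itself a normal-function statement applied to $\pi$, whose image omits $\Lambda_{\rho}$.
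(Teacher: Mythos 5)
The paper does not actually prove this statement; it cites it as the main theorem of the second author's paper \cite{H} and uses it as a black box (e.g.\ in the proof of Proposition \ref{asydistribbrownian}). So there is no ``paper's own proof'' to compare against, and your proposal must be judged on its own terms. On those terms it contains two genuine gaps, one in each case of the dichotomy.

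\textbf{Gap in the non-onto case.} The step ``normal meromorphic functions admit spherical angular limits at Lebesgue-a.e.\ boundary point'' is false, and the counterexample is precisely the kind of map under discussion here. Take the elliptic modular function $\lambda:\Hyp\to\C\setminus\{0,1\}$. It is a $\Gamma(2)$-automorphic covering map, hence normal, and it is not onto. But for Lebesgue-a.e.\ $\xi\in\R\PP^1$ the geodesic ray to $\xi$ projects to an equidistributing geodesic on the modular surface, so $\lambda$ restricted to that ray is dense in $\C\setminus\{0,1\}$ and has no radial limit. Lehto--Virtanen says that \emph{if} a normal function has an asymptotic value along some curve ending at $\zeta$, \emph{then} it has an angular limit at $\zeta$ equal to that value; it does not assert existence of asymptotic values at a.e.\ point. (The correct a.e.\ statement is a Plessner-type dichotomy: at a.e.\ $\zeta$, either the angular limit exists or the angular cluster set is all of $\C\PP^1$.) Your workaround at the end---lift through the universal cover of $\Omega$ and then descend through the covering $\pi:\Hyp\to\Omega$---has exactly the same defect: $\pi$ is a normal covering of a hyperbolic domain, and the modular function shows that such a $\pi$ can fail to have angular limits at a.e.\ boundary point. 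The modular function escapes the theorem only because its holonomy is trivial (hence elementary); this shows the non-elementary hypothesis must enter the proof in an essential way, whereas in your argument it is used only to make the omitted set large enough for Montel, which is not enough.

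\textbf{Gap in the onto case.} The function $\phi(z)=\bigl|\DD'(z)\bigr|^2_{g_{\Hyp}\to g_{\mathrm{FS}}}$ does \emph{not} descend to $\Sigma$. Differentiating the equivariance $\DD\circ\gamma=\rho(\gamma)\circ\DD$ gives
\begin{equation*}
\phi(\gamma z)=\bigl|\rho(\gamma)'(\DD(z))\bigr|^2_{g_{\mathrm{FS}}}\,\phi(z),
\end{equation*}
and the factor $\bigl|\rho(\gamma)'\bigr|_{g_{\mathrm{FS}}}$ is $\equiv 1$ only when $\rho(\gamma)\in PSU(2)$, i.e.\ when $\rho(\gamma)$ is a Fubini--Study isometry. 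For a non-elementary representation this fails for most $\gamma$, so $\phi$ is not $\Gamma$-periodic, the Birkhoff ergodic theorem does not apply, and you cannot conclude $\tau(t)/t\to\fint_\Sigma\phi>0$ nor $\tau(\infty)=\infty$. (Indeed, if such a descended $\phi$ existed it would give a conformal metric on $\C\PP^1$ invariant under the holonomy group, which is incompatible with non-elementarity.) Thus the time-change clock analysis, which is the heart of your onto case, is not justified.

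\textbf{What a correct argument must use.} Both gaps point in the same direction: the non-elementary hypothesis has to be used dynamically, not merely topologically. The natural mechanism, consistent with the rest of the paper and with \cite{DD,H}, is the positivity of the Lyapunov exponent of the projective cocycle and the resulting North--South contraction. Write $\omega(t)=\gamma_t\cdot\tilde\omega(t)$ with $\tilde\omega(t)$ in a fundamental domain, so that $\DD(\omega(t))=\rho(\gamma_t)\,\DD(\tilde\omega(t))$. The sequence $\rho(\gamma_t)$ is (after the Furstenberg--Lyons--Sullivan discretization) a random product in $PSL_2(\C)$ with positive Lyapunov exponent, hence contracts $\C\PP^1$ onto the Furstenberg boundary point $\e(\omega)\in\Lambda_\rho$, \emph{provided} the argument $\DD(\tilde\omega(t))$ stays away from the random repelling direction, which a.s.\ lies in $\Lambda_\rho$. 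This is exactly where the dichotomy enters: if $\DD$ is not onto, the open set $\DD(\Hyp)$ is $\rho$-invariant, so its complement contains $\Lambda_\rho$ and $\DD(\tilde\omega(t))$ is uniformly separated from the repelling direction, giving convergence; if $\DD$ is onto, $\DD(\tilde\omega(t))$ recurrently approaches $\Lambda_\rho$ and the repelling direction, and this recurrence defeats the contraction. Making this precise (in particular controlling the visits to the cusp, which is where parabolicity is used) is the real content of the theorem.
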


\paragraph{Asymptotic behaviour of developed Brownian paths.} Although in the second case of Theorem \ref{hussenot1} a developed Brownian path does not have a limit we can describe its asymptotic behaviour. \emph{Almost every developed Brownian path spends most of its time very close to some point of $\C\PP^1$, and the distribution of these points is exactly given by the distribution of limits of developed geodesic rays}.

\begin{theorem}
\label{hussenot2}
Let $\Sigma$ be a hyperbolic surface of finite type. Consider a branched projective structure on $\Sigma$ represented by a development-holonomy pair $(\DD,\rho)$, that is parabolic and non-elementary. Then for every $z\in\Hyp$ and almost every Brownian path $\omega$ starting from $z$, there exists a point $\e(\omega)\in \C\PP^1$ such that:
$$\lim\limits_{t\to\infty}\frac{1}{t}\int_0^t\DD*\delta_{\omega(s)}ds=\delta_{\e(\omega)}$$
\end{theorem}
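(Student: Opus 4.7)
If $\DD$ is not surjective, the first alternative of Theorem \ref{hussenot1} gives almost-sure pointwise convergence $\DD(\omega(t))\to \e(\omega)$; since $\C\PP^1$ is compact, dominated convergence immediately yields $\frac{1}{T}\int_0^T \DD*\delta_{\omega(s)}\, ds \to \delta_{\e(\omega)}$ in the weak-$*$ topology. From now on I focus on the harder case in which $\DD$ is onto and $\DD(\omega(t))$ has no pointwise limit.

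\textbf{Step 2 (discretization and Furstenberg contraction).} My plan is to invoke the Furstenberg--Lyons--Sullivan discretization of the Brownian motion already used in Proposition \ref{stationarymeasures}. This produces stopping times $\tau_0<\tau_1<\cdots$ and elements $\gamma_n\in\pi_1(\Sigma)$ such that, in the universal cover, $\omega(\tau_n)$ stays within bounded hyperbolic distance of $\gamma_n\cdot z$ for the starting point $z\in\Hyp$; moreover $(\gamma_n)$ is a right random walk driven by the measure $\nu$ on $\pi_1(\Sigma)$. Under the parabolic and non-elementary hypotheses, the integrability results invoked in the proof of Proposition \ref{uniquenessharmonic} let me apply Furstenberg's theorem to the random walk $(\rho(\gamma_n))$ in $PSL_2(\C)$: almost surely there exists a contracting direction $\e(\omega)\in\C\PP^1$ such that $\rho(\gamma_n)\cdot x\to \e(\omega)$ for every $x$ outside a random exceptional point. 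Equivariance of $\DD$ then gives $\DD(\omega(\tau_n))\to\e(\omega)$ almost surely along the skeleton.

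\textbf{Step 3 (from the skeleton to continuous time).} To upgrade convergence along the skeleton to a full Ces\`aro statement, I would write each piece of the path as $\omega(s)=\gamma_n\, \widetilde\omega_n(s)$ for $s\in[\tau_n,\tau_{n+1}]$, so that by equivariance $\DD(\omega(s))=\rho(\gamma_n)\,\DD(\widetilde\omega_n(s))$. For a \emph{typical} inter-discretization excursion, the lift $\widetilde\omega_n$ remains in a controlled compact piece of $\Hyp$ avoiding the exceptional point of Furstenberg's cocycle, so the projective contraction by $\rho(\gamma_n)$ forces $\DD(\omega(s))$ into any prescribed neighborhood of $\e(\omega)$ for all $n$ large enough. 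The theorem then reduces to proving that the total Lebesgue measure of \emph{non-typical} (``bad'') excursion intervals inside $[0,T]$ is $o(T)$ almost surely, after which weak-$*$ convergence to $\delta_{\e(\omega)}$ follows by testing against a dense countable family of continuous functions on $\C\PP^1$.

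\textbf{The main obstacle.} The delicate point is the quantitative control of Brownian excursions into the cusps of $\Sigma$, where the developing map behaves as $z\mapsto \frac{1}{2\mathbf{i}\pi}\log z$: a trip reaching horocyclic height $h$ produces a projective displacement of size $\log h$ towards the parabolic fixed point of the cusp holonomy, which is \emph{not} $\e(\omega)$ in general. I expect to use the fact that the height variable of Brownian motion in a cusp obeys a one-dimensional diffusion with confining drift, giving the expected-occupation estimate $\mathrm{Prob}[\,\text{height at time } s>h\,]=O(1/h)$. Combined with a Borel--Cantelli argument along a slowly growing cutoff $h_T\to\infty$ and the ergodicity of Brownian motion on the finite-area surface $\Sigma$, this should yield the required density-zero bound on bad intervals. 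A parallel (and easier) estimate must rule out the exceptional point of Furstenberg's theorem being repeatedly visited by $\DD(\widetilde\omega_n)$. Carrying out this cusp analysis, performed already in \cite{H}, is the technical heart of the argument.
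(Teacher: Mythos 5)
Note first that this paper does not prove Theorem \ref{hussenot2}: it is imported verbatim from the second author's earlier article \cite{H}, and all the paper records is the additional information (used in Proposition \ref{asydistribbrownian}) that the distribution $\e_p$ of the point $\e(\omega)$ coincides with the unique $\nu_p$-stationary measure, where $\nu_p$ comes from the Furstenberg--Lyons--Sullivan discretization. Your proposal is therefore a blind reconstruction of the argument in \cite{H}, and its overall architecture --- discretize the Brownian motion, appeal to Furstenberg's contraction for the induced random walk on $\pi_1(\Sigma)$, then upgrade to a continuous-time Ces\`aro statement via a density-zero estimate on ``bad'' excursion intervals --- is consistent with the way the paper describes the source and with the identification $\e_p=$ ($\nu_p$-stationary measure). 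Step 1 (the non-surjective case) is fine, and your final paragraph correctly locates the technical weight in the cusp-excursion control.

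There is, however, a gap in Step 2 that you should be more careful about. You assert that equivariance plus Furstenberg's theorem gives pointwise convergence $\DD(\omega(\tau_n))\to\e(\omega)$ along the skeleton. But the FLS construction only puts $\omega(\tau_n)$ within a bounded hyperbolic distance of $\gamma_n z_0$, so by equivariance $\DD(\omega(\tau_n))=\rho(\gamma_n)\DD(\gamma_n^{-1}\omega(\tau_n))$ where $\gamma_n^{-1}\omega(\tau_n)$ ranges over a fixed compact ball $B$. Now $\DD(B)$ is a fixed compact set of $\C\PP^1$ containing an open set when $\DD$ is onto, so nothing prevents the repelling (exceptional) direction of $\rho(\gamma_n)$ from lying in $\DD(B)$ infinitely often. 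This is not a ``parallel and easier estimate'' to be brushed aside: it is precisely the mechanism responsible for the second alternative in Theorem \ref{hussenot1}, namely that $\DD(\omega(t))$ has no pointwise limit when $\DD$ is onto. A fortiori, even the discretized sequence need not converge pointwise, and the correct conclusion of Step 2 can only be a density-one statement (the skeleton spends asymptotic density one near $\e(\omega)$). Your Step 3 does end up formulating the theorem in the right Ces\`aro language, but the passage from Step 2 to Step 3 should be restated without the false pointwise claim. Finally, as you acknowledge, none of the quantitative estimates --- the occupation-time bound for the height process in the cusp, the Borel--Cantelli cutoff $h_T$, the exceptional-set avoidance along the skeleton --- is actually proved here, so the proposal is a plausible roadmap rather than a proof; the work is in \cite{H}.
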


Denote by $\e_z$ the distribution of $\e(\omega)$ subject to the condition $\omega(0)=z$: by equivariance of $\DD$ this distribution satisfies the equivariance relation $\gamma\ast\e_z=\e_{\gamma z}$ for every $\gamma\in\pi_1(\Sigma)$. In particular it induces a family of probability measures on the fibers $F_p$ $(\e_p)_{p\in\Sigma}$. The proof of Theorem \ref{hussenot2} in \cite{H} provides more information about $\e_p$: it coincides exactly with the unique $\nu_p$-stationary measure (recall that $\nu_p$ is given by the discretization of the Brownian motion). As a consequence, we find that:

\begin{proposition}
\label{asydistribbrownian}
Assume that the hypothesis of Theorem \ref{limitedprojective} hold. Let $(\e_p)_{p\in\Sigma}$ be the family describing the asymptotic behaviour of developed Brownian path defined in Theorem \ref{hussenot2}. Then for every $p\in\Sigma$, the measure $s_p$ coincide with $\e_p$.
\end{proposition}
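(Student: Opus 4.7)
The plan is to identify both families $(s_p)_{p\in\Sigma}$ and $(\e_p)_{p\in\Sigma}$ as the same object in a third space, namely as the unique family of $\nu_p$-stationary measures on $\C\PP^1$ coming from the Furstenberg--Lyons--Sullivan discretization of the Brownian motion. Once both are shown to satisfy this characterization, the equality $s_p=\e_p$ is immediate from the uniqueness of the stationary measure in our setting.

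First I would record that the identification $s_p = \nu_p\text{-stationary measure}$ is precisely the content of Proposition \ref{stationarymeasures}, which in turn rests on Proposition \ref{projectionSRB} (which writes the unique foliated harmonic measure as the projection of the SRB measure, with conditional measures on fibers equal to $s_p$) and on the correspondence \cite{Al1} between foliated harmonic measures and $\nu_p$-stationary measures on the fibers. The uniqueness here comes from Furstenberg's theorem applied to the holonomy action, which requires the integrability conditions established in Section 3.4 of \cite{Al4} together with the non-elementarity hypothesis: the same tools were invoked in the proof of Proposition \ref{uniquenessharmonic}.

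Second, I would invoke the detailed analysis of \cite{H} (explicitly flagged in the paragraph preceding the proposition): the proof of Theorem \ref{hussenot2} in that paper shows that $\e_p$, the distribution of the points $\e(\omega)$ to which developed Brownian paths Ces\`aro-concentrate, is itself $\nu_p$-stationary. The family $(\e_p)$ is manifestly $\rho$-equivariant by the equivariance of $\DD$ and by $\gamma\ast\e_z=\e_{\gamma z}$, so it descends to a family of probability measures on the fibers of $\Pi$; stationarity follows from the Markov property of the discretized walk and the fact that $\e(\omega)$ depends only on the tail of the Brownian path.

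Combining these two identifications with the uniqueness of the $\nu_p$-stationary measure then yields $s_p=\e_p$ on each fiber $F_p$, which is exactly the assertion of the proposition. The only genuine work is already contained in the previously established results, so the main (and essentially only) task in writing the proof is to check that the integrability hypotheses needed for the Furstenberg uniqueness theorem in the noncompact, parabolic setting are satisfied; these are supplied by \cite{Al4} and have already been used in the proof of Proposition \ref{uniquenessharmonic}, so no new technical obstacle arises.
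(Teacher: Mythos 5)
Your proposal is correct and follows essentially the same route as the paper: the paper states immediately before the proposition that the proof of Theorem \ref{hussenot2} in \cite{H} identifies $\e_p$ with the unique $\nu_p$-stationary measure, and Proposition \ref{stationarymeasures} identifies $s_p$ with the same object, so the equality follows by uniqueness. Your additional remarks on where the uniqueness and integrability inputs come from accurately reflect the supporting results (Propositions \ref{uniquenessharmonic}, \ref{projectionSRB}, \ref{stationarymeasures} and \cite{Al1,Al4}) and do not change the argument.
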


\paragraph{Family of harmonic measures.} It is classical that by considering the exit distribution of the Brownian motion on an open set, one obtains a family of measures, \emph{the harmonic measures}, on its boundary which is used to solve the Dirichlet problem of finding harmonic functions with prescribed boundary conditions.

It is also possible to associate to any non-elementary representation $\rho:\pi_1(\Sigma)\to PSL_2(\C)$ a family of harmonic measures. Namely it has been shown by Furstenberg \cite{Fu3} that for such a representation, there exists a unique (up to a null Borel set for the Lebesgue measure) measurable map $\beta:\R\PP^1\to\C\PP^1$ which is $\rho$-equivariant (here the action of $\pi_1(\Sigma)$ on $\R\PP^1$ is the natural extension of that on $\Hyp$ which is given by the uniformization). This map is called the \emph{Furstenberg's boundary map}.

By pushing by the Furstenberg's boundary map $\beta$ the measure on $\R$ whose density with respect to Lebesgue is given by the Poisson kernel, Deroin and Dujardin proved the following:

\begin{proposition}
\label{harmonicmeasures}
Let $\Sigma$ be a hyperbolic surface of finite type and $\rho:\pi_1(\Sigma)\to PSL_2(\C)$ be a representation which preserves no measure on $\C\PP^1$. Then there exists a unique family of probability measures $(\theta_z)_{z\in\Hyp}$ which verifies the following properties:
\begin{enumerate}
\item it is equivariant: for every $z\in\Hyp$ and $\gamma\in\pi_1(\Sigma)$ we have $\rho(\gamma)\ast\theta_z=\theta_{\gamma z}$;
\item it is harmonic: for every Borel set $A\dans\C\PP^1$ the map $z\mapsto \theta_z(A)$ is harmonic for the Laplace operator.
\end{enumerate}
\end{proposition}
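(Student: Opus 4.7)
The plan is to establish existence by an explicit Poisson-type construction, and uniqueness by a boundary-values argument that reduces everything to Furstenberg's uniqueness of the map $\beta$.

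For existence, I would simply define
$$\theta_z = \beta_{\ast}\bigl(P(z,\cdot)\,d\xi\bigr),$$
where $P(z,\xi)$ is the Poisson kernel of $\Hyp$ and $d\xi$ is Lebesgue measure on $\R\PP^1$. Equivariance is automatic: since $\beta$ is $\rho$-equivariant and the measure $P(z,\cdot)d\xi$ is nothing but the hitting distribution at $\infty$ of a Brownian motion started at $z$, it transforms correctly under deck transformations, i.e. $\gamma_{\ast}(P(z,\cdot)d\xi)=P(\gamma z,\cdot)d\xi$; composing with $\beta$ and using $\rho(\gamma)\circ\beta=\beta\circ\gamma$ gives $\rho(\gamma)_{\ast}\theta_z=\theta_{\gamma z}$. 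Harmonicity of $z\mapsto\theta_z(A)$ is immediate from the classical fact that the Poisson integral of any bounded Borel function on $\R\PP^1$ is harmonic, applied to the characteristic function of $\beta^{-1}(A)$.

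For uniqueness, suppose $(\theta_z')_{z\in\Hyp}$ is another family satisfying the two properties. Testing against a continuous $\varphi:\C\PP^1\to\R$, the function $u_{\varphi}(z)=\int\varphi\,d\theta_z'$ is a bounded harmonic function on $\Hyp$. By Fatou's theorem it admits nontangential boundary values Lebesgue-almost everywhere on $\R\PP^1$; as $\varphi$ ranges over a countable dense subset of $C(\C\PP^1)$ this produces a measurable map $\xi\mapsto\widetilde{\theta}_{\xi}$ from $\R\PP^1$ to probability measures on $\C\PP^1$, and the equivariance of the $\theta_z'$ transfers to an essentially $\rho$-equivariant map. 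Equivalently, via a Brownian motion $\omega$ on $\Hyp$, the measure-valued process $\theta_{\omega(t)}'$ is a bounded martingale (for the weak-$\ast$ topology), so it converges almost surely to a limit $\widetilde{\theta}_{\omega(\infty)}$ depending only on the boundary point $\omega(\infty)\in\R\PP^1$.

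The key step is to show that this limit is almost surely a Dirac mass. This is where the non-elementary hypothesis enters: by the discretization procedure of Furstenberg--Lyons--Sullivan already invoked in Proposition \ref{stationarymeasures}, the asymptotic behaviour of the Brownian motion pushed by $\rho$ mirrors that of a random walk on $\rho(\pi_1(\Sigma))$ with a nondegenerate driving measure, and for non-elementary subgroups of $PSL_2(\C)$ Furstenberg's proximality theorem (\cite{Fu3}) forces the asymptotic measures of the random walk to be Dirac. Hence $\widetilde{\theta}_{\xi}=\delta_{\beta'(\xi)}$ for some measurable, $\rho$-equivariant $\beta':\R\PP^1\to\C\PP^1$. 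Furstenberg's uniqueness of the boundary map gives $\beta'=\beta$ almost everywhere, and then the Poisson representation formula recovers $\theta_z'=\beta_{\ast}(P(z,\cdot)d\xi)=\theta_z$.

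The main obstacle is this proximality step: verifying that the martingale limit is Dirac rather than a general probability measure. Everything else (construction, equivariance, harmonicity, Fatou extraction, and the final identification via $\beta$) is essentially formal once the boundary values are known to be points. One can sidestep the obstacle cleanly by invoking the Furstenberg--Lyons--Sullivan correspondence so that proximality for the continuous-time foliated Brownian motion is reduced to the discrete random walk case treated in \cite{Fu3}.
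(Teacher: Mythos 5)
Your existence construction --- pushing forward the Poisson measure $P(z,\cdot)\,d\xi$ by Furstenberg's boundary map $\beta$ --- is exactly the construction the paper records in the paragraph preceding the proposition, though the paper attributes the whole statement to Deroin and Dujardin and gives no proof of its own. Your uniqueness argument is correct in outline and, importantly, identifies the genuine obstacle: showing that the weak-$\ast$ Fatou boundary values $\widetilde{\theta}_\xi$ of a competing family are almost surely Dirac. Resolving this by reducing to the discrete random-walk case via the Furstenberg--Lyons--Sullivan discretization and then invoking proximality together with Furstenberg's uniqueness of the equivariant boundary map is precisely the circle of ideas the paper itself uses for the uniqueness statements in Propositions \ref{uniquenessharmonic} and \ref{stationarymeasures}, so this is not a different route but a fleshed-out version of the cited one. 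One caveat worth recording: the FLS correspondence and the application of Furstenberg's stationary-measure theory both rest on an integrability hypothesis that the paper defers to Section~3.4 of~\cite{Al4}; a fully detailed version of your uniqueness step would need to verify that the FLS driving measure $\nu_{z_0}$ satisfies it.
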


Once again the equivariance allows us to define a family of measures on the fibers $(\theta_p)_{p\in\Sigma}$. Deroin and Dujardin note that the family $(\e_z)_{z\in\Hyp}$ given by Theorem \ref{hussenot2} satisfies these conditions. Hence we can conclude the proof of Theorem \ref{distriblimit}.

\begin{proposition}
Assume that the hypothesis of Theorem \ref{limitedprojective} hold. Let $(\theta_p)_{p\in\Sigma}$ be the family of harmonic measures of the representation $\rho$ defined in Proposition \ref{harmonicmeasures}. Then for every $p\in\Sigma$, the measure $s_p$ coincide with $\theta_p$.
\end{proposition}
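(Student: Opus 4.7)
The plan is to reduce the statement to a uniqueness argument using Proposition \ref{harmonicmeasures}. By Proposition \ref{asydistribbrownian} we already have $s_p=\e_p$ for every $p\in\Sigma$, so it suffices to show that the family $(\e_z)_{z\in\Hyp}$ obtained from Theorem \ref{hussenot2} coincides with the family $(\theta_z)_{z\in\Hyp}$ from Proposition \ref{harmonicmeasures}. Since $(\theta_z)$ is characterized by being the unique family of probability measures on $\C\PP^1$ which is $\rho$-equivariant and harmonic in $z$, I only have to verify that $(\e_z)$ satisfies these two properties.

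Equivariance is immediate from the equivariance relation \ref{equivariance}. If $\omega$ is a Brownian path on $\Hyp$ starting at $z$, then $\gamma\omega$ is a Brownian path starting at $\gamma z$ (because $\gamma$ acts by isometries of $\Hyp$), and $\DD(\gamma\omega(t))=\rho(\gamma)\DD(\omega(t))$, so $\DD(\gamma\omega(t))$ converges to $\rho(\gamma)\e(\omega)$. Pushing distributions yields $\rho(\gamma)\ast\e_z=\e_{\gamma z}$, as required.

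The heart of the argument is harmonicity of $z\mapsto \e_z(A)$ for each Borel set $A\dans\C\PP^1$. I would use the strong Markov property of Brownian motion together with the fact that $\e(\omega)$ is a tail event: it depends only on the behaviour of $\omega$ for large times. Fix $z\in\Hyp$ and a small hyperbolic ball $B(z,r)$, and let $\tau_r$ be the exit time from $B(z,r)$. Conditioning on $\omega(\tau_r)$ and applying the strong Markov property gives
$$\e_z(A)=\int_{\partial B(z,r)}\e_w(A)\,d\mu_{z,r}(w),$$
where $\mu_{z,r}$ is the exit distribution of the Brownian motion on $\partial B(z,r)$, which is the normalized surface measure by the rotational symmetry of the heat kernel in geodesic polar coordinates. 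Hence $z\mapsto\e_z(A)$ satisfies the spherical mean-value property on all sufficiently small geodesic balls, and is therefore harmonic for $\Delta_{\Hyp}$ once its local boundedness (hence its local integrability) is established. Applying Proposition \ref{harmonicmeasures} then gives $\e_z=\theta_z$ on $\Hyp$, and passing to the quotient yields $s_p=\e_p=\theta_p$ on every fiber $F_p$.

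The main technical obstacle to address is the Borel measurability and local boundedness of $w\mapsto \e_w(A)$, which is needed both to give meaning to the integral above and to conclude harmonicity from the mean-value property. Measurability can be obtained from the explicit construction of $\e(\omega)$ in \cite{H} as an almost-sure limit of a measurable functional of the Brownian path, together with Fubini applied to the Wiener measure; local boundedness by $1$ is automatic since $\e_w$ is a probability measure. Once these routine but nontrivial regularity points are handled, the uniqueness clause of Proposition \ref{harmonicmeasures} finishes the proof, and together with Propositions \ref{SRB}, \ref{projectionSRB}, \ref{stationarymeasures} and \ref{asydistribbrownian} completes the proof of Theorem \ref{distriblimit}.
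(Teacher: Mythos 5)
Your proposal is correct and follows the same route as the paper: reduce to $s_p=\e_p$ via Proposition \ref{asydistribbrownian}, check that $(\e_z)_{z\in\Hyp}$ is equivariant and harmonic, and invoke the uniqueness clause of Proposition \ref{harmonicmeasures}. The only difference is that the paper simply records Deroin and Dujardin's observation that $(\e_z)$ satisfies the two characterizing properties, whereas you spell out the harmonicity: the tail-measurability of $\e(\omega)$ together with the strong Markov property and the rotational invariance of the exit distribution on hyperbolic balls yields the spherical mean-value property, hence harmonicity. This is a correct, standard argument and makes the step self-contained without changing the overall structure of the proof.
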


\subsection{The set of singularities is uncountable and dense in the limit set}

\paragraph{The distribution is non atomic.} We have the following lemma which will imply that the set of limits of developed geodesic rays is uncountable.

\begin{lemma}
\label{noatom}
Assume that the hypothesis of Theorem \ref{limitedprojective} hold. Then the distribution $s_p$ is non atomic.
\end{lemma}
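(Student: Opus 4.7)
I will exploit the characterization of $s_p$ as a stationary measure given by Proposition \ref{stationarymeasures}: $s_p$ is the unique $\nu_p$-stationary measure on the fiber $F_p\simeq \C\PP^1$, where the family $(\nu_p)_{p\in\Sigma}$ comes from the Furstenberg-Lyons-Sullivan discretization and, as recalled just before that proposition, \emph{has full support} on $\pi_1(\Sigma)$. The plan is then to run the classical Furstenberg argument showing that a stationary measure for a non-elementary random walk on $\C\PP^1$ cannot have atoms.

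More precisely, I would proceed as follows. Suppose for contradiction that $s_p$ admits an atom, and let $M=\max_{x\in\C\PP^1}s_p(\{x\})>0$. Since $s_p$ is a probability measure, the set
$$A_M=\{x\in\C\PP^1\,:\,s_p(\{x\})=M\}$$
is finite and non-empty. By stationarity, for every $x\in A_M$,
$$M=s_p(\{x\})=\sum_{\gamma\in\pi_1(\Sigma)}\nu_p(\gamma)\,s_p\bigl(\{\rho(\gamma)^{-1}(x)\}\bigr).$$
Each term $s_p(\{\rho(\gamma)^{-1}(x)\})$ is at most $M$, so the equality forces $s_p(\{\rho(\gamma)^{-1}(x)\})=M$, i.e.\ $\rho(\gamma)^{-1}(x)\in A_M$, for every $\gamma$ in the support of $\nu_p$.

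Since $\nu_p$ has full support on $\pi_1(\Sigma)$, this shows that the finite set $A_M$ is invariant under the whole holonomy group $\rho(\pi_1(\Sigma))$. Then the normalized counting measure on $A_M$ is a $\rho$-invariant probability measure on $\C\PP^1$, contradicting the non-elementarity hypothesis. Hence $s_p$ has no atom, and the lemma follows.

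I do not expect a real obstacle here: the only delicate point is to invoke the correct inputs, namely (i) that $s_p$ coincides with the unique $\nu_p$-stationary measure (Proposition \ref{stationarymeasures}), and (ii) that the FLS family $\nu_p$ has full support on $\pi_1(\Sigma)$, which is recorded in the paper when describing the discretization procedure. Everything else is the standard two-line argument showing that a maximal-atom set of a stationary measure is holonomy-invariant, hence yields a finite invariant orbit incompatible with non-elementarity.
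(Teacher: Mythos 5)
Your proof is correct and follows exactly the same route as the paper's: invoke Proposition \ref{stationarymeasures} to identify $s_p$ with the unique $\nu_p$-stationary measure, consider the finite set of atoms of maximal mass, use stationarity together with the full support of $\nu_p$ to show this set is holonomy-invariant, and derive a contradiction with non-elementarity. You are slightly more explicit than the paper in spelling out that full support of $\nu_p$ is what upgrades invariance to the whole group and that the resulting finite invariant orbit supports a $\rho$-invariant probability measure, but these are exactly the steps the paper leaves implicit.
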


\begin{proof}
The easiest way to see this fact is to use Proposition \ref{stationarymeasures}: $s_p$ is the unique $\nu_p$-stationary measure. But a classical argument shows that since the holonomy group does not preserve any measure this measure has to be non atomic for if the contrary were true we could consider the finite subset $X\dans\C\PP^1$ of atoms of greatest mass. For $x\in X$ we would obtain by stationarity:
$$s_p(x)=\sum_{\gamma\in\pi_1(\Sigma)}\nu_p(\gamma) s_p(\rho(\gamma)^{-1}x),$$
which would imply $s_p(\rho(\gamma)^{-1}x)=s_p(x)$ for every $\gamma\in\pi_1(\Sigma)$. Finally the set $X$ would be invariant by holonomy which contradicts the hypothesis.
\end{proof}

\begin{proposition}
\label{uncountable}
Assume that the hypothesis of Theorem \ref{limitedprojective} hold. Then for every $p\in\Sigma$, the set of $\sigma^-(v)$ where $v$ ranges a full $d\theta$-measure subset of $T^1_p\Sigma$ is uncountable.
\end{proposition}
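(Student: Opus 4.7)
The plan is to deduce the proposition directly from Lemma \ref{noatom} combined with the defining Formula (\ref{distributionlimitpoints}) $s_p = \sigma^-_*(d\theta_p)$. The hard work has already been done, so the argument is essentially a one-liner based on the following standard measure-theoretic fact: a non-atomic Borel probability measure on $\C\PP^1$ assigns measure zero to any countable set.

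More concretely, let $V \subset T^1_p\Sigma$ be a full $d\theta_p$-measure subset on which the Lyapunov section $\sigma^-$ is defined, and set $E_p = \sigma^-(V) \subset \C\PP^1$. By definition of the push-forward and Formula (\ref{distributionlimitpoints}),
\[
s_p(E_p) \;\geq\; (d\theta_p)\bigl(\sigma^{-,-1}(E_p)\bigr) \;\geq\; (d\theta_p)(V) \;=\; 1,
\]
so $s_p(E_p) = 1$. Now suppose, for contradiction, that $E_p$ were countable. Writing $E_p = \{x_n\}_{n\in\N}$, countable additivity would give
\[
s_p(E_p) \;=\; \sum_{n\in\N} s_p(\{x_n\}).
\]
But Lemma \ref{noatom} asserts that $s_p$ is non-atomic, i.e.\ $s_p(\{x_n\}) = 0$ for every $n$, which would force $s_p(E_p) = 0$, contradicting $s_p(E_p) = 1$. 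Hence $E_p$ is uncountable, which is exactly the conclusion of the proposition.

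I do not expect any serious obstacle in this step: all the dynamical content (existence of the section $\sigma^-$, its measurability, the identification $s_p = \sigma^-_*(d\theta_p)$, and most crucially the non-atomicity of the stationary measure) has already been established in the previous propositions and in Lemma \ref{noatom}. The only point worth being explicit about is that $\sigma^-$ is defined only on a full-measure subset of $T^1_p\Sigma$, which is harmless since one only needs the push-forward on this domain to have total mass one.
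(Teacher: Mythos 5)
Your proof is correct and follows exactly the same route as the paper: Lemma \ref{noatom} (non-atomicity of $s_p$) together with the push-forward identity $s_p=\sigma^-_*(d\theta_p)$ and the elementary fact that a probability measure giving full mass to a countable set must have an atom. The paper states this in a single sentence; you have simply written out the contradiction argument explicitly, including the (harmless) remark about $\sigma^-$ being defined only almost everywhere.
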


\begin{proof}
This follows directly from Lemma \ref{noatom} and the fact that any measure supported on a countable set has atoms.
\end{proof}

\paragraph{The distribution charges open sets of the limit set.} By hypothesis, the holonomy group $\rho(\pi_1(\Sigma))$ is a non-elementary subgroup of $PSL_2(\C)$: it possesses a unique minimal set $\Lambda_{\rho}$ called its \emph{limit set}. We can show the following proposition:

\begin{proposition}
Assume that the hypothesis of Theorem \ref{limitedprojective} hold. Then for every $p\in\Sigma$, the set of $\sigma^-(v)$ where $v$ ranges a full $d\theta$-measure subset of $T^1\Sigma$ is dense in the limit set.
\end{proposition}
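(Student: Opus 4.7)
The plan is to reduce the density statement to the equality $\mathrm{supp}(s_p)=\Lambda_{\rho}$, and then to extract this equality from the identification of $s_p$ with a stationary measure of a random walk driven by a measure of full support on $\pi_1(\Sigma)$.

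First I would make the following general observation: if $F\subset T^1_p\Sigma$ has full $d\theta_p$-measure, then the (a priori non-Borel) subset $\sigma^-(F)\subset\C\PP^1$ carries the measure $s_p$, in the sense that for any Borel set $B$ disjoint from $\sigma^-(F)$ one has $(\sigma^-)^{-1}(B)\cap F=\emptyset$, hence $(\sigma^-)^{-1}(B)$ is $d\theta_p$-negligible, hence $s_p(B)=0$ by definition $s_p=\sigma^-\!\ast d\theta_p$. Therefore any open set $U\subset\C\PP^1$ with $s_p(U)>0$ must meet $\sigma^-(F)$. Consequently, if we know that $\mathrm{supp}(s_p)=\Lambda_{\rho}$, every nonempty open subset of $\Lambda_{\rho}$ meets $\sigma^-(F)$, which is exactly density of $\sigma^-(F)$ in $\Lambda_{\rho}$.

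It thus remains to prove that $\mathrm{supp}(s_p)=\Lambda_{\rho}$. By Proposition \ref{stationarymeasures}, $s_p$ is the unique $\nu_p$-stationary probability on $F_p$, where the family $(\nu_p)_{p\in\Sigma}$ is obtained by the Furstenberg--Lyons--Sullivan discretization procedure and has full support on $\pi_1(\Sigma)$. From the stationarity identity $s_p=\sum_{\gamma}\nu_p(\gamma)\,\rho(\gamma)\!\ast\!s_p$ we read off that $s_p\geq\nu_p(\gamma_0)\,\rho(\gamma_0)\!\ast\!s_p$ for every $\gamma_0\in\pi_1(\Sigma)$, which forces $\rho(\gamma_0)\cdot\mathrm{supp}(s_p)\subset\mathrm{supp}(s_p)$. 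Applying this to $\gamma_0$ and to $\gamma_0^{-1}$ shows that $\mathrm{supp}(s_p)$ is a nonempty closed $\rho(\pi_1(\Sigma))$-invariant subset of $\C\PP^1$. By minimality of the limit set it must contain $\Lambda_{\rho}$. On the other hand, $s_p$ is identified with the harmonic measure $\theta_p$ of the representation (last Proposition of Section \ref{distributionsingularities}), which is supported on $\Lambda_{\rho}$ as recalled in the Introduction. Combining both inclusions yields $\mathrm{supp}(s_p)=\Lambda_{\rho}$.

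The only subtle point is the full-support property of $\nu_p$ coming from the discretization: this is a standard feature of the Furstenberg--Lyons--Sullivan construction already invoked in the proof of Proposition \ref{uniquenessharmonic}, so no additional work is needed. The argument is otherwise completely formal once the identifications of $s_p$ collected in Theorem \ref{distriblimit} are in hand.
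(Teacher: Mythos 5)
Your proof is correct, and it takes a genuinely different route from the paper's. The paper argues pointwise with the Lyapunov section: it first notes that $\sigma^-(v)$ lies in $\Lambda_\rho$ because $\sigma^-(v)=A_t(v)^{-1}\sigma^-(g_t(v))$ with $A_t(v)$ a word of growing length in the holonomy generators, and then uses the trivialization $\widetilde{\sigma}^-(z,\xi)=(z,\xi,\widetilde{s}^-(\xi))$ along center-stable leaves together with the equivariance $\rho(\gamma)\widetilde{s}^-(\xi)=\widetilde{s}^-(\gamma\xi)$ to show that $\sigma^-(T^1_p\Sigma)$ is $\rho(\pi_1(\Sigma))$-invariant, hence dense in $\Lambda_\rho$ by minimality. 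You instead pass to the measure $s_p=\sigma^-\!\ast d\theta_p$, prove the sound reduction that any open set charged by $s_p$ must meet $\sigma^-(F)$ for every full-measure $F$, and then pin down $\mathrm{supp}(s_p)=\Lambda_\rho$ by combining the stationarity identity (for the full-support discretization measure $\nu_p$, giving $\mathrm{supp}(s_p)$ closed, nonempty, $\rho$-invariant, hence $\supset\Lambda_\rho$) with the identification $s_p=\theta_p$ and the known support of harmonic measures (giving $\subset\Lambda_\rho$). Your approach has the advantage of being insensitive to the measure-zero bookkeeping inherent in the paper's pointwise argument (where $\sigma^-$ is only defined on a full-measure set, so one has to intersect over the countably many $\gamma$-translates to make the orbit argument rigorous); the cost is that it leans on more machinery from earlier in the section (Propositions identifying $s_p$ with stationary and harmonic measures), whereas the paper's proof is self-contained given only Proposition \ref{sections} and the cocycle structure. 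One very minor remark: to conclude literal density of $\sigma^-(F)$ \emph{inside} $\Lambda_\rho$ (not merely $\Lambda_\rho\subset\overline{\sigma^-(F)}$), you should also use $\mathrm{supp}(s_p)\subset\Lambda_\rho$ to shrink $F$ to the full-measure subset where $\sigma^-(v)\in\Lambda_\rho$; this is implicit in your argument and costs nothing.
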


\begin{proof}
Firstly all $\sigma^-(v)$ belong to the limit set of the holonomy group because of the invariance by the foliated geodesic flow: we can always write $\sigma^-(v)=(A_t)^{-1}(v)\sigma^-(g_t(v))$ and for almost every $v$, $A_t(v)$ is a word in the generators of $\rho(\pi_1(\Sigma))$ whose length goes to infinity with $t$.

Secondly for every $p\in\Sigma$, $v\in T^1_p\Sigma$ and $\gamma\in\pi_1(\Sigma)$, $\rho(\gamma)\sigma^-(v)$ belongs to the image of the restriction of $\sigma^-$ to $T^1_p\Sigma$: by minimality of the action of the holonomy group on its limit set this implies that the image of the restriction of $\sigma^-$ to $T^1_p\Sigma$ is dense in the limit set. In order to see this fact, work in the universal cover of $\Sigma$. There is a identification $T^1\Hyp\simeq\Hyp\times\R\PP^1$ obtained by associating to a vector $v$ the couple $(z,\xi)=(c_v(0),c_v(\infty))$ where $c_v$ is the geodesic directed by $v$. This identification trivializes the center-stable foliation (we will also meet in the sequel a different identification which trivializes the center-unstable foliation) and conjugates the natural actions of $\pi_1(\Sigma)$ on these two spaces. Since the section $\sigma^-$ commutes with the center-stable foliations, its lift can be written in coordinates as:
$$\widetilde{\sigma}^-(z,\xi)=(z,\xi,\widetilde{s}^-(\xi)).$$
Now the equivariance relation $\rho(\gamma)\widetilde{s}^-(\xi)=\widetilde{s}^-(\gamma\xi)$ shows that for all $v\in T^1_p\Sigma$, $\rho(\gamma)\sigma^-(v)\in\sigma^-(T^1_p\Sigma)$. That concludes the proof.
\end{proof}

\section{Limits of developed geodesic rays}
\label{limitsgeodesicrays}

\subsection{Distance between diagonal and Lyapunov sections along geodesics}

Until the end of this article, $\Sigma$ is a hyperbolic surface of finite type, and $(\DD,\rho)$ is a parabolic branched projective structure. We consider the associated Riccati foliation $(\Pi,M,\Sigma,\C\PP^1,\F)$, the associated diagonal section $\sigma^0$, and we endow $M$ with an admissible metric. Furthermore, we assume that the holonomy group $\rho(\pi_1(\Sigma))$ has no invariant probability measure on $\C\PP^1$.

The diagonal section $\sigma^0$ clearly induces a smooth section of the unit tangent bundle, that we also denote by $\sigma^0:T^1\Sigma\to T^1\F$ which is invariant by the holonomy over the unit tangent fibers $T^1_p\Sigma$. We also call it the diagonal section of $\Pi_{\ast}$.

\paragraph{Developing map and the cocycle.} Recall that by definition, a \emph{developed geodesic ray} in $\C\PP^1$ is the image of a geodesic ray of $\Hyp$ by the developing map $\DD$. We want to prove that in that case, a typical developed geodesic ray has a limit.

Recall moreover that for every $v\in T^1\Sigma$, $A_t(v)$ is the holonomy map along the orbit segment $g_{[0;t]}(v)$. Hence, we have the following important formula which holds (with the obvious abusive notation $\DD(g_t(v))=\DD(\widetilde{c}_v(t))$ for the lift $\widetilde{c}_v(t)$ of the geodesic directed by $v$) for every $t\in\R$:
\begin{equation}
\label{formuladeveloping}
\DD(g_t(v))=(A_t(v))^{-1}\sigma^0(g_t(v)).
\end{equation}

\paragraph{North-South dynamics.} By definition of Lyapunov sections (see Proposition \ref{lines}), the fiberwise dynamics of the cocycle over a geodesic orbit is nothing but a North-South dynamics. More precisely, a simple application of the $\eps$-reduction theorem of Oseledets-Pesin (see \cite{KH}) implies the following useful proposition.

\begin{proposition}
\label{northsouth}
Let $(\Pi,M,\Sigma,\C\PP^1,\F)$ be a Riccati foliation with a parabolic holonomy representation which preserves no measure on $\C\PP^1$. Let $0<\lambda_1<\lambda_2<\lambda_+$. Then for Liouville-almost every $v\in T^1\Sigma$, there exists $T_0$ such that for every $t\geq T_0$, we have:
$$A_t(v)^{-1}\left[\,^cD(\sigma^+(g_t(v)),e^{-\lambda_1t})\right]\dans D(\sigma^-(v),e^{-\lambda_2 t}),$$
where $D(x,r)$ denotes the disc in $\C\PP^1$ centered at $x$ and of radius $r$ for the Fubini-Study metric.
\end{proposition}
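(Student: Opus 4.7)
The plan is to apply the Oseledets--Pesin $\eps$-reduction theorem to the cocycle $A_t$ along almost every geodesic orbit, so that the fiberwise dynamics becomes a genuine diagonal north-south map with eigenvalue ratio of order $e^{-2\lambda^+ t}$, and then to translate this picture back to the Fubini--Study metric. Pesin theory applies without modification in our non-compact setting thanks to the integrability bound on $A_{\pm 1}$ established in \cite{BGVil}, which together with Theorem \ref{positivitylyapexp} delivers a positive top exponent $\lambda^+$ and the measurable Oseledets splitting of Proposition \ref{lines}.

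Fix $\eps > 0$, to be adjusted at the end. The $\eps$-reduction theorem (see \cite{KH}) provides, on a Liouville-full subset of $T^1\Sigma$, a measurable Lyapunov basis $(e^+_v, e^-_v)$ of the linear fiber over $v$ spanning $\sigma^+(v)$ and $\sigma^-(v)$ respectively, together with a measurable function $C_\eps : T^1\Sigma \to [1, +\infty)$ that is tempered along the flow, in the sense that $C_\eps(g_t(v)) \leq C_\eps(v) e^{\eps |t|}$. Furthermore the change of basis between $(e^+_v, e^-_v)$ and a Fubini--Study orthonormal frame of $F_{\ast, v}$ has norm and conorm bounded by $C_\eps(v)$, and in the Lyapunov bases over $v$ and $g_t(v)$ the cocycle $A_t(v)$ is diagonal, $\mathrm{diag}(\alpha_t(v), \beta_t(v))$, with $|\alpha_t(v)| \geq e^{(\lambda^+ - \eps) t}$ and $|\beta_t(v)| \leq e^{(-\lambda^+ + \eps) t}$ for all $t \geq 0$.

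In the affine Lyapunov chart at $g_t(v)$ sending $\sigma^+(g_t(v))$ to $[1\!:\!0]$ and $\sigma^-(g_t(v))$ to $[0\!:\!1]$, the inverse cocycle $A_t(v)^{-1}$ restricts on the complement of $\sigma^-(g_t(v))$ to the linear contraction $x \mapsto (\beta_t(v)/\alpha_t(v))\, x$ of $\C$, with Lipschitz constant at most $e^{(-2\lambda^+ + 2\eps) t}$. The Fubini--Study distance on the fiber and the distance read in this chart are comparable up to a fixed power of $C_\eps$ at the relevant point. Carrying the hypothesis $\dist_{F_{\ast, g_t(v)}}(w',\sigma^+(g_t(v))) \geq e^{-\lambda_1 t}$ through the three steps --- conversion to Lyapunov coordinates at $g_t(v)$, contraction by $A_t(v)^{-1}$, and conversion back to Fubini--Study at $v$ --- and using temperedness to absorb the factors $C_\eps(g_t(v))$ into the exponential, one obtains
$$\dist_{F_{\ast, v}}\!\left(A_t(v)^{-1} w',\,\sigma^-(v)\right) \;\leq\; K(v) \cdot e^{(\lambda_1 - 2\lambda^+ + c\,\eps)\, t}$$
for some absolute constant $c > 0$ and a measurable $K$ finite at $v$. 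Since $\lambda_1, \lambda_2 < \lambda^+$ forces $\lambda_1 - 2\lambda^+ < -\lambda_2$, one may choose $\eps$ so small that $\lambda_1 - 2\lambda^+ + c\,\eps < -\lambda_2$ and then take $T_0(v)$ large enough to absorb $K(v)$ into the exponential, yielding the desired inclusion. The only point requiring care is verifying that Pesin theory applies on the non-compact bundle $T^1\Sigma$, but this is immediate from the integrability bound recalled above, and no further work is needed.
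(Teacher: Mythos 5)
Your proof is correct and takes exactly the approach the paper alludes to: the paper states Proposition \ref{northsouth} without proof as ``a simple application of the $\eps$-reduction theorem of Oseledets--Pesin,'' and what you write out is precisely that application (Lyapunov change of basis with tempered distortion $C_\eps$, diagonalized cocycle contracting at rate $e^{(-2\lambda^++2\eps)t}$, and absorption of the tempered factors by shrinking $\eps$ and enlarging $T_0$). One trivial slip: the affine Lyapunov chart that carries $A_t(v)^{-1}$ to $x\mapsto(\beta_t/\alpha_t)x$ is the complement of $\sigma^+(g_t(v))$, not of $\sigma^-(g_t(v))$, but this does not affect the argument.
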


Henceforth, by Formula \eqref{formuladeveloping} and Proposition \ref{northsouth}, if we want to prove that for Liouville almost every $v\in T^1\Sigma$,
\begin{equation}
\label{limitedevrayae}
\lim_{t\to\infty}\DD(g_t(v))=\sigma^-(v),
\end{equation}
it is enough to prove the following key proposition:

\begin{proposition}
\label{diagonaloutsidelyap}
Let $(\DD,\rho)$ be a non-elementary parabolic branched projective structure on a hyperbolic surface of finite type $\Sigma$. Let $(\Pi,M,\Sigma,\C\PP^1,\F)$ be the associated Riccati foliation and $\sigma^0$ be the associated diagonal section. Then there exists a Borel set $\X\dans T^1\Sigma$ $g_t$-invariant and full for the Liouville measure such that for every $0<\lambda_1
<\lambda_+$, and every $v\in\X$, there exists $T_1$ such that for every $t\geq T_1$, we have:
$$\sigma^0(g_t(v))\notin D(\sigma^+(g_t(v)),e^{-\lambda_1 t}).$$
\end{proposition}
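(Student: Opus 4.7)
The strategy is to reduce the proposition to a scalar integrability statement and conclude by a Borel-Cantelli argument along the geodesic flow. Introduce the function
\[
\phi(v) := -\log\dist_{F_{\ast,v}}\bigl(\sigma^0(v),\sigma^+(v)\bigr)
\]
on $T^1\Sigma$, which is nonnegative up to an additive constant since $\C\PP^1$ has bounded Fubini-Study diameter. In these terms, the proposition asserts that for $\Liouv$-a.e. $v$ and every $0<\lambda_1<\lambda^+$, $\phi(g_t v)\leq\lambda_1 t$ for all large $t$. Since $\lambda_1$ may be taken arbitrarily small in $(0,\lambda^+)$, this is equivalent to $\phi(g_t v)=o(t)$ $\Liouv$-a.e., which is exactly the sort of conclusion extracted from an $L^1$ bound on $\phi$ via the $g_t$-invariance of $\Liouv$.

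The key intermediate step will therefore be to prove $\phi\in L^1(\Liouv)$: this is precisely the integrability result mentioned in the introduction and assigned to Section \ref{Proofofintegrability}. I expect this to be the main obstacle. Indeed $\sigma^+$ is only a measurable section produced by Oseledets' theorem while $\sigma^0$ is holomorphic, and nothing a priori prevents the two from being exponentially close on a large set. Establishing the bound will presumably require the parabolic local model $\DD(z)=\frac{1}{2\mathbf i\pi}\log z$ at each cusp to rule out bad contributions from the noncompact ends, together with the non-elementarity of $\rho$ (which guarantees $\lambda^+>0$ by Theorem \ref{positivitylyapexp} and gives $\sigma^+$ a genuinely ``random'' character transverse to the rigid $\sigma^0$).

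Granting $\phi\in L^1(\Liouv)$, the ergodic-theoretic step is standard. For any fixed $\lambda_1>0$, using the $g_1$-invariance of $\Liouv$ and the fact that $s\mapsto\Liouv\{\phi>s\}$ is decreasing,
\[
\sum_{n\geq 1}\Liouv\bigl\{v:\phi(g_n v)>\lambda_1 n\bigr\}=\sum_{n\geq 1}\Liouv\{\phi>\lambda_1 n\}\leq\frac{1}{\lambda_1}\int\phi\,d\Liouv<\infty,
\]
so the first Borel-Cantelli lemma yields: for $\Liouv$-a.e. $v$, $\phi(g_n v)\leq\lambda_1 n$ for all large enough integer $n$. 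Intersecting over a countable sequence $\lambda_1^{(k)}\searrow 0$ and saturating by the geodesic flow produces the $g_t$-invariant full-measure set $\X$ claimed in the statement.

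It remains to pass from integer to real times. Two approaches seem viable. One can either prove the strengthened integrability $\tilde\phi(v):=\sup_{s\in[0,1]}\phi(g_s v)\in L^1(\Liouv)$ as a byproduct of the estimates of Section \ref{Proofofintegrability} and run the same Borel-Cantelli argument with $\tilde\phi$ in place of $\phi$; or pick $\lambda_1<\lambda_1'<\lambda^+$, apply Borel-Cantelli at rate $\lambda_1'$ along integers, and absorb the fiberwise distortion of $\sigma^0$ and of $\sigma^+=G_{\cdot}\sigma^+$ over unit intervals in the slack $\lambda_1'-\lambda_1$, using that $\log^+\|A_{\pm 1}\|\in L^1(\Liouv)$ (the integrability hypothesis underlying the application of Oseledets in \cite{BGVil}) and is therefore tempered along $\Liouv$-generic orbits.
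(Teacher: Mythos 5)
Your proposal matches the paper's proof essentially line for line: the paper also reduces the claim to the subexponential estimate $\frac{1}{t}\log\dist_{\C\PP^1}(\sigma^+(g_t v),\sigma^0(g_t v))\to 0$ (Proposition~\ref{subexponential}), deduces it from Borel--Cantelli applied to a $\Liouv$-integrable function (Proposition~\ref{ergodic}), and proves integrability of precisely the sup-over-unit-time function $\psi(v)=\sup_{t\in[0,1]}\log\dist_{\C\PP^1}(\sigma^+(g_t v),\sigma^0(g_t v))$ in Section~\ref{Proofofintegrability}. Of your two proposed routes from discrete to continuous time, the paper takes the first one, folding the supremum into the integrability statement (Proposition~\ref{integrability}) rather than arguing via temperedness.
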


\paragraph{Proposition \ref{diagonaloutsidelyap} implies Theorem \ref{limitedprojective}.} Until the end of this paragraph, we assume that Proposition \ref{diagonaloutsidelyap} holds true. Let us state what remains to be proven. As we mentioned before, this proposition implies that \eqref{limitedevrayae} holds almost everywhere for the Liouville measure. In other words, it implies that the conclusion of Theorem \ref{limitedprojective} holds only for $\Leb$-almost every $z\in\Sigma$. It remains to prove that it holds \emph{for every} $z$. Before we begin the proof of the theorem, let us make some remarks.

\begin{enumerate}
\item Even though the section $\sigma^-$ is a priori only defined on the $g_t$-invariant set full for the Liouville measure $\X$, we know by Proposition \ref{sections} that it commutes with geodesic and stable horocyclic flows: it is well defined on the whole center-stable manifold of every point of $\X$.
\item Since $\sigma^-$ commutes with the geodesic flows, if the conclusion of Theorem \ref{limitedprojective} holds for a vector $v\in T^1\Sigma$, it also holds for every $g_t(v)$, $t\in\R$.
\item All center-stable manifolds, except those of periodic orbits and those corresponding to the cusps, are planes. In particular for every $z\in\Sigma$ and $d\theta$-almost every $v\in T^1_z\Sigma$, the center-stable manifold of $v$ is simply connected.
\item For every $z\in\Sigma$ and $d\theta$-almost every $v\in T^1_z\Sigma$, there exists  $v'\in\X$ such that $v'\in W^{cs}(v)$ and $\dist_{cs}(v,v')<1/2$.
\end{enumerate}

Hence, it is enough to prove that for every $v'\in\X$ whose center-stable manifold is simply connected and every $v\in W^s(v')$ with $\dist_s(v,v')<1$, we have:
$$\lim_{t\to\infty}\DD(g_t(v))=\sigma^-(v).$$

In order to do so, we will need the following proposition, whose proof is postponed until the next paragraph.

\begin{proposition}
\label{sectionlipschitz}
Let $(\DD,\rho)$ be a parabolic branched projective structure on a hyperbolic surface $\Sigma$ of finite type. Then the section $\sigma^0:\Sigma\to M$ is Lipschitz.
\end{proposition}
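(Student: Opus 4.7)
The plan is to show that $\|d\sigma^0\|$ is uniformly bounded on $\Sigma$ equipped with its Poincar\'e metric: since $\Sigma$ is a connected Riemannian manifold and $M$ carries a Riemannian structure, this will imply the Lipschitz property. First I would decompose $\Sigma = K \sqcup \Int C_1 \sqcup \cdots \sqcup \Int C_k$ into its compact core and the finitely many cusps, and bound $\|d\sigma^0\|$ on each piece separately. On $K$ the section $\sigma^0$ is smooth with relatively compact domain, so its differential is bounded by compactness.

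The substantive step is the treatment of each cusp $C_i$. Here I would pass to the universal cover: $\sigma^0$ lifts to the graph $z\mapsto (z,\DD(z))$ inside $\Hyp\times\C\PP^1$. Working with the admissible metric on $M$ obtained as the quotient of the product metric (Poincar\'e $\times$ Fubini--Study) by the diagonal $\pi_1(\Sigma)$-action, the norm $\|d\sigma^0(p)\|$ at $p\in C_i$ equals $\sqrt{1+\|d\DD(z)\|_{\mathrm{hyp}\to FS}^2}$ for any chosen lift $z$ of $p$. I would choose the lift in the distinguished component $\widetilde{C}_i$ produced by parabolicity, normalize source coordinates so that $\widetilde{C}_i=\Hyp_{\geq 1}$ and $\gamma_i$ acts as $z\mapsto z+1$, and post-compose $\DD$ with a M\"obius sending the fixed point of $\rho(\gamma_i)$ to $\infty$; then $\rho(\gamma_i)$ reads $z\mapsto z+1$ and the developing map becomes an affine translation $\DD(z)=z+a$ for some $a\in\C$. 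A direct computation then gives
$$\|d\DD(z)\|_{\mathrm{hyp}\to FS} = \frac{y}{1+|z+a|^2},\qquad z=x+iy\in \Hyp_{\geq 1},$$
which is uniformly bounded on $\Hyp_{\geq 1}$ (indeed it decays like $1/y$ as $y\to\infty$, uniformly in $x$ in the fundamental strip). Combined with the bound on $K$, this yields the uniform bound on $\|d\sigma^0\|$.

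The only subtle point, rather than a real obstacle, is to normalize coordinates correctly in each cusp so as to reduce $\DD$ to a translation of $\C$: one must simultaneously use the biholomorphism $h_i$ appearing in the definition of parabolicity to make the deck transformation a standard translation in the source, \emph{and} apply a M\"obius in the target to send the parabolic fixed point of $\rho(\gamma_i)$ to $\infty$, so that both actions read $z\mapsto z+1$. The fact that, for other preimages of $C_i$, neither the source translation nor the target translation is normalized (so that $\DD$ looks much worse) is irrelevant: bounding the local Lipschitz constant at a given base point requires only one lift, which we are free to choose in the normalized component $\widetilde{C}_i$.
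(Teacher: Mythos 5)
There is a genuine gap. After bringing the source presentation of the cusp $\widetilde{C}_i$ to the standard half-plane $\Hyp_{\geq 1}$ with $\gamma_i$ acting as $z\mapsto z+1$, and post-composing with $A_i^{-1}$ so that $\rho(\gamma_i)$ also reads $z\mapsto z+1$, the developing map becomes exactly $h_i$ from the definition of parabolicity: a biholomorphism from $\widetilde{C}_i$ onto its image commuting with $z\mapsto z+1$. Such a map has the form $z\mapsto z+F(e^{2\mathbf{i}\pi z})$ for an arbitrary holomorphic $F$ on a neighborhood of $0$ in $\D$; there is no reason for $F$ to be constant, i.e.\ for $h_i$ to be a translation. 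Your computation $\|d\DD(z)\|_{\mathrm{hyp}\to FS}=y/(1+|z+a|^2)$ therefore only treats the special case $h_i(z)=z+a$ and silently ignores the distortion of the hyperbolic metric introduced by the non-isometric biholomorphism $h_i$. Said differently: the definition of parabolicity only gives a holomorphic change of coordinates at the source, not an isometric one, and the metric used to measure $\|d\sigma^0\|$ is the one coming from the uniformization of $\Sigma$, not its $h_i$-push-forward.

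What must be supplied is a bound on the conformal factor of $h_i^*(ds^2_{\mathrm{hyp}})$ relative to $ds^2_{\mathrm{hyp}}$ near the cusp. This is precisely the content of the paper's Lemma~\ref{hologerm}: passing to the quotient disc $\D^*$ via $z\mapsto e^{2\mathbf{i}\pi z}$, the map $h_i$ descends to a germ of biholomorphism of $(\D^*,0)$ fixing $0$, and the conformal factor
$$\fhi(z)=|h'(z)|\,\frac{|z|\,\log|z|}{|h(z)|\,\log|h(z)|}$$
tends to $1$ at the origin because $h(z)\sim h'(0)z$. This gives the needed Lipschitz bound on $h_i$, and together with the observation that the Fubini--Study metric is dominated by the hyperbolic metric on $\Hyp_{\geq 1}$ (with conformal factor $y/(1+x^2+y^2)\leq 1$), it yields the bound on $\|d\sigma^0\|$ in the cusps. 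Your overall plan (bound $\|d\sigma^0\|$ by compactness on the core and separately in each cusp, choose a convenient lift) is the same as the paper's; the one missing piece is the comparison lemma for the biholomorphism $h_i$.
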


Now consider $v'\in\X$ whose center-stable manifold is simply connected, as well as $v\in W^s(v')$ such that $v'=h^s_{\delta}(v)$ with $0<\delta<1$. Then, for every $t>0$, $\dist_s(g_t(v),g_t(v'))=\delta e^{-t}$. Moreover, since the center-stable manifold is simply connected, the following conjugacy formula holds for every $t>0$:

\begin{equation}
\label{conjugacyformula}
A_t(v)=(B_{\delta e^{-t}}^s(g_t(v)))^{-1} A_t(v') B_{\delta}^s(v).
\end{equation}
In particular, this shows that $A_t(v)^{-1}\sigma^0(g_t(v))=(B_{\delta}^s(v))^{-1} A_t(v')^{-1} B_{\delta e^{-t}}^s(g_t(v))\,\sigma^0(g_t(v))$.

Since $\sigma^0$ is Lipschitz, there exists $C>0$ such that for every $t>0$, we have:
$$\dist_M(\sigma^0(g_t(v)),\sigma^0(g_t(v')))\leq Ce^{-t}.$$
Moreover, for every $t>0$, 
$$\dist_M(\sigma^0(g_t(v)),B^s_{\delta e^{-t}}(g_t(v))\sigma^0(g_t(v)))\leq\delta e^{-t}.$$
Hence by the triangular inequality, we have for every $t>0$:
$$\dist_M(\sigma^0(g_t(v')),B_{\delta e^{-t}}^s(g_t(v))\,\sigma^0(g_t(v)))\leq (C+\delta)e^{-t}.$$

Recall that $v'\in\X$: we can apply Proposition \ref{diagonaloutsidelyap}, and if we have chosen $0<\lambda_1<\lambda_2<\Min(\lambda^+,1)$, we get $T_2>0$ such that for every $t\geq T_2$:
$$B_{\delta e^{-t}}^s(g_t(v))\,\sigma^0(g_t(v))\notin D(\sigma^+(g_t(v')),e^{-\lambda_1t}).$$

From Proposition \ref{northsouth}, we deduce that for every $t\geq T_2$:
$$A_t(v')^{-1} B_{\delta e^{-t}}^s(g_t(v))\,\sigma^0(g_t(v))\in D(\sigma^-(v'),e^{-\lambda_2t}).$$

Finally, we use the fact that the map $(B^s_{\delta}(v))^{-1}:F_{\ast,v'}\to F_{\ast,v}$ is Lipschitz to prove the existence of $C'>0$ such that for all $t\geq T_2$:
$$\dist_{\C\PP^1}(A_t(v)^{-1}\sigma^0(g_t(v)),\sigma^-(v))\leq C' \dist_{\C\PP^1}(A_t(v')^{-1} B_{\delta e^{-t}}^s(g_t(v)),\sigma^-(v'))\leq C' e^{-\lambda_2 t}.$$

It proves in particular that $\lim_{t\to\infty}\DD(g_t(v))=\sigma^-(v)$. Hence, assuming Proposition \ref{diagonaloutsidelyap} Theorem \ref{limitedprojective} is proven. \quad \hfill $\square$

\paragraph{Proof of Proposition \ref{sectionlipschitz}.} Before giving the proof of the proposition, let us recall a consequence of the uniformization theorem. For a puncture $p_i$ of $\Sigma$, there is a distinguished local holomorphic coordinate $z$ around $p_i$ with $z(p_i)=0$ where the metric reads as:
\begin{equation}
\label{standard}
ds^2=\frac{|dz|^2}{(|z|\,\log|z|)^2}.
\end{equation}

The next lemma asserts that holomorphic changes of coordinate near punctures of $\Sigma$ are close to be hyperbolic isometries.
\begin{lemma}
\label{hologerm}
Let $\D^*$ be the unit disc punctured at the origin, endowed with the complete hyperbolic metric given by \eqref{standard}. Let $h:(\D^*,0)\to(\D^*,0)$ be a germ of biholomorphism fixing the origin. Then $h^*(ds^2)$ and $ds^2$ are conformally equivalent with a conformal factor which tends to $1$ at the origin.
\end{lemma}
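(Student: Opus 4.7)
The plan is to reduce the statement to a direct computation using the fact that a germ of biholomorphism of $(\D^*,0)$ fixing the puncture extends holomorphically across the origin. The only subtlety is correctly controlling the logarithmic factor appearing in the conformal weight.

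First I would invoke Riemann's removable singularity theorem: since $h$ is a bounded biholomorphism of a punctured neighborhood of $0$ sending $0$ to $0$ in the sense that $h(z)\to 0$, it extends to a genuine biholomorphism of some disc around $0$ with $h(0)=0$ and $a:=h'(0)\neq 0$. Thus I can write $h(z)=az(1+\varepsilon(z))$ with $\varepsilon$ holomorphic and $\varepsilon(z)\to 0$ as $z\to 0$, and correspondingly $h'(z)=a(1+\eta(z))$ with $\eta(z)\to 0$.

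Second, I would simply pull back the metric \eqref{standard}. Since $h$ is holomorphic, the pullback is automatically conformal to $ds^2$, with conformal factor
$$\Phi(z)\;=\;\frac{|h'(z)|^2\,|z|^2\,(\log|z|)^2}{|h(z)|^2\,(\log|h(z)|)^2}.$$
The first fraction $|h'(z)|^2|z|^2/|h(z)|^2$ tends to $|a|^2/|a|^2=1$ using the expansions above. For the second fraction, I would write $\log|h(z)|=\log|a|+\log|z|+\log|1+\varepsilon(z)|$ and divide by $\log|z|$; since $\log|z|\to-\infty$ while the other two summands remain bounded, the ratio $\log|h(z)|/\log|z|$ tends to $1$. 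Multiplying these together gives $\Phi(z)\to 1$, which is the conclusion.

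The step I expect to be slightly delicate, though not hard, is handling the case $|a|\neq 1$ in the logarithm: a naive approach might worry that $\log|h(z)|-\log|z|\to\log|a|\neq 0$ ruins things, but the precise statement involves the \emph{ratio} of logarithms, not their difference, and the divergence of $\log|z|$ to $-\infty$ absorbs the constant $\log|a|$. Everything else is a single line of asymptotic analysis, and no further input from the structure of the branched projective structure is needed.
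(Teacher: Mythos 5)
Your proof is correct and uses essentially the same approach as the paper: extend $h$ across the puncture by Riemann's removable singularity theorem, write the conformal factor explicitly, and observe that $|h(z)|\sim|h'(0)||z|$ and $\log|h(z)|\sim\log|z|$ as $z\to 0$. The paper is slightly more terse (it simply states $|h(z)|\log|h(z)|\sim|h'(0)||z|\log|z|$), but the content is identical, and your explicit treatment of the logarithmic factor via the ratio $\log|h(z)|/\log|z|\to 1$ makes the one potentially confusing point clearer.
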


\begin{proof}
Write the Taylor expansion at the origin of the germ $h$ as $\sum_{n=1}^{\infty} a_nz^n$ with $a_1\neq 0$. The desired conformal factor is precisely given by:
$$\fhi(z)=|h'(z)|\frac{|z|\,\log|z|}{|h(z)|\,\log|h(z)|}.$$

Since $a_1\neq 0$, we have $|h(z)|\,\log|h(z)|\sim_{z\to 0}|a_1z|\,\log|z|=|h'(0)|\,|z|\,\log|z|$, which implies that the conformal factor tends to $1$, as claimed in the lemma.
\end{proof}

Now, let us come back to the proof of Proposition \ref{sectionlipschitz}. It is enough to prove that the developing map $\DD$ is Lipschitz over a fundamental ideal polygon $P$. Such a polygon may be written as a union of a compact part and a finite number of cusps. It is possible to assume that all cusps $C_i\dans\Sigma$ lie inside a holomorphic chart where the metric reads as \eqref{standard}.

For the Fubini-Study distance, the diameter of $\C\PP^1$ is $\pi/2$. Hence it is enough to prove that $\DD$ is Lipschitz in restriction to each cusp, and to the closed $\pi/2$-neighbourhood of the compact part. The latter is immediate since this closed neighbourhood is compact and $\DD$ is holomorphic.

Now choose a cusp $C_i$ and consider a connected component $\widetilde{C}_i$ of $pr^{-1}(C_i)$, associated to the parabolic element $\gamma_i\in\pi_1(\Sigma)$. By the parabolicity of the structure, inside $\widetilde{C}_i$, $\DD$ reads as $A_i\circ h_i$, where $A_i\in PSL_2(\C)$ conjugates the actions of $z\mapsto z+1$ and $\rho(\gamma_i)$, and $h_i:\widetilde{C}_i\mapsto\Hyp_{\geq 1}$ is biholomorphic onto its image and conjugates the actions of $\gamma_i$ and $z\mapsto z+1$. It is enough to prove that $h_i$ is Lipschitz in a fundamental domain of the action of $\gamma_i$.

\begin{center}
\textbf{Claim.} \emph{Inside a fundamental domain for $\gamma_i$, $h_i$ is Lipschitz for the hyperbolic metric at the source and at the goal.}
\end{center}

Establishing the claim suffices to end the proof, because inside $\Hyp_{\geq 1}$ the hyperbolic metric is conformally equivalent the the Fubini-Study metric with a conformal factor given by:
$$\fhi(x+\mathbf{i}y)=\frac{y}{1+x^2+y^2},$$
which is smaller than $1$ in $\Hyp_{\geq 1}$.

It remains to prove the claim. First, consider the projection $\Hyp\to\D^*$ given by $z\mapsto e^{2\mathbf{i}\pi z}$: it is invariant by $z\mapsto z+1$, and the projection of the hyperbolic metric is precisely the standard metric \eqref{standard}. The biholomorphism $h_i$ passes to the quotient and gives a biholomorphism of $C_i$ inside a domain $D_i$ which is strictly included in $\D^*$ (it lies in fact in the domain $0<|z|<e^{-2\pi}$). The hyperbolic metric reads as \eqref{standard} in a holomorphic chart: we can now use Lemma \ref{hologerm}, as well as the fact that $C_i,D_i$ lie strictly inside $\D^*$. This biholomorphism, and thus $h_i$, is Lipschitz for the hyperbolic metric.

\subsection{Reduction to a problem of integrability}

\paragraph{Subexponential evolution of the distance.} In the sequel we intend to prove a stronger statement than Proposition \ref{diagonaloutsidelyap} which clearly implies it. We shall prove that the evolution of the distance between the diagonal and Lyapunov sections is subexponential along a typical orbit of the geodesic flow.
\begin{proposition}
\label{subexponential}
Let $(\DD,\rho)$ be a non-elementary parabolic branched projective structure on a hyperbolic surface of finite type $\Sigma$. Let $(\Pi,M,\Sigma,\C\PP^1,\F)$ be the associated Riccati foliation and $\sigma^0$ be the associated diagonal section. Then there exists a Borel set $\X$ which is invariant by the geodesic flow and full for the Liouville measure such that for every $v\in\X$:
$$\lim_{t\to\infty}\frac{1}{t}\log\,\dist_{\C\PP^1}(\sigma^+(g_t(v)),\sigma^0(g_t(v)))=0.$$
\end{proposition}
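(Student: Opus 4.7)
The strategy is to reduce the pointwise subexponential bound to an $L^1$-integrability statement on $T^1\Sigma$ and then conclude by a standard Borel--Cantelli argument together with ergodicity of the geodesic flow. The function to look at is
$$\psi(v)=\log\,\dist_{\C\PP^1}\bigl(\sigma^0(v),\sigma^+(v)\bigr),$$
defined on the $g_t$-invariant full-Liouville-measure set where the Lyapunov section $\sigma^+$ exists. Since $\dist_{\C\PP^1}\leq \pi/2$, only the negative part $\psi^-=\max(-\psi,0)$ can spoil integrability; quantitatively, $\psi^-\in L^1(\Liouv)$ is equivalent to a bound of the form $\Liouv\bigl(\{\dist_{\C\PP^1}(\sigma^0,\sigma^+)<\eps\}\bigr)=o(1/|\log\eps|)$ as $\eps\to 0$, with enough decay to make $\sum_n\Liouv(\{\psi^->n\})$ converge.

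Granting this integrability, the conclusion is routine. For every $\eps>0$, $g_1$-invariance of $\Liouv$ gives
$$\sum_{n\geq 0}\Liouv\bigl(\{v\,;\,\psi^-(g_nv)>\eps n\}\bigr)=\sum_{n\geq 0}\Liouv(\{\psi^->\eps n\})\leq\frac{\|\psi^-\|_1}{\eps}<\infty,$$
so Borel--Cantelli yields $\limsup_n\psi^-(g_nv)/n\leq\eps$ almost surely, and letting $\eps\to 0$ along a sequence gives $\psi(g_nv)/n\to 0$ for $\Liouv$-a.e.\,$v$. A continuous-time interpolation (by passing to the $g_s$-envelope $\sup_{s\in[0,1]}\psi^-(g_sv)$, which stays in $L^1$ via Fubini combined with the continuity of $\sigma^0$ and the uniform log-distortion of the time-one cocycle $A_1$) upgrades the limit along integers to a genuine limit as $t\to+\infty$. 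Taking for $\X$ the resulting $g_t$-invariant full-measure set where $\psi(g_tv)/t\to 0$ completes the proof.

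The main obstacle will therefore be the $L^1$-integrability of $\psi^-$, which is precisely the task of Section~\ref{Proofofintegrability}. Two sources of degeneracy have to be controlled. First, near the cusps of $\Sigma$ the parabolic normal form $\DD=A_i\circ h_i$ forces $\sigma^0$ to accumulate on a fixed point of the corresponding parabolic holonomy, which is a candidate limit for $\sigma^+$ as well; one must estimate how often geodesic orbits witness this coincidence. Second, at the finite set of branch points of $\sigma^0$ the diagonal section is tangent to $\F$ and any separation estimate must degenerate. The plan for handling both is to combine the local parabolic normal form of the developing map with the $\eps$-reduction of Proposition~\ref{northsouth}, and to bound the Liouville volume of the set of $v\in T^1\Sigma$ whose geodesic orbit spends an abnormally long time in a neighbourhood of these pathological regions, translating the resulting quantitative separation between $\sigma^0$ and $\sigma^+$ into the required logarithmic tail bound for $\psi^-$.
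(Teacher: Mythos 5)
Your overall strategy coincides with the paper's: reduce the subexponential decay to an integrability statement and then conclude by Borel--Cantelli (the paper's Proposition~\ref{ergodic}). The gap is in your continuous-time interpolation step. You claim that the envelope $\sup_{s\in[0,1]}\psi^-(g_s v)$ inherits integrability from $\psi^-$ ``via Fubini combined with the continuity of $\sigma^0$ and the uniform log-distortion of the time-one cocycle $A_1$.'' Two things go wrong. First, on a \emph{non-compact} surface the cocycle $A_1$ does not have uniformly bounded log-distortion: only $\int\log^+\|A_{\pm1}\|\,d\Liouv<\infty$ is available near the cusps, so ``uniform'' is false. Second, even granting an integrable bound $h(v)$ on the distortion, the perturbation is \emph{additive}, not multiplicative: by Formula~\eqref{formuladeveloping} one gets
$$\dist_{\C\PP^1}(\sigma^0(g_sv),\sigma^+(g_sv))\;\geq\;\|A_s(v)\|_{\rm proj}^{-2}\Bigl(\dist_{\C\PP^1}(\sigma^0(v),\sigma^+(v))-Cs\Bigr),$$
where $C$ comes from the Lipschitz constant of $\DD$ (Proposition~\ref{sectionlipschitz}). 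When $\dist_{\C\PP^1}(\sigma^0(v),\sigma^+(v))$ is comparable to or smaller than $Cs$, this gives nothing: the pointwise distance can be small but positive at $v$ while vanishing at some $s\in(0,1]$, making the envelope infinite where $\psi^-$ is finite. So $\sup_{s\in[0,1]}\psi^-(g_s\cdot)\in L^1$ is a genuinely stronger statement, not a Fubini corollary of $\psi^-\in L^1$. The paper does not take this detour: it defines $\psi$ in Proposition~\ref{integrability} \emph{directly} as the supremum over $t\in[0,1]$ (compare also \eqref{lafonctionpsi}, where the absolute value appears), and all of Section~\ref{Proofofintegrability} is devoted to proving the integrability of \emph{that} envelope, from which the interpolation to $t\to\infty$ is immediate. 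Your plan would be repaired simply by making the sup part of the definition of the function whose integrability you prove.
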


\paragraph{Problem of integrability.}

In order to prove Proposition \ref{subexponential}, we will use the following classical fact which is an application of Borel-Cantelli lemma.

\begin{proposition}
\label{ergodic}
Let $(X,\B,\mu)$ be a probability space, and $T:X\to X$ be a $\mu$-preserving transformation. Let $\fhi:X\to\R$ be a measurable function which is $\mu$-integrable. Then there is a set $\X\dans X$ which is $T$-invariant and full for $\mu$ such that for every $x\in\X$,
$$\lim_{n\to\infty}\frac{1}{n}\fhi\circ T^n(x)=0.$$
\end{proposition}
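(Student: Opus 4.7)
The plan is to reduce the statement to a standard Borel--Cantelli argument. Fix $\varepsilon>0$. Since $T$ preserves $\mu$, for every $n\geq 1$ we have
$$\mu\bigl(\{x\in X:|\fhi\circ T^n(x)|\geq \varepsilon n\}\bigr)=\mu\bigl(\{x\in X:|\fhi(x)|\geq \varepsilon n\}\bigr).$$
By the standard layer-cake/Markov estimate, since $\fhi\in L^1(\mu)$,
$$\sum_{n=1}^{\infty}\mu\bigl(\{|\fhi|\geq \varepsilon n\}\bigr)\leq \frac{1}{\varepsilon}\int_X|\fhi|\,d\mu<\infty.$$
Hence Borel--Cantelli applies: the set of $x$ for which $|\fhi\circ T^n(x)|\geq \varepsilon n$ for infinitely many $n$ is $\mu$-null.

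Next, I would intersect the null sets obtained this way for $\varepsilon=1/k$, $k\in\N^*$, and take the complement: the resulting set $\X_0$ has full $\mu$-measure, and for every $x\in\X_0$, for every $k\geq 1$, $|\fhi\circ T^n(x)|<n/k$ for $n$ large enough. This is exactly the statement $\fhi\circ T^n(x)/n\to 0$.

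Finally I would promote $\X_0$ to a $T$-invariant full-measure set. The simplest way is to observe that the limit condition is automatically $T$-invariant: if $\fhi(T^n x)/n\to 0$, then
$$\frac{\fhi(T^n(Tx))}{n}=\frac{\fhi(T^{n+1}x)}{n+1}\cdot\frac{n+1}{n}\to 0,$$
so $Tx\in\X_0$. Thus $\X:=\bigcap_{k\geq 0}T^{-k}(\X_0)=\X_0$ already does the job. There is no serious obstacle here; the only point to watch is the use of $T$-invariance of $\mu$ in the very first equality, which is what allows the measures of the exceptional sets to be summable independently of $n$.
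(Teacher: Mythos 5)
Your proof is correct and is precisely the Borel--Cantelli argument the paper has in mind; the paper itself states Proposition \ref{ergodic} without proof, describing it only as ``a classical fact which is an application of Borel-Cantelli lemma,'' and your write-up supplies exactly that argument. One small remark: to get $T^{-1}(\X_0)=\X_0$ (rather than merely $\X_0\subset T^{-1}(\X_0)$) you should also note the converse implication, namely that $\fhi(T^{n}(Tx))/n\to 0$ forces $\fhi(T^{n}x)/n=\frac{\fhi(T^{n-1}(Tx))}{n-1}\cdot\frac{n-1}{n}\to 0$; with that observation your set $\X_0$ is genuinely $T$-invariant without any further intersection.
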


Proposition \ref{subexponential} is now a consequence of Proposition \ref{ergodic} and of the following property of integrability whose proof is the object of section \ref{Proofofintegrability}.

\begin{proposition}
\label{integrability}
Let $(\DD,\rho)$ be a non-elementary parabolic branched projective structure on a hyperbolic surface of finite type $\Sigma$. Let $(\Pi,M,\Sigma,\C\PP^1,\F)$ be the associated Riccati foliation and $\sigma^0$ be the associated diagonal section. Then the measurable function defined by the following formula
\begin{equation}
\label{defpsi}
\psi(v)=\Sup_{t\in[0;1]} \log\,\dist_{\C\PP^1}(\sigma^+(g_t(v)),\sigma^0(g_t(v)))
\end{equation}
 is Liouville-integrable.
\end{proposition}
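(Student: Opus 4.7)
The supremum in $t$ is essentially free. Since the Fubini--Study diameter of $\C\PP^1$ is $\pi/2$, $\psi$ is bounded above by $\log(\pi/2)$, so integrability of $\psi$ is equivalent to integrability of $\psi^-$. Writing $f(t,v)=\log\dist_{\C\PP^1}(\sigma^+(g_tv),\sigma^0(g_tv))$, whenever $\psi(v)<0$ one has $f(0,v)\leq\psi(v)<0$, and consequently $\psi^-(v)=-\psi(v)\leq|f(0,v)|$. This gives the pointwise bound
$$\int_{T^1\Sigma}\psi^-\,d\Liouv \;\leq\; \int_{T^1\Sigma}\bigl|\log\dist_{\C\PP^1}(\sigma^+(v),\sigma^0(v))\bigr|\,d\Liouv(v), \qquad (*)$$
so the proposition reduces to showing that the right-hand side is finite.

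\textbf{Disintegration via the harmonic measure.} I would then disintegrate the Liouville measure via the foot map $T^1\Sigma\to\Sigma$, obtaining the uniform conditionals $d\theta_p$ on $T^1_p\Sigma$ and a multiple of the hyperbolic area on the base. Since $\sigma^0(v)$ only depends on $p=\Pi(v)$, and since the $\theta_p$-distribution of $v\mapsto\sigma^+(v)\in F_p$ coincides with the harmonic measure $m_p$ (this is Proposition \ref{SRB} applied to $\mu^+$, which projects to the same measure $m$ on $M$ as $\mu^-$), the right-hand side of $(*)$ rewrites as
$$\int_\Sigma I(p)\,d\mathrm{Area}_\Sigma(p), \qquad I(p):=\int_{F_p}\bigl|\log\dist_{F_p}(x,\sigma^0(p))\bigr|\,dm_p(x).$$
The task thus reduces to bounding the logarithmic potential $I(p)$ of $m_p$ at $\sigma^0(p)$ and then integrating over $\Sigma$.

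\textbf{H\"older regularity and the cuspidal obstacle.} For a fixed $p$, $I(p)<\infty$ would follow from a Frostman-type estimate $m_p(B(x,r))\leq C(p)r^\alpha$ uniform in $x$, which yields $I(p)\leq C(p)/\alpha+\log(\pi/2)$ by a direct layer-cake computation. Such an estimate is classical for stationary measures of non-elementary random walks on $PSL_2(\C)$ with positive Lyapunov exponent: in our setting $m_p$ is $\nu_p$-stationary (Proposition \ref{stationarymeasures}) for a walk $\nu_p$ of good moments coming from Furstenberg--Lyons--Sullivan discretization, and the positivity of the exponent is Theorem \ref{positivitylyapexp}. The \emph{main obstacle} I expect is to control the constant $C(p)$ so that it remains \emph{integrable} over the non-compact surface $\Sigma$. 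This forces a careful analysis inside each cusp $C_i$, where the parabolic structure provides an explicit model, $\DD=A_i\circ h_i$ with $h_i$ conjugating $\gamma_i$ to $z\mapsto z+1$, so that $\sigma^0(p)$ is explicitly computable and $m_p$ is tied to its neighbours by the parabolic holonomy $\rho(\gamma_i)$. One then exploits the rapid shrinking of the cuspidal area form $\tfrac{|dz|^2}{(|z|\log|z|)^2}$ near the puncture to absorb any controlled blow-up of $I(p)$ and obtain $\int_{C_i}I(p)\,d\mathrm{Area}<\infty$. Combined with the compact-part estimate, this closes $(*)$ and proves integrability of $\psi$.
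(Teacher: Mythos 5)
Your opening reduction is correct and is a genuine (if small) simplification of the paper's set-up: since $\psi$ is the \emph{supremum} of $f(t,\cdot)$ over $t\in[0,1]$ and $\psi\leq\log(\pi/2)$, one indeed has $\psi^-\leq|f(0,\cdot)|$, so the supremum over $t$ costs nothing and it suffices to prove integrability of $\log\dist_{\C\PP^1}(\sigma^+(v),\sigma^0(v))$ alone. (The paper instead works with $\widetilde\psi=\sup_t|\log\dist(\widetilde s^+(\xi),\DD(G_t(z,\xi)))|$, which dominates $|\psi|$, and handles the time-$1$ orbit segments $L_\xi(z)$ explicitly.) Your disintegration $\int_\Sigma I(p)\,d\mathrm{Area}(p)$ with $I(p)=\int_{F_p}|\log\dist(x,\sigma^0(p))|\,dm_p(x)$ is also legitimate: $\sigma^+_*(d\theta_p)=\sigma^-_*(d\theta_p)=m_p$ because the flip $v\mapsto-v$ preserves $d\theta_p$ and exchanges $\sigma^\pm$. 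This is a genuinely different route from the paper, which works in $\Hyp\times\R\PP^1$, lifts $\sigma^+$ and $\sigma^0$ to functions of $\xi$ and of $z$ separately, disintegrates Liouville via the Poisson kernel $k(z;\xi)$, and integrates first over $z$ in a fundamental polygon.

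However the argument stops precisely at the point where the paper's proof begins. You correctly identify that the crux is to show $\int_\Sigma I(p)\,d\mathrm{Area}(p)<\infty$, with the danger coming from the cusps, but you do not establish any quantitative bound on $I(p)$ as $p$ tends to a puncture. The phrase ``absorb any controlled blow-up'' begs the question: the entire content of Section \ref{Proofofintegrability} is to show that the blow-up is \emph{logarithmic} (Proposition \ref{horizontal} gives $\int_{-1/2}^{1/2}\widetilde\psi(x+\mathbf{i}y,\xi)\,dx\leq C\log(\xi^2+y^2)$), and to verify that $\log y/y^2$ is integrable; this requires the bilipschitz control of $\DD$ in the cusp coordinates (Proposition \ref{distortiondeveloping}, Lemma \ref{localizeD}), the geometric localization Lemma \ref{square}, the comparison of horocyclic and geodesic distances, and the Euclidean projection estimates of Lemma \ref{lagrange}. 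None of this is supplied or replaced by your sketch. Two further points are unaddressed: (i) the Frostman-type dimension bound for $\nu_p$-stationary measures requires exponential moment conditions on the FLS step distributions $\nu_p$, \emph{uniformly in $p$}, which is neither verified nor obviously true, especially as $p$ escapes to a cusp; (ii) even on the compact part you need the constants $C(p),\alpha(p)$ to be uniform, and the paper's Lemma \ref{lowerbounddev} shows that the branch points of $\DD$ (tangencies of $\sigma^0$ with $\F$) require a separate multiplicity argument that your scheme does not visibly account for. As it stands the proposal is a plausible programme, not a proof.
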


\section{Proof of the integrability}
\label{Proofofintegrability}

In order to prove the integrablity of $\psi$, it is convenient to work in the cover $T^1\Hyp=\Hyp\times\R\PP^1$ endowed with coordinates that trivialize the center-unstable foliation. The main idea is to use the facts that $\sigma^+$ commutes with the center-unstable foliations, and $\sigma^0$ with the foliations by unit tangent fibers. Hence when we lift them to the cover, they realize as graphs of functions of $(z,\xi)\in \Hyp\times\R\PP^1$ in $\C\PP^1$, the first one depending only on the $\xi$ variable, and the second one depending only on the $z$ variable. Finally we are able to separate variables, which simplifies a lot the computations.

\subsection{The center-unstable foliation and lifts of the sections}

\paragraph{Trivialization of the center-unstable foliation.} We may consider the identification $T^1\Hyp\simeq \Hyp\times \R\PP^1$ obtained by sending $v$ on the couple $(c_v(0),c_v(-\infty))$ where $c_v$ is the directed geodesic determined by $v$.

This identification is an equivariance: it conjugates the actions of the group of direct isometries $PSL_2(\R)$ on $T^1\Hyp$ by differentials and on $\Hyp\times \R\PP^1$ by diagonal maps. Moreover, it also trivializes the center-unstable foliation: a slice $\Hyp\times\{\xi\}$ has to be thought as filled with unstable horocycles centered at $\xi$, and geodesics starting at $\xi$.

We denote by $G_t(z,\xi)$ the restriction of the geodesic flow to the center-unstable leaf $\Hyp\times\{\xi\}$. Hence, each of the slices $\Hyp\times\{\xi\}$ has a foliation denoted by $\G_{\xi}$, which is defined as the orbit space of this restricted geodesic flow.

In these coordinates, the Liouville measure is obtained by integration against the length element $d\xi$ of the measures:
$$dm_{\xi}(z)=k(z;\xi)d\Leb(z)=\frac{y}{(x-\xi)^2+y^2}\frac{dx\,dy}{y^2}.$$
The density $k(z;\xi)$ is the famous \emph{Poisson kernel} inside the hyperbolic plane.

\paragraph{Lifts of the sections.} The section $\sigma^+$ can be lifted as an equivariant section 
$$\widetilde{\sigma}^+:\Hyp\times\R\PP^1\to\Hyp\times\R\PP^1\times\C\PP^1.$$
Since $\sigma^+$ commutes with the center-unstable foliations, the lift reads in these coordinates as:
$$\widetilde{\sigma}^+(z,\xi)=(z,\xi,\widetilde{s}^+(\xi)),$$
where $\widetilde{s}^+:\R\PP^1\to\C\PP^1$ is a measurable map satisfying the equivariance relation $\widetilde{s}^+\circ\gamma=\rho(\gamma)\circ\widetilde{s}^+$ for every $\gamma\in\pi_1(\Sigma)$.

Similarly in these coordinates the lift of $\sigma^0$, which is the developing map, reads as follows:
$$\DD(z,\xi)=(z,\xi,\DD(z)).$$

We shall fix now a fundamental ideal polygon $P\dans\Hyp$ which can be decomposed as a union of a compact part $K$, and of $2l$ cusps $C^+_i$ bounded by the geodesic sides of the polygon, as well as by segments of horocycles where we recall that $l$ is the maximal number of mutually disjoint and non-homotopic geodesics whose ends arrive to punctures. Proving Proposition \ref{integrability} is equivalent to proving that:
$$I=\iint_{P\times\R}\widetilde{\psi}(z,\xi)\,dm_{\xi}(z) d\xi<\infty,$$
where:
\begin{equation}
\label{lafonctionpsi}
\widetilde{\psi}(z,\xi)=\Sup_{t\in[0;1]}\left|\log \dist_{\C\PP^1}(\widetilde{s}^+(\xi),\DD(G_{t}(z,\xi)))\right|.
\end{equation}

We will decompose this integral as a sum $I_K+\sum_{j=1}^{2l} I_{C^+_j}$, where $I_K$ is the above integral taken on $K\times\R$, and $I_{C_j^+}$ on $C^+_j\times\R$, and prove that each of these terms are finite.

\subsection{Integrability over the compact part}
\label{Compactpart}

\paragraph{Foliations of the compact part.} For $\xi\in\R$, consider the set $K_{\xi}=\bigcup_{z\in K}G_{[0;1]}(z,\xi)$, and $K'=\bigcup_{\xi\in\R}K_{\xi}$. The set $K'$ is a compact subset of $\Hyp$, and has the property that for all $z\in K$ and $\xi\in\R$, $G_{[0;1]}(\xi,z)\dans K'$.

It is also foliated by the traces of $\G_{\xi}$: when $z\in\Hyp$, let $L_{\xi}(z)$ denote $K'\cap\G_{\xi}(z)$. Then we have that for all $z\in K$ and $\xi\in\R$, $\widetilde{\psi}(z,\xi)\leq \widetilde{\psi}'(z,\xi)$ where:

$$\widetilde{\psi}'(z,\xi)=\Sup_{w\in L_{\xi}(z)}\left|\log \dist_{\C\PP^1}(\widetilde{s}^+(\xi),\DD(w))\right|.$$
Hence, in order to deal with the compact part, it is enough to prove the integrability over $K'\times\R$ of $\widetilde{\psi}'$.

Remark that the latter function is constant along the $L_{\xi}(z)$: it will be useful for the proof.

\paragraph{Decomposition of the compact part.} The developing map $\DD$ is holomorphic and nonconstant. As a consequence it has only a finite number of critical points in the compact set $K'$, that we denote by $(a_j)_{j\in J}$.

Hence there exist a number $\delta>0$, a finite number of \emph{disjoint} discs $(U_j)_{j\in J}$ centered at $a_j$ and of hyperbolic radii $2\delta$, as well as a finite number of discs $(V_{\alpha})_{\alpha\in A}$ of hyperbolic radii $\delta$ such that:

\begin{itemize}
\item $K'\dans\bigcup_{j\in J}U_j\cup\bigcup_{\alpha\in A}V_{\alpha}$;
\item for every $j\in J$ and $\alpha\in A$, $V_{\alpha}\cap D_{\Hyp}(a_j,\delta)=\vide$;
\item for every $j\in J$ and $\alpha\in A$, $\DD(U_j)$ and $\DD(V_{\alpha})$ are proper open sets of $\C\PP^1$;
\item when restricted to $V_{\alpha}$, the developing map is a biholomorphism to its image.
\end{itemize}

Since $\DD(U_j)$ are proper open sets of $\C\PP^1$, each of these sets are included in an affine chart so that we can imagine these sets as included in $\C$. Hence the restriction $\DD_{|U_j}$ reads as follows: there exist an integer $n_j>1$ and a map $h_j:U_j\to\C$ which is a biholomorphism on its image such that for any $z\in U_j$:

\begin{equation}
\label{power}
\DD(z)-\DD(a_j)=h_j(z)^{n_j}.
\end{equation}

\paragraph{Lower bound for the distance between the two sections.} The following lemma allows us to treat the problem of existence of critical points.

\begin{lemma}
\label{lowerbounddev}
There exists a constant $C_0>0$ such that for all $z\in K'$ and $s\in\DD(K')$, we have:
$$\dist_{\C\PP^1}(\DD(z),s)\geq C_0 \prod_{\DD(w)=s}\dist_{\Hyp}(z,w).$$
\end{lemma}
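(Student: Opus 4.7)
My plan is to reduce the inequality to a local Euclidean estimate using the explicit structure of $\DD$ on the cover $\{U_j\}\cup\{V_\alpha\}$ of $K'$, and then to combine the local estimates by a compactness argument. First I will observe that on the compact set $K'$ the hyperbolic and Euclidean metrics are uniformly bi-Lipschitz equivalent, and that $\DD(K')$ is contained in finitely many affine charts on which the Fubini--Study and Euclidean metrics are also uniformly bi-Lipschitz equivalent, so it suffices to prove the inequality with all distances replaced by Euclidean ones. The product in the inequality should be understood as ranging over preimages $w$ of $s$ lying in $K'$; by the explicit local description of $\DD$, this is a finite set whose cardinality is uniformly bounded by some integer $N$ depending only on the cover. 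I will moreover assume, refining the cover if needed, that $K'$ is already covered by the $V_\alpha$'s together with strictly smaller concentric discs $U'_j\Subset U_j$, so that for $z\in U'_j$ the point $h_j(z)$ is at uniformly positive Euclidean distance from $\partial h_j(U_j)$.

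Fix $z\in K'$ and $s\in\DD(K')$, and enumerate the preimages $w_1,\dots,w_m$ of $s$ in $K'$. If $z\in V_\alpha$, then $\DD$ is biholomorphic on $V_\alpha$ and at most one $w_i$ lies in $V_\alpha$; a uniform Lipschitz bound on $(\DD|_{V_\alpha})^{-1}$ gives $|\DD(z)-s|\geq C|z-w_i|$. If instead $z\in U'_j$, let $\zeta_s$ be any fixed $n_j$-th root of $s-\DD(a_j)$ and set $\omega=e^{2\mathbf{i}\pi/n_j}$; the factorization
\[\DD(z)-s\;=\;h_j(z)^{n_j}-\zeta_s^{n_j}\;=\;\prod_{k=0}^{n_j-1}\bigl(h_j(z)-\omega^k\zeta_s\bigr)\]
then holds. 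The indices $k$ for which $\omega^k\zeta_s\in h_j(U_j)$ are in bijection, via $k\mapsto h_j^{-1}(\omega^k\zeta_s)$, with the preimages of $s$ lying in $U_j$, and the bi-Lipschitz property of $h_j$ on $U_j$ gives $|h_j(z)-\omega^k\zeta_s|\geq C|z-h_j^{-1}(\omega^k\zeta_s)|$ for such $k$. For the remaining indices, $\omega^k\zeta_s$ lies outside $h_j(U_j)$, hence at positive Euclidean distance from the relatively compact subset $h_j(U'_j)\ni h_j(z)$, so the corresponding factor is uniformly bounded below by a positive constant.

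Combining both cases, $|\DD(z)-s|$ is bounded below, up to a uniform positive constant, by the product of $|z-w_i|$ taken over those preimages of $s$ lying in the same cover element as $z$. The remaining preimages in $K'$ lie in different cover elements and satisfy $\dist_\Hyp(z,w)\leq\diam_\Hyp(K')$, so the product of their hyperbolic distances is bounded above by $\diam_\Hyp(K')^N$, a universal constant. Rearranging yields the desired inequality with $C_0$ depending only on the bi-Lipschitz constants of $(\DD|_{V_\alpha})^{-1}$ and of $h_j$, on $N$, and on $\diam_\Hyp(K')$. The main technical obstacle is ensuring uniformity of the various bi-Lipschitz constants across the finitely many cover elements, and carefully handling edge cases --- such as $z$ lying in an overlap of two cover elements, or $s$ being such that $\omega^k\zeta_s$ is close to $\partial h_j(U_j)$ --- all of which are resolved by the finiteness of the cover and the compactness of $K'$.
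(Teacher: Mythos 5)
Your proof takes essentially the same route as the paper's: reduce to Euclidean estimates by compactness, split into the non-critical case ($V_\alpha$, biholomorphism with uniformly bounded-below derivative) and the critical case ($U_j$, local model $\DD(z)-\DD(a_j)=h_j(z)^{n_j}$), and in the latter use the algebraic factorization $|z_1^{n}-z_2^{n}|=\prod_{w^{n}=z_2^{n}}|z_1-w|$ together with the bi-Lipschitz control on $h_j$. The one place where you go beyond the paper's written argument is the explicit treatment of the indices $k$ for which $\omega^k\zeta_s\notin h_j(U_j)$: the paper's proof invokes the factorization and the bi-Lipschitz property of $h_j$ without discussing these ``stray'' factors, whereas you shrink to $U'_j\Subset U_j$ so that $h_j(z)$ stays at uniform positive distance from $\partial h_j(U_j)$ and the stray factors are uniformly bounded below. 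This is exactly the role the paper's double-radius choice (the $U_j$ of radius $2\delta$ surrounding the $\delta$-discs avoided by the $V_\alpha$) is meant to play implicitly, so you are not introducing a new idea, just making the paper's tacit safety margin explicit. Your bookkeeping for the global-to-local reduction (bounding the contribution of preimages outside the cover element by $\diam_{\Hyp}(K')^N$) is the same as the paper's one-line ``since $K'$ has finite diameter.'' In short: same proof, slightly more carefully stated on one point the paper leaves implicit.
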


\begin{proof}
We start by noticing that by the decomposition of $K'$ that we described in the previous paragraph, there exists an integer $n$ such that each element of $\DD(K')$ has at most $n$ preimages in $K'$. Hence, since $K'$ has finite diameter, it is enough to find a constant $C>0$ such that for $z\in \Omega$, $\Omega$ being either a set $U_j$ or a set $V_{\alpha}$, and $s\in\DD(\Omega)$:

\begin{equation}
\label{localinequality}
\dist_{\C\PP^1}(\DD(z),s)\geq C \prod_{w\in\Omega,\DD(w)=s}\dist_{\Hyp}(z,w).
\end{equation}

When $\Omega$ is of the form $V_{\alpha}$, Inequality \eqref{localinequality} holds for some universal $C$ because $\DD$ is a biholomorphism in restriction to each of these open sets, which are uniformly far from the critical points: the derivatives of the $\DD_{|V_{\alpha}}$ are uniformly bounded away from zero.

When $\Omega$ is of the form $U_j$, it contains a unique critical point $a_j$. Then as we mentioned above, there exist an integer $n_j>1$ and a map $h_j:U_j\to\C$ such that in an affine chart the restriction of $\DD$ to $U_j$ reads as (\ref{power}). By compactness, in restriction to $U_j$ and to $\DD(U_j)$ we may compare respectively the hyperbolic and spherical distances with the euclidian one with a uniform distortion. Then Inequality \eqref{localinequality} will hold with the euclidian distance because $h_j$ is a biholomorphism (its derivative is bounded away from zero independently of $j$) and because for every $z_1,z_2\in\C$ and $n\in\N$, we have the following equality:

$$|z_1^n-z_2^n|=\prod_{w^n=z_2^n}|z_1-w|.$$
\end{proof}

\paragraph{Upper bound of the integral.} Over the compact $K'$, the Poisson kernel $k(z,\xi)$ is, up to a uniform multiplicative constant, controlled by $1/(1+\xi^2)$, which is integrable over $\R$. Hence, by Fubini, it is enough to find a constant $C_1$ independent of $\xi\in\R$ such that for every $z\in K'$:
$$\int_{K'}\widetilde{\psi}'(z,\xi)d\Leb(z)\leq C_1.$$

Let $\xi\in\R$. Then there are two cases. Either $\widetilde{s}(\xi)$ belongs to the $1/1000$-neighbourhood of $\DD(K')$, or it does not. In the latter case, the function $\psi(.,\xi)$ is bounded from above by $\log(1000)$ in $K'$, which has a finite area. In the first case we can, by pushing it slightly by the geodesic flow, enlarge the compact $K'$ in such a way that $\DD(K')$ contains $\widetilde{s}(\xi)$.

Assume now that $\DD(K')$ contains $\widetilde{s}^+(\xi)$. The number of preimages of $\widetilde{s}^+(\xi)$ inside $K'$ is finite and bounded independently of $\xi\in\R$. By Lemma \ref{lowerbounddev}, and since $\C\PP^1$ has finite diameter, we find that there is a constant $C>0$ which is independent of $\xi$ such that:
$$\widetilde{\psi}'(z,\xi) \leq C+\sum_{\DD(\zeta)=\widetilde{s}^+(\xi)}\widetilde{\phi}(z,\zeta,\xi),$$
where $\widetilde{\phi}(z,\zeta,\xi)=\Sup_{w\in L_{\xi}(z)}|\log\dist_{\Hyp}(\zeta,w)|$. It is then enough to bound from above by a uniform constant the integral over $K'$ of each $\widetilde{\phi}(.,\zeta,\xi)$, $\zeta\in K'$, $\xi\in\R$.

\paragraph{Integrability over the compact part: end of the proof.} We have shown that in order to get the integrability of $\widetilde{\psi}$ it is enough to prove the following lemma:

\begin{lemma}
\label{compactcase}
The integrals over $K'$ of the functions $\widetilde{\phi}(.,\zeta,\xi)$, $\zeta\in K'$, $\xi\in\R$ against the Lebesgue measure are bounded independently of $\zeta,\xi$.
\end{lemma}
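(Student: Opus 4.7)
The plan is to use the $PSL_2(\R)$-invariance of the hyperbolic distance to reduce to the case $\xi=\infty$, where the leaves of $\G_\xi$ become vertical half-lines in the upper half-plane, and then to carry out an explicit Fubini computation based on the scaling $t=(x-\zeta'_x)/\zeta'_y$.

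I would first get rid of the supremum. Since $K'$ has finite hyperbolic diameter $D$ and $d_{\Hyp}(\zeta,w)\geq d_{\Hyp}(\zeta,\G_\xi(z))$ for every $w\in L_\xi(z)\subset\G_\xi(z)$, one obtains $\widetilde{\phi}(z,\zeta,\xi)\leq \log D+\bigl(-\log d_{\Hyp}(\zeta,\G_\xi(z))\bigr)_+$, so that it is enough to bound
\[
\int_{K'}\bigl(-\log d_{\Hyp}(\zeta,\G_\xi(z))\bigr)_+\,d\Leb(z)
\]
uniformly in $(\zeta,\xi)$. Choose any $\phi_\xi\in PSL_2(\R)$ sending $\xi$ to $\infty$; as a hyperbolic isometry it preserves both the integrand and the Liouville measure, maps $\G_\xi$ onto $\G_\infty$ (whose leaves are vertical half-lines $\{x=x_0\}$), and reduces the problem to working with $\zeta'=\phi_\xi(\zeta)$, $K''=\phi_\xi(K')$, and the explicit formula $d_{\Hyp}(\zeta',\{x=x_0\})=\operatorname{arcsinh}(|x_0-\zeta'_x|/\zeta'_y)$.

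The key geometric observation about the moving set $K''$ is that its hyperbolic diameter is still $D$: applied to pairs of points in $K''$, the inequality $(x_1-x_2)^2+(y_1-y_2)^2\leq 2y_1y_2(\cosh D-1)$ yields constants $c_1,c_2>0$ depending only on $D$ such that (i) all imaginary parts of points of $K''$ lie in $[\zeta'_y/c_1,\,c_1\zeta'_y]$, and (ii) the Euclidean diameter of $K''$ is at most $c_2\zeta'_y$. Since the integrand depends only on the $x$-coordinate of $z'$, Fubini yields
\[
\int_{K''}\bigl(-\log d_{\Hyp}(\zeta',\G_\infty(z'))\bigr)_+\,d\Leb(z')=\int_{\R}f(x)N(x)\,dx,
\]
with $f(x)=\bigl(-\log\operatorname{arcsinh}(|x-\zeta'_x|/\zeta'_y)\bigr)_+$ and $N(x)=\int\mathbf{1}_{(x,y)\in K''}\,dy/y^2\leq C_1/\zeta'_y$ uniformly by (i), while the $x$-shadow of $K''$ is contained in an interval of length at most $c_2\zeta'_y$ around $\zeta'_x$ by (ii).

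Finally, the rescaling $t=(x-\zeta'_x)/\zeta'_y$ turns $\int f(x)\,dx$ into $\zeta'_y\int_{|t|\leq c_2}\bigl(-\log\operatorname{arcsinh}|t|\bigr)_+\,dt$, a product of $\zeta'_y$ with a finite universal constant (the integrand has only an integrable $|\log|t||$-singularity at $t=0$). The two powers of $\zeta'_y$ then cancel in $\int f(x)N(x)\,dx$, which is therefore bounded independently of $(\zeta,\xi)$. The delicate point, on which the whole argument rests, is the uniform Euclidean control (i)-(ii) on the moving set $K''$: nothing in the Euclidean picture is a priori invariant under $\phi_\xi$, and one really needs to extract these bounds from the $\cosh D$-inequality; once they are available, everything else is routine calculus.
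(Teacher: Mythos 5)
Your proof is correct and establishes the stated uniform bound, but it takes a route that is genuinely different from the paper's. The paper works intrinsically inside $K'$: it picks, through $\zeta$, a horocycle segment $H_\xi(\zeta)$ centered at $\xi$ whose $L_\xi$-saturation covers $K'$, observes that $\widetilde{\phi}(\cdot,\zeta,\xi)$ is constant along the short geodesic fibers $L_\xi(z)$, disintegrates the Lebesgue (hyperbolic area) measure with respect to the horocycle/geodesic orthogonal coordinates with a uniformly log-bounded Jacobian, and reduces to a one-dimensional integral $\int_{H_\xi(\zeta)}|\log \dist_{horo}(\zeta,s)|\,d\lambda_{\xi,\zeta}(s)$, which is handled by the $C^1$-uniformity of the arc-length parametrizations of the $H_\xi(\zeta)$ plus the integrability of $\log$ at $0$. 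You instead normalize: after killing the $\sup$ over $L_\xi(z)$ by replacing it with the distance to the full geodesic $\G_\xi(z)$ (a valid majorization since $L_\xi(z)\subset\G_\xi(z)$ and $K'$ has finite hyperbolic diameter), you send $\xi$ to $\infty$ by an element of $PSL_2(\R)$, which turns $\G_\xi$ into the pencil of vertical half-lines and gives the explicit formula $d_\Hyp(\zeta',\{x=x_0\})=\operatorname{arcsinh}(|x_0-\zeta'_x|/\zeta'_y)$. The whole weight of the argument is then carried by the observation that the $\cosh D$-inequality $|z_1-z_2|^2\le 2y_1y_2(\cosh D-1)$ for $z_1,z_2$ in the isometric image $K''$ pins $K''$ into a Euclidean box of height comparable to $\zeta'_y$ and width $O(\zeta'_y)$; Fubini together with the rescaling $t=(x-\zeta'_x)/\zeta'_y$ then makes the two factors of $\zeta'_y$ cancel and leaves a universal finite integral of $(-\log\operatorname{arcsinh}|t|)_+$. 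Both approaches ultimately rest on the local integrability of $|\log|$; what yours buys is the avoidance of any discussion of curvature and $C^1$-bounds for horocycle segments, replacing it by the elementary $\cosh D$-inequality, while the paper's version keeps the picture closer to the foliated geometry that the rest of the section uses. One small point you leave implicit: the range of $t$ is contained in $[-c_2,c_2]$ because $\zeta'=\phi_\xi(\zeta)$ itself lies in $K''$ (since $\zeta\in K'$ is assumed), so the $x$-shadow of $K''$ actually contains $\zeta'_x$; it is worth stating, since otherwise the interval of integration is only known to have length $\le c_2\zeta'_y$, not to be centered near $\zeta'_x$.
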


\begin{proof}
The set $K'$ is compact and foliated by the $L_{\xi}(z)$. Hence, passing through each point $\zeta\in\ K'$, there is a segment of horocycle  centered at $\xi$, denoted by $H_{\xi}(\zeta)$, whose length is bounded independently of $\zeta$ and $\xi$ and such that $K'\dans\bigcup_{z\in H_{\xi}(\zeta)}L_{\xi}(z)$.

In the \emph{compact} set $K'$ we have the following facts.

\begin{enumerate}
\item It is well known that given two points $z_1$, $z_2$ that belong to the same horocycle, the horocyclic distance between them, which we denote by $\dist_{horo}(z_1,z_2)$, is given by the following formula:

\begin{equation}
\label{comphorogeo}
\dist_{horo}(z_1,z_2)=2\sinh\frac{\dist_{\Hyp}(z_1,z_2)}{2}.
\end{equation}
Hence for $z,\zeta\in K'$ on the same horocycle centered at $\xi$, the horocyclic and geodesic distances between them are in a uniformly log-bounded ratio.
\item Since the horocycle segments $H_{\xi}(\zeta)$ have bounded lengths and curvatures, their arc length parametrizations are uniformly bounded independently of $\xi$ in the $C^1$-norm.
\item Since moreover each horocycle $H_{\xi}(\zeta)$ is orthogonal to the geodesic segments $L_{\xi}(\zeta)$, we obtain by Fubini that the Lebesgue measure is, when restricted to $K'$, equivalent to the measure obtained by integration of the arc element along the geodesics $L_{\xi}(z)$ against the arc length element along the horocycle $H_{\xi}(\zeta)$, with a Radon-Nikodym derivative which is log-bounded independently of $\zeta,\xi$.
\item The functions $\widetilde{\phi}(.,\zeta,\xi)$ are constant along the $L_{\xi}(z)$ whose lengths are uniformly bounded.
\end{enumerate}

From all this we find a number $C>0$ such that for all $\zeta\in K'$, $\xi\in\R$:
$$\int_{K'}\widetilde{\phi}(z,\zeta,\xi)d\Leb(z)\leq C\int_{H_{\xi}(\zeta)} |\log\,\dist_{horo}(\zeta,s)|\,d\lambda_{\xi,\zeta}(s),$$
where $\lambda_{\xi,\zeta}$ denotes the arc length element of $H_{\xi}(\zeta)$.

Now since the logarithm is integrable at $0$, and since the arc length parametrizations of the horocycles $H_{\xi}(\zeta)$ are $C^1$-uniformly bounded, a change of variable shows that these curve integrals are uniformly bounded.  This finishes the proof of the lemma.
\end{proof}

\subsection{Integrability over the cusps: the model case}

\paragraph{The inclusion.} Recall that a local model for the developing map of a parabolic structure in a cusp is given by the inclusion $\iota:C^+=\left[-\frac{1}{2};\frac{1}{2}\right]\times[1;\infty)\hookrightarrow\C\PP^1$. In order to study the integrability problem over a cusp, we will first treat the analogous problem for this model. In the final paragraph, we will perform a change of coordinate, in order to treat the general case.

In coordinates, the Liouville measure reads in $\Hyp$ as:
$$d\Liouv=k(x+\mathbf{i}y;\xi)\frac{dx\,dy}{y^2}\,d\xi.$$

The goal of this paragraph is to prove that  when $\DD=\iota$, the following integral is finite:
$$\int_{\R}\int_1^{\infty}\frac{1}{y^2}\int_{-\frac{1}{2}}^{\frac{1}{2}}\widetilde{\psi}(x+\mathbf{i}y,\xi) k(x+\mathbf{i}y;\xi) dx\,dy\,d\xi.$$

\paragraph{Remark 1.} Since we know how to prove the integrability of our function inside the compact part, we can enlarge it if necessary and in particular it is enough to study this problem for $y$ lying in the interval $[y_0;\infty)$ for some $y_0\gg 1$: we shall choose this constant later. 

\paragraph{Decomposition of the integral.} We will use bounds on the Poisson kernel in order to decompose the integral into two parts. Recall that the following formula holds for all $x,\xi\in\R$ and $y>0$:
$$k(x+\mathbf{i}y;\xi)=\frac{y}{(x-\xi)^2+y^2}.$$
Choose $y_0$ large enough so that there is a uniform constant $C>1$ such that for every $x\in [-1/2;1/2]$ and $y\in[y_0;\infty)$:
\begin{enumerate}
\item when $|\xi|\geq -1/2$ then:
$$k(x+\mathbf{i}y;\xi)\leq C\frac{y}{\xi^2+y^2};$$
\item when $|\xi|< 1/2$ then:
$$k(x+\mathbf{i}y;\xi)\leq 1.$$
\end{enumerate}

As a consequence, it is enough to prove that the three following integrals are finite:

\begin{eqnarray*}
I_{[-1/2;1/2]}    &=&\int_{-\frac{1}{2}}^{\frac{1}{2}}\int_{y_0}^{\infty}\frac{1}{y^2}\int_{-\frac{1}{2}}^{\frac{1}{2}} \widetilde{\psi}(x+\mathbf{i}y,\xi)  dx\,dy\,d\xi  \\
I_{(-\infty;-1/2]}&=&\int_{-\infty}^{-\frac{1}{2}}\int_{y_0}^{\infty}\frac{1}{y}\frac{1}{\xi^2+y^2}\int_{-\frac{1}{2}}^{\frac{1}{2}} \widetilde{\psi}(x+\mathbf{i}y,\xi)  dx\,dy\,d\xi\\
I_{[1/2;\infty)}  &=&\int_{\frac{1}{2}}^{\infty}\int_{y_0}^{\infty}\frac{1}{y}\frac{1}{\xi^2+y^2}\int_{-\frac{1}{2}}^{\frac{1}{2}} \widetilde{\psi}(x+\mathbf{i}y,\xi)  dx\,dy\,d\xi 
\end{eqnarray*}

We will first have to use a geometric argument in order to bound the integrals of the function $\widetilde{\psi}$ on horizontal slices $[-1/2;1/2]\times\{y\}$, and then conclude by simple calculus.

\paragraph{Pencils of geodesics.} Given real numbers $\xi\in\R$, and $y\geq 1$, we can consider the pencil of geodesics starting at $\xi$ and passing through the horizontal slice $[-1/2;1/2]\times\{y\}$. Denote this pencil by $\G_{\xi}$. Denote by $\G_{\xi}(z)$ the geodesic passing through $\xi$ and $z$. Denote by $L_{\xi}(z)$ the orbit segment $G_{[0;1]}(z,\xi)$. We want to estimate the distance between $\widetilde{s}^+(\xi)$ and the segments $L_{\xi}(x+\mathbf{i}y)$, $x\in[-1/2;1/2]$ and $y\geq 1$. We will be interested in the part of this pencil defined by:
\begin{equation}
\label{axi}
A_{\xi}(y)=\bigcup_{x\in[-1/2;1/2]}L_{\xi}(x+\mathbf{i}y).
\end{equation}

\paragraph{Remark 2.} In these coordinates the function $\widetilde{\psi}$ reads as follows for $z=x+\mathbf{i}y$ with $x\in[-1/2;1/2]$ and $y\geq 1$:
$$\widetilde{\psi}(z,\xi)=|\log\,\dist_{\C\PP^1}(\widetilde{s}^+(\xi),L_{\xi}(z))|.$$

\paragraph{Remark 3.} The set $\lbrace(x+\mathbf{i}y,\xi);x=\xi, y\geq 1\rbrace$ is of zero Liouville measure. Hence in the sequel, we will only consider the case where $x\neq\xi$. In other words, the only (hyperbolic) geodesics we shall consider are euclidian half circles orthogonal to the horizontal axis.

\begin{lemma}
\label{square}
There exists a uniform constant $C_0>1$ such that for $y\geq 1$ and $\xi\in\R$, the set $A_{\xi}(y)$ is included in the part of the complex plane identified with $[-C_0 y;C_0 y]\times[C_0^{-1}y;C_0 y]$.
\end{lemma}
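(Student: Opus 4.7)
The plan is to observe that each segment $L_\xi(z)$ has hyperbolic length at most $1$ starting at $z$, so every point $w$ of it satisfies $\dist_{\Hyp}(z,w) \le 1$. Therefore $A_\xi(y)$ is contained in the union of the closed hyperbolic balls of radius $1$ centered at the points $x+\mathbf{i}y$, $x \in [-1/2,1/2]$, and it suffices to locate these balls in the complex plane.

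A direct computation in the upper half plane model shows that the closed hyperbolic ball of radius $r$ centered at $z_0 = x_0 + \mathbf{i}y_0$ is the closed \emph{euclidean} disc of radius $y_0 \sinh r$ centered at $x_0 + \mathbf{i} y_0 \cosh r$. I would recall this by sending $z_0$ to $\mathbf{i}$ by an isometry of the form $z \mapsto (z-x_0)/y_0$, computing the ball around $\mathbf{i}$ (which is a euclidean disc by symmetry of $PSL_2(\R)_{\mathbf{i}}$ acting on $\Hyp$), and checking that $\mathbf{i}\cosh r \pm \sinh r$ are two antipodal points on its boundary whose hyperbolic distance to $\mathbf{i}$ equals $r$.

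Applying this with $r = 1$, for any $w \in L_\xi(x + \mathbf{i}y)$ with $x \in [-1/2,1/2]$ and $y \ge 1$ one obtains
$$|\re(w) - x| \le y \sinh 1, \qquad y\, e^{-1} = y(\cosh 1 - \sinh 1) \le \im(w) \le y(\cosh 1 + \sinh 1) = y\, e.$$
Since $|x| \le 1/2 \le y/2$, this yields $|\re(w)| \le (\sinh 1 + 1/2)\, y$ and $e^{-1} y \le \im(w) \le e\, y$. Taking $C_0 = \max(\sinh 1 + 1/2,\, e)$ gives the desired inclusion, and this $C_0$ is clearly uniform in $\xi \in \R$ and $y \ge 1$.

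There is no serious obstacle: the argument reduces to the explicit description of hyperbolic discs in the upper half plane, and the hypotheses $x \in [-1/2,1/2]$ and $y \ge 1$ ensure that the perturbation from $x$ is negligible compared with $y$, so the vertical bound is the one doing the real work.
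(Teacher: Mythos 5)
Your proof is correct. You replace the geodesic segment $L_\xi(z)$ by the cruder but entirely sufficient observation that it lies in the closed hyperbolic ball of radius $1$ about $z$, and then invoke the explicit description of hyperbolic balls in the half-plane model: the ball of radius $r$ about $x_0+\mathbf{i}y_0$ is the euclidean disc of radius $y_0\sinh r$ centered at $x_0+\mathbf{i}y_0\cosh r$. The resulting bounds $|\re(w)|\le(\sinh 1+1/2)y$ and $e^{-1}y\le\im(w)\le ey$, valid because $|x|\le 1/2\le y/2$, give $C_0=e$ uniformly.

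This is a genuinely different route from the paper. The paper's proof normalizes by a similitude to place the geodesic on the unit half-circle centered at the origin, then does explicit trigonometry: it integrates $d\theta/\sin\theta$ to express hyperbolic arc length, shows that the ratio $k(\theta)$ of final to initial argument along a unit-length segment is uniformly bounded (splitting into $\theta$ small and $\theta$ in a compact interval), and finally controls $|\cos(s\theta)-\cos\theta|$ and $|\sin(s\theta)-\sin\theta|$ by a Lipschitz estimate. Your approach avoids all of this by not using that the segment is a geodesic at all, only that it has bounded length; it is shorter, more conceptual, and buys generality (it would cover any curve of bounded hyperbolic length). The paper's computation is more hands-on and keeps the argument self-contained without relying on the formula for hyperbolic discs, but it is also more delicate (e.g.\ it has to worry about where the lowest imaginary point of the segment sits). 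For the purposes of this lemma, which only needs a coarse box containment, your route is at least as good.
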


\begin{proof}
It is enough to prove the existence of a constant $C>0$ such that along every geodesic ray $L$ of length $1$, the variation of real and imaginary parts is $\leq C y$, where $y$ is the lowest imaginary part of a point of $L$.

By applying a similitude of the complex plane, which is a hyperbolic isometry, it is enough to assume that the geodesic segment $L$ is included in the half circle centered at $0$ and of euclidian radius $1$. More precisely, we intend to prove that the real and imaginary parts of elements of $G_{[0;1]}(e^{\mathbf{i}\theta},1)$ vary in intervals uniformly of the order of $\sin\theta=\im(e^{\mathbf{i}\theta})$. By symmetry, it is enough to consider the case where $\theta\in(0;\pi/2]$.

First, notice that when $\theta_1<\theta_2$:
$$\dist_{\Hyp}(e^{\mathbf{i}\theta_1},e^{\mathbf{i}\theta_2})=\int_{\theta_1}^{\theta_2}\frac{d\theta}{\sin\theta}=\log\frac{\tan(\theta_2/2)}{\tan(\theta_1/2)}.$$

This implies that if $k(\theta)>1$ is defined in such a way that $\dist_{\Hyp}(e^{\mathbf{i}\theta},e^{\mathbf{i}k(\theta)\theta})=1$, then $k(\theta)$ has to be uniformly bounded from above. In order to see this, use $\tan\theta\sim\theta$ for $\theta$ small, as well as a lower bound $c$ of the derivative $|D(\log\circ\tan)|$, which is uniform in some compact interval $[\eps;\pi/4]$. Using the equality $1=\log [\tan(k(\theta)\theta/2)/\tan(\theta/2)]$ we obtain, for $\eps$ small enough, that $k(\theta)\leq 2e$ for $\theta\in(0;2\eps]$, and $k(\theta)\leq 1+2(c\eps)^{-1}$ for $\theta\in[2\eps;\pi/2]$.

In other words, along a geodesic segment of length $1$ starting at $e^{\mathbf{i}\theta}$ the argument stays uniformly of the order of $\theta$.

By the Lipschitz property, when $s\in[1,k(\theta)]$, we have that $|\cos(s\theta)-\cos\theta|$ and $|\sin(s\theta)-\sin\theta|$ are smaller than $(k(\theta)-1)\theta$. Since moreover $\theta \leq \pi/2\sin\theta$ in $[0;\pi/2]$, we obtain the desired uniform bound.
\end{proof}

Hence it allows us to work in $A_{\xi}(y)$ with euclidian, spherical or hyperbolic metrics indinstincly with a controlled distortion.

\begin{lemma}
\label{conformaldistortion}
There is constant $C_1<1$ such that for every $y\in[1;\infty)$ and $\xi\in\R$, we have:
$$\dist_{\C\PP^1}(z_1,z_2)\geq \frac{C_1}{y}\dist_{\Hyp}(z_1,z_2)\geq \frac{C_1^2}{y^2}|z_1-z_2|,$$
when $z_1,z_2\in A_{\xi}(y)$.
\end{lemma}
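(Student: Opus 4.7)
The plan is to compare the three metrics pointwise on $A_\xi(y)$ using the bounds from Lemma \ref{square}, and then promote these pointwise comparisons to inequalities between actual distances while avoiding integration along paths (which may leave $A_\xi(y)$). From Lemma \ref{square}, any $z = x'+\mathbf{i}y'\in A_\xi(y)$ satisfies $|z|\leq\sqrt{2}\,C_0\,y$ and $C_0^{-1}y\leq y'\leq C_0 y$, so that $1+|z|^2\leq (1+2C_0^2)\,y^2$ whenever $y\geq 1$. A further consequence (which one sees after applying the hyperbolic isometry $z\mapsto z/y$ to normalize to a fixed bounded box) is that the hyperbolic diameter of $A_\xi(y)$ is bounded by a constant $D$ independent of $\xi$ and $y$.

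For the second inequality $\dist_{\Hyp}(z_1,z_2)\geq (C_1/y)|z_1-z_2|$, the plan is to use the closed-form expression
\[
\cosh\dist_{\Hyp}(z_1,z_2)=1+\frac{|z_1-z_2|^2}{2\,\im(z_1)\,\im(z_2)}.
\]
Setting $u=|z_1-z_2|^2/(2\,\im(z_1)\im(z_2))$, the bounded hyperbolic diameter of $A_\xi(y)$ forces $u$ to stay in some compact interval $[0,M]$, on which there is a universal constant $c>0$ with $\mathrm{arccosh}(1+u)\geq c\sqrt{u}$. Combined with $\im(z_j)\leq C_0 y$, this yields the desired lower bound.

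For the first inequality $\dist_{\C\PP^1}(z_1,z_2)\geq (C_1/y)\dist_{\Hyp}(z_1,z_2)$, I would first use that the Fubini--Study distance dominates a multiple of the chordal distance on the Riemann sphere,
\[
\dist_{\C\PP^1}(z_1,z_2)\geq\frac{c'\,|z_1-z_2|}{\sqrt{(1+|z_1|^2)(1+|z_2|^2)}},
\]
which together with $1+|z_j|^2\leq (1+2C_0^2)y^2$ gives $\dist_{\C\PP^1}(z_1,z_2)\geq(\mathrm{const})\cdot|z_1-z_2|/y^2$. Second, applying the universal inequality $\mathrm{arccosh}(1+u)\leq\sqrt{2u}$ to the same closed-form expression produces an upper bound $\dist_{\Hyp}(z_1,z_2)\leq C_0\,|z_1-z_2|/y$, i.e. $|z_1-z_2|\geq (y/C_0)\,\dist_{\Hyp}(z_1,z_2)$. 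Substituting this into the previous display and taking $C_1$ to be the smallest constant that appears closes the argument.

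The main obstacle is conceptual rather than computational: one cannot naively integrate a pointwise conformal comparison $ds_{\C\PP^1}\geq(C_1/y)\,ds_{\Hyp}$ along geodesic paths, because a path realizing $\dist_{\C\PP^1}(z_1,z_2)$ in $\C\PP^1$ may exit the set $A_\xi(y)$ (in particular, it could pass close to $\infty$, where both metrics behave very differently). The chordal-distance bound sidesteps this issue by providing a global lower bound for $\dist_{\C\PP^1}$ in terms of the Euclidean distance that is valid regardless of any particular minimizing path, and the closed-form expression for $\cosh\dist_{\Hyp}$ plays the same role on the hyperbolic side, reducing everything to the elementary asymptotics of $\mathrm{arccosh}(1+u)$ near $0$ and on a compact interval.
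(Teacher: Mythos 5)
Your proof is correct. It is in the same spirit as the paper's argument, which simply points to the conformal factors $1/(1+x^2+y^2)$ and $1/y$ (Fubini--Study and hyperbolic metrics relative to the Euclidean one) and to the box bound of Lemma \ref{square}, and then ``concludes.'' What you add, and what makes your write-up genuinely more careful, is the observation that one cannot directly integrate the pointwise conformal comparison along a distance-realizing path without first knowing that the path stays inside $A_\xi(y)$ (or some controlled enlargement of it). Your fix is to replace that integration by two closed-form, path-free estimates: the chordal lower bound $\dist_{\C\PP^1}(z_1,z_2)\geq |z_1-z_2|/\sqrt{(1+|z_1|^2)(1+|z_2|^2)}$ for the Fubini--Study side, and the identity $\cosh\dist_{\Hyp}(z_1,z_2)=1+|z_1-z_2|^2/\bigl(2\,\im z_1\,\im z_2\bigr)$ together with the two-sided bound $c\sqrt{u}\le\mathrm{arccosh}(1+u)\le\sqrt{2u}$ on $[0,M]$ for the hyperbolic side, where the uniform $M$ comes from the bounded hyperbolic diameter of $A_\xi(y)$ (itself an immediate consequence of Lemma \ref{square} after the isometry $z\mapsto z/y$). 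Chaining these three estimates gives exactly the two asserted inequalities, with the only input from Lemma \ref{square} being $|z_i|\lesssim y$ and $\im z_i\asymp y$. This is a cleaner and fully rigorous way to carry out what the paper's one-line proof gestures at, and it highlights a real (if minor) gap in the paper's phrasing; the paper's argument can also be completed by working in a slightly enlarged box of bounded hyperbolic diameter and splitting into the cases where the competing path does or does not exit it, but your route is more elementary and self-contained.
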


\begin{proof}
The spherical and hyperbolic metrics are conformally equivalent with respect to the euclidian one with conformal factors respectively given by $1/(1+x^2+y^2)$ and $1/y$. A use of Lemma \ref{square} allows us to conclude.
\end{proof}

We need a third lemma which allows us to compare horocyclic and geodesic distance in hyperbolic geometry.

\begin{lemma}
\label{horogeo}
There exists a number $y_0\gg 1$ such that for every $y\geq y_0$, $\xi\in\R$ and $x_1,x_2\in[-1/2;1/2]$ we have:
$$\dist_{\Hyp}(z_1,L_{\xi}(z_2))\leq\dist_{horo}(z_1,L_{\xi}(z_2))\leq 2 \dist_{\Hyp}(z_1,L_{\xi}(z_2)),$$
where $z_k=x_k+\mathbf{i} y$.
\end{lemma}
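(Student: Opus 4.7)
\emph{Plan.} The central idea is to apply the explicit formula \eqref{comphorogeo}---which relates horocyclic and hyperbolic distance between two points on a common horocycle---to the pair $(z_1,w)$, where $w$ is the intersection of the horocycle through $z_1$ centered at $\xi$ with the geodesic $G_{\xi}(z_2)$ extending $L_{\xi}(z_2)$. Since horocycles centered at $\xi$ are orthogonal to geodesics emanating from $\xi$, the point $w$ is the foot of perpendicular from $z_1$ to $G_{\xi}(z_2)$, so $\dist_{\Hyp}(z_1,w)=\dist_{\Hyp}(z_1,G_{\xi}(z_2))$, and formula \eqref{comphorogeo} reads
$$\dist_{horo}(z_1,w)=2\sinh\bigl(\tfrac{1}{2}\dist_{\Hyp}(z_1,w)\bigr).$$

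I would then verify two things. First, that for $y\geq y_0$ sufficiently large the point $w$ actually lies inside the segment $L_{\xi}(z_2)$, so that, writing $d:=\dist_{\Hyp}(z_1,L_{\xi}(z_2))$, one has $\dist_{\Hyp}(z_1,w)=d$ and $\dist_{horo}(z_1,L_{\xi}(z_2))=2\sinh(d/2)$. This is a geometric computation, most transparent in the coordinates of a M\"obius isometry $\phi(z)=-1/(z-\xi)$ sending $\xi$ to $\infty$: in these coordinates $G_{\xi}(z_2)$ is a vertical line through $\tilde z_2=\phi(z_2)$, the horocycle through $z_1$ is the horizontal line at height $\tilde y_1=y/((x_1-\xi)^2+y^2)$, and $L_{\xi}(z_2)$ is a vertical segment of hyperbolic length $1$ adjacent to $\tilde z_2$; the inclusion $w\in L_{\xi}(z_2)$ reduces to controlling the ratio of $\tilde y_k:=y/((x_k-\xi)^2+y^2)$. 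Second, one needs $d$ small enough for the elementary chain $t\leq 2\sinh(t/2)\leq 2t$ to hold (the left inequality is universal from $\sinh u\geq u$; the right is valid for $t$ below a universal constant $t_*>0$, since $\sinh(u)/u\to 1$ as $u\to 0$). The crude bound
$$d\leq \dist_{\Hyp}(z_1,z_2)=2\,\text{arcsinh}\!\left(\tfrac{|x_1-x_2|}{2y}\right)\leq 2\,\text{arcsinh}(1/(2y))$$
tends to zero as $y\to\infty$ uniformly in $\xi\in\R$ and $x_1,x_2\in[-1/2;1/2]$, so $y_0$ is chosen so that this majorant is $\leq t_*$ for every $y\geq y_0$.

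\emph{Main obstacle.} The delicate point is the uniformity in $\xi\in\R$ of the first step: the ratio $\tilde y_1/\tilde y_2=((x_2-\xi)^2+y^2)/((x_1-\xi)^2+y^2)$ depends explicitly on $\xi$, and one must check that, for $y\geq y_0$ sufficiently large, it is close enough to $1$ that $w$ stays within the hyperbolic length-$1$ segment $L_{\xi}(z_2)$. A short case analysis on the magnitude of $|\xi|$ compared with $y$---small, comparable to $y$, and much larger than $y$---using the explicit formulas for $\tilde y_k$ and the bound $|x_1-x_2|\leq 1$, establishes the required uniformity. Once the geometric verification is in place, the two asserted inequalities follow immediately from \eqref{comphorogeo} and the elementary estimates on $\sinh$.
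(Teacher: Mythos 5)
Your overall approach---apply \eqref{comphorogeo} to the pair $(z_1,w)$, where $w$ is the intersection of the horocycle through $z_1$ centered at $\xi$ with the geodesic determined by $\xi$ and $z_2$, then use the elementary chain $t\leq 2\sinh(t/2)\leq 2t$ for $t$ small together with the smallness of $\dist_{\Hyp}(z_1,z_2)$ for $y\geq y_0$---is exactly the paper's, which gives only a terse version of this argument. Your exact formula $\dist_{\Hyp}(z_1,z_2)=2\,\mathrm{arcsinh}\bigl(|x_1-x_2|/(2y)\bigr)$ and the observation that this tends to $0$ uniformly are correct.

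The one step that does not go through as proposed is the verification that $w$ lies \emph{inside} the length-$1$ segment $L_\xi(z_2)$. In the coordinates $\phi(z)=-1/(z-\xi)$, the flow $G_t(\cdot,\xi)$ moves away from $\xi=\infty$, i.e.\ \emph{downward}, so $L_\xi(z_2)$ is a vertical segment \emph{below} $\tilde z_2$, and $w\in L_\xi(z_2)$ forces $\tilde y_1\leq\tilde y_2$, i.e.\ $|x_1-\xi|\geq|x_2-\xi|$. This condition is governed by the sign of $(x_1-x_2)(x_1+x_2-2\xi)$, not by the size of $|\xi|$ relative to $y$: it fails, for instance, for every $\xi>1/2$ once $x_1=1/2$ and $x_2=-1/2$. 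No case analysis on $|\xi|$ can rescue the inclusion, and no enlargement of $y_0$ helps. Fortunately the inclusion is not needed: as the usage in the proof of Lemma \ref{reduceeuclidian} makes clear, both $\dist_{\Hyp}(z_1,L_\xi(z_2))$ and $\dist_{horo}(z_1,L_\xi(z_2))$ denote the (hyperbolic, resp.\ horocyclic) distance from $z_1$ to the \emph{same} horocyclic-projection point $w$ on the full geodesic $G_\xi(z_2)$, irrespective of whether $w$ falls inside the length-$1$ subsegment. With that reading, \eqref{comphorogeo} applies directly to $(z_1,w)$, the quantity $\dist_{\Hyp}(z_1,w)\leq\dist_{\Hyp}(z_1,z_2)\leq 1/y_0$ is uniformly small, and both inequalities follow from your $\sinh$ estimates. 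So: drop the inclusion claim, keep everything else, and the proof is complete and coincides with the paper's.
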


\begin{proof}
Notice that when $y\geq y_0$ and $x_1,x_2\in[-1/2;1/2]$ we have uniformly $\dist_{\Hyp}(x_1+\mathbf{i}y,x_2+\mathbf{i}y)\leq 1/y_0$. Hence using Formula (\ref{comphorogeo}) as well as a uniform Lipschitz constant of $\sinh$ in a neighbourhood of $0$ we conclude that the inequality holds when $y_0$ is large enough.
\end{proof}

\paragraph{Integrals on the horizontal slices.} The following proposition is the main technical ingredient: it will allow us to conclude the proof by simple calculus.

\begin{proposition}
\label{horizontal}
There exists constants $y_0>1$ and $C>0$ such that for every $y\in[y_0;\infty)$ and $\xi\in\R$:
$$\int_{-\frac{1}{2}}^{\frac{1}{2}} \widetilde{\psi}(x+\mathbf{i}y,\xi)  dx \leq C \log(\xi^2+y^2).$$
\end{proposition}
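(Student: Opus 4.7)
The plan is to control $\widetilde{\psi}(x+\mathbf{i}y,\xi)$ by comparing the Fubini-Study distance to the Euclidean distance on $A_\xi(y)$, then to estimate the resulting Euclidean-distance integral via the transverse thickness of the pencil of geodesic arcs $L_\xi(x+\mathbf{i}y)$, $x\in[-1/2,1/2]$. Set $s=\widetilde{s}^+(\xi)\in\C\PP^1$. Since $\C\PP^1$ has finite Fubini-Study diameter, the positive part of $\log d_{\C\PP^1}(s,\cdot)$ is uniformly bounded, so it suffices to bound $\int(-\log d_{\C\PP^1}(s,L_\xi(x+\mathbf{i}y)))^+\,dx\leq C\log(\xi^2+y^2)$.

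The first step will use the chordal formula $d_{ch}(s,w)=|s-w|/\sqrt{(1+|s|^2)(1+|w|^2)}$ combined with Lemma \ref{square}, which ensures $|w|\leq C_0 y$ for $w\in A_\xi(y)$. This produces two cases. If $|s|\gtrsim y$ (including $s=\infty$), a direct computation gives $d_{\C\PP^1}(s,w)\gtrsim 1/y$ uniformly for $w\in A_\xi(y)$, so $\widetilde{\psi}\lesssim\log y$ pointwise and the integral is $\lesssim\log y$. Otherwise $|s|\lesssim y$, and then $\sqrt{(1+|s|^2)(1+|w|^2)}\lesssim y^2$ for $w\in A_\xi(y)$, giving
\[
\widetilde{\psi}(x+\mathbf{i}y,\xi)\;\leq\;-\log\dist_\C(s,L_\xi(x+\mathbf{i}y))+2\log y+O(1).
\]

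It remains to bound $\int_{-1/2}^{1/2}-\log\dist_\C(s,L_\xi(x+\mathbf{i}y))\,dx$. The negative part of the integrand is controlled by $\log(\diam_\C A_\xi(y))=O(\log y)$ via Lemma \ref{square}, while the positive part is handled by the layer-cake identity
\[
\int(-\log\dist_\C)^+\,dx\;=\;\int_0^1\frac{|\{x:\dist_\C(s,L_\xi(x+\mathbf{i}y))<\epsilon\}|}{\epsilon}\,d\epsilon.
\]
To estimate these sublevel measures, I will parametrize the pencil by $\Phi(x,t)=G_t(x+\mathbf{i}y,\xi)$ and compute the component of $\partial_x\Phi|_{t=0}=(1,0)$ orthogonal to the geodesic direction. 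A direct computation with the center $c(x)$ of the circular geodesic ending at $\xi$ gives transverse Euclidean thickness $|\hat{t}_y(x)|=|(\xi-x)^2-y^2|/((\xi-x)^2+y^2)$ at $t=0$; since the horocyclic component of $\partial_x\Phi$ is multiplied only by a bounded factor under the geodesic flow on $t\in[0,1]$ (being the unstable direction with hyperbolic growth $e^t$, tempered by the bounded height ratio of Lemma \ref{square}), the transverse thickness along $L_\xi(x+\mathbf{i}y)$ does not collapse by more than a fixed factor from this initial value.

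The main obstacle will be the degenerate range $\xi\in[\pm y-1/2,\pm y+1/2]$, where $|\hat{t}_y|$ vanishes linearly at some $x_0\in[-1/2,1/2]$: instead of the generic bound $|\{\dist_\C<\delta\}|\lesssim\delta$, one only obtains $|\{\dist_\C<\delta\}|\lesssim\sqrt{y\delta}$ by integrating the transverse thickness $|\hat t_y(x)|\sim|x-x_0|/y$. Still, inserting this weaker estimate into the layer-cake integral yields only an $O(\log y)$ contribution, since $\int_0^{1/y}\sqrt{y/\epsilon}\,d\epsilon=O(1)$ while $\int_{1/y}^{1} d\epsilon/\epsilon=\log y$. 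Assembling everything produces $\int_{-1/2}^{1/2}\widetilde{\psi}(x+\mathbf{i}y,\xi)\,dx=O(\log y)$, which is dominated by the desired $C\log(\xi^2+y^2)$ since $\log y\leq\tfrac12\log(\xi^2+y^2)$ for $y\geq 1$.
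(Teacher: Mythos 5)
Your proposal is correct in its overall plan and takes a genuinely different route from the paper. Both proofs share the opening move: reduce to the case $\widetilde{s}^+(\xi)\in A_\xi(y)$, and pay a price of $O(\log y)$ to pass from the Fubini--Study distance to the Euclidean distance inside the box $A_\xi(y)$ (your ``$\sqrt{(1+|s|^2)(1+|w|^2)}\lesssim y^2$'' plays the role of the paper's Lemma \ref{conformaldistortion}). After that the methods diverge. The paper replaces the arc $L_\xi(z)$ by the whole circle $\G_\xi(z)$ and proves the single geometric inequality $\dist_\C(s_0,\G_\xi(z))\geq|s_0-z|^2/R_x$ via a Lagrange-multiplier argument (Lemma \ref{lagrange}); the logarithm of the right-hand side then integrates immediately, yielding $O(\log(\xi^2+y^2))$. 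You instead use the layer-cake identity and estimate the sublevel sets via the transverse thickness $|\hat t_y(x)|=|(\xi-x)^2-y^2|/((\xi-x)^2+y^2)$ of the pencil. Your computation of $\hat t_y$ (the normal component of $\partial_x\Phi$ at $t=0$, namely $|x-c(x)|/R_x$) is correct, and so is your observation that the flow over $t\in[0,1]$ changes it only by a bounded factor, because the $\partial_x$ direction lies in the center-unstable distribution and Lemma \ref{square} keeps the conformal factor bounded. Your identification of the degenerate range $\xi\in[\pm y-1/2,\pm y+1/2]$ corresponds exactly to the paper's Case~2 (North poles with real part in $[-1/2,1/2]$), and your quadratic collapse $|\hat t_y|\sim|x-x_0|/y$ together with the $\int_0^{1/y}\sqrt{y/\epsilon}\,d\epsilon=O(1)$ estimate recovers $O(\log y)$. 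Interestingly, your bound $O(\log y)$ is sharper than the paper's $O(\log(\xi^2+y^2))$; both are of course sufficient.

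Two technical points are elided in your sketch that the paper handles explicitly and that you would need to address to make this fully rigorous. First, in the degenerate case the parametrization $x\mapsto\G_\xi(x+\mathbf{i}y)$ is $2$-to-$1$ near $x_0$: two distinct $x$, $\rho(x)$ lie on the same geodesic, so the function $x\mapsto\dist_\C(s,L_\xi(x))$ can be small in two separated neighborhoods. The paper handles this with the explicit involution $\rho$ and shows its derivative is uniformly bounded; in your layer-cake argument this only doubles the sublevel measure, so the estimate survives, but it should be stated. Second, the passage from ``the transverse thickness at $t=0$ does not collapse along the arc'' to the quantitative sublevel bound $|\{\dist_\C(s,L_\xi(x))<\epsilon\}|\lesssim\epsilon/\hat t_y$ requires that the distance from $s$ to the arc $L_\xi(x)$ be controlled from below by the accumulated transverse displacement $\int_{x_*}^x\hat t_y(u)\,du$; this is true because the arcs are disjoint and nearly parallel inside $A_\xi(y)$, but it is an extra geometric step. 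Compared to the paper's approach, which compresses the whole geometry into the one inequality $\dist\geq|s_0-z|^2/R_x$ and makes the degenerate case explicit through the involution, your route is more transparent dynamically but leaves more integration-by-fiber bookkeeping to the reader.
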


The function $\widetilde{\psi}$ has been defined as the log of the Fubini-Study distance of $\widetilde{s}^+$ to geodesic segments $L_{\xi}(z)$. The idea of the proof is to control the restriction of this function to horizontal slices, up to logarithmic quantities, by the log of the euclidian distance of the projection of $\widetilde{s}^+(\xi)$ on this horizontal slice. Using the integrability of the logarithm in the neighbourhood of $0$ we will be able conclude the proof.

Before we carry on the proof let us make the following comment: \emph{in order to prove the proposition it is enough to assume that} $\widetilde{s}^+(\xi)\in A_{\xi}(y)$. Indeed we can again distinguish two cases. Either it lies at distance $\geq 1/1000$ of $A_{\xi}(y)$ and $\widetilde{\psi}(x+\mathbf{i}y,\xi)\leq\log(1000)$ so that the estimation stated in Proposition \ref{horizontal} is valid. Or it belongs to the $1/1000$-neighbourhood of $A_{\xi}(y)$ and, by slightly enlarging the interval $[-1/2;1/2]$, we come down to the case $\widetilde{s}^+(\xi)\in A_{\xi}(y)$.

\paragraph{Auxiliary functions.} We will prove Proposition \ref{horizontal} by coming down to a problem of euclidian geometry. In order to do this we need to consider four auxiliary functions.

Assuming $\widetilde{s}^+(\xi)\in A_{\xi}(y)$ for some $y\geq y_0$ and $\xi\in\R$ we consider $s_0$ the projection of $\widetilde{s}^+(\xi)$ on $[-1/2;1/2]\times\{y\}$ along $L_{\xi}(\widetilde{s}^+(\xi))$. Of course there is the possibility that the geodesic segment $L_{\xi}(\widetilde{s}^+(\xi))$ meets $[-1/2;1/2]\times\{y\}$ twice in which case the projection is not well defined. If it occurs, we define $s_0$ as the intersection with the least real part when $\xi<0$, and with greatest real part when $\xi>0$.

We define for $z=x+\mathbf{i}y$, $x\in[-1/2;1/2]$:
\begin{itemize}
\item $\widetilde{\psi}_1(z,\xi)=|\log\,\dist_{\Hyp}(\widetilde{s}^+(\xi),L_{\xi}(z))|$;
\item $\widetilde{\psi}_2(z,\xi)=|\log\,\dist_{\Hyp}(s_0,L_{\xi}(z))|$;
\item $\widetilde{\psi}_3(z,\xi)=|\log\,\dist_{\C}(s_0,L_{\xi}(z))|$;
\item $\widetilde{\psi}_4(z,\xi)=|\log\,\dist_{\C}(s_0,\G_{\xi}(z))|$.
\end{itemize}

Denote by $J_k$ the integral $\int_{-1/2}^{1/2}\widetilde{\psi}_k(x+\mathbf{i}y,\xi)dx$ for $k=1,2,3,4$.

\begin{lemma}
\label{reduceeuclidian}
Let $y\geq y_0$ and $\xi\in\R$ with $\widetilde{s}^+(\xi)\in A_{\xi}(y)$. Then there exist positive constants $C_1,C_2,C_3,C_4$ independent of $\xi$ such that for all $x\in[-1/2;1/2]$:
\begin{eqnarray}
\label{ineq1}
\int_{-1/2}^{1/2}\widetilde{\psi}(x+\mathbf{i}y,\xi)\,dx &\leq& J_1+C_1\log y\\
\label{ineq2}
                                                    &\leq& J_2+C_2\log y\\
\label{ineq3}
                                                    &\leq& J_3+C_3\log y\\
\label{ineq4}
                                                    &\leq& J_4+C_4\log y,
\end{eqnarray}
\end{lemma}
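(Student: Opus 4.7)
Each of the four inequalities represents a small ``change of variable''---from spherical to hyperbolic to Euclidean distance, from the reference point $\widetilde s^+(\xi)$ to $s_0$, and from the length-$1$ segment $L_\xi(z)$ to the full geodesic $\G_\xi(z)$. My plan is to handle three of them by a pointwise estimate on the integrand, and the fourth (the change of reference point) by a split-integration argument, each step absorbing at most one factor of $\log y$.

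For \eqref{ineq1}, Lemma \ref{conformaldistortion} provides $\dist_{\C\PP^1}\ge(C_1/y)\dist_\Hyp$ throughout $A_\xi(y)$. Under the standing assumption $\widetilde s^+(\xi)\in A_\xi(y)$ (the alternative having been disposed of before the statement) and using $L_\xi(z)\subset A_\xi(y)$, taking $|\log|$ of both sides yields the pointwise bound $\widetilde\psi(z,\xi)\le\widetilde\psi_1(z,\xi)+\log y+O(1)$, and integrating over $x$ gives \eqref{ineq1}. The proof of \eqref{ineq3} is analogous, invoking the second half of Lemma \ref{conformaldistortion}, $\dist_\Hyp\ge(C_1/y)|{\cdot}|_\C$, to obtain $\widetilde\psi_2\le\widetilde\psi_3+\log y+O(1)$ pointwise. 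For \eqref{ineq4}, the inclusion $L_\xi(z)\subset\G_\xi(z)$ yields $\dist_\C(s_0,L_\xi(z))\ge\dist_\C(s_0,\G_\xi(z))$; combining this with the trivial upper bound $\log\dist_\C\le\log\diam_\C(A_\xi(y))=O(\log y)$ from Lemma \ref{square} to dispose of the large-distance regime, one obtains $\widetilde\psi_3\le\widetilde\psi_4+O(\log y)$ pointwise.

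The delicate step is \eqref{ineq2}. The definition of $s_0$ forces $s_0\in L_\xi(\widetilde s^+(\xi))$, so $\dist_\Hyp(\widetilde s^+(\xi),s_0)\le 1$, and therefore the two functions $f_w(x):=\dist_\Hyp(w,L_\xi(x+\mathbf{i}y))$ for $w\in\{\widetilde s^+(\xi),s_0\}$ satisfy $|f_{\widetilde s^+}(x)-f_{s_0}(x)|\le 1$. Since $-\log$ is not Lipschitz near zero, this additive control does not translate into a pointwise bound on $\widetilde\psi_1-\widetilde\psi_2$. My plan is to split the integration at $f_{s_0}(x)=2$: on $\{x:f_{s_0}(x)\ge 2\}$ the ratio $f_{\widetilde s^+}/f_{s_0}$ is trapped in $[1/2,2]$, whence $|\log f_{\widetilde s^+}|\le|\log f_{s_0}|+\log 2$; on the complementary set, which is a short neighborhood of the point $x_0$ with $s_0=x_0+\mathbf{i}y$, I would parametrize the pencil $\{\G_\xi(x+\mathbf{i}y)\}_x$ by the other boundary endpoint $\eta\in\R$ and exploit that $f_{\widetilde s^+}(x)$ is comparable to $|\eta-\eta_0|/y$ near the endpoint $\eta_0$ of $\G_\xi(\widetilde s^+(\xi))$, so that $\int_{\{f_{s_0}<2\}}|\log f_{\widetilde s^+}|\,dx$ is of the form $\int_{|\eta-\eta_0|\lesssim y}|\log(|\eta-\eta_0|/y)|\,d\eta/y=O(\log y)$.

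The main obstacle is this last estimate on $\{f_{s_0}<2\}$: it requires making quantitative the fact that two geodesics issuing from a common boundary point of $\Hyp$ separate at a controlled, linear rate in the pencil parameter, and using this to bound the singular contribution to $J_1$ by $O(\log y)$ even though the singular contribution to $J_2$ at the same point may be smaller. Once this is in hand, the remaining three inequalities follow readily from the two halves of Lemma \ref{conformaldistortion} and the inclusion $L_\xi(z)\subset\G_\xi(z)$.
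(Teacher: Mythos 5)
Your treatments of \eqref{ineq1}, \eqref{ineq3}, and \eqref{ineq4} match the paper's: \eqref{ineq1} and \eqref{ineq3} come from the two comparisons in Lemma~\ref{conformaldistortion}, and \eqref{ineq4} from $L_\xi(z)\subset\G_\xi(z)$ (in fact no $\log y$ is needed there, since $\dist_\C(s_0,L_\xi(z))\leq|s_0-z|\leq 1$ already forces both logs to be nonpositive, so the inclusion gives a pointwise inequality for free).

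The genuine gap is in \eqref{ineq2}, and you have correctly diagnosed \emph{why} it is the delicate step: additive control $|f_{\widetilde s^+}-f_{s_0}|\leq 1$ does not pass through the logarithm near zero. What you are missing is that a clean \emph{multiplicative} comparison is available, and it gives a pointwise $O(1)$ bound rather than the $O(\log y)$ after integration that your split-integral scheme aims for. The mechanism is Lemma~\ref{horogeo} plus the exponential behaviour of the horocycle flow under the geodesic flow. Concretely: both $\widetilde s^+(\xi)$ and $s_0$ lie on the \emph{same} geodesic $L_\xi(s_0)=L_\xi(\widetilde s^+(\xi))$ emanating from $\xi$, at hyperbolic distance $\leq 1$ from each other. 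The horocyclic projection of $\widetilde s^+(\xi)$ (resp.\ $s_0$) onto $L_\xi(z)$ slides along the horocycle centred at $\xi$ through $\widetilde s^+(\xi)$ (resp.\ through $s_0$), and moving a distance $\leq 1$ along the geodesic changes the horocyclic distance between the two geodesics by a factor lying in $[e^{-1},e]$. By Lemma~\ref{horogeo}, horocyclic and geodesic distances inside $A_\xi(y)$ are within a factor $2$ for $y\geq y_0$. Composing these comparisons yields $\widetilde\psi_1(z,\xi)\leq\widetilde\psi_2(z,\xi)+O(1)$ pointwise in $x$, so \eqref{ineq2} holds with $C_2\log y$ replaced by a constant. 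This bypasses entirely the ``main obstacle'' you left open (the linear-separation estimate for the pencil near $x_0$), which you would otherwise need to make rigorous before your argument closes.
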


\begin{proof}
Inequality (\ref{ineq1}) follows directly from Lemma \ref{conformaldistortion} where we compare the Fubini-Study and hyperbolic distances inside $A_{\xi}(y)$.

Inequality (\ref{ineq2}) follows from Lemma \ref{horogeo} where it is proven that horocyclic and geodesic distances are comparable in $A_{\xi}(y)$ when $y\geq y_0$. Indeed, for $z=x+\mathbf{i}y$, $x\in[-1/2;1/2]$  the horocyclic projection of $\widetilde{s}^+(\xi)$ (resp. $s_0$) on the geodesic segment $L_{\xi}(z)$ is defined by sliding along the horocyle centered at $\xi$ and passing through $\widetilde{s}^+(\xi)$ (resp. $s_0$) which is both orthogonal to $L_{\xi}(z)$ and $L_{\xi}(\widetilde{s}^+(\xi))=L_{\xi}(s_0)$. Since the geodesic segment $[s_0;\widetilde{s}^+(\xi)]$ has a length bounded by $1$ it means that these two horoyclic distances are in a uniformly bounded ratio, thus proving Inequality (\ref{ineq2}).

Inequality (\ref{ineq3}) also follows from Lemma \ref{conformaldistortion} where we compare the hyperbolic and euclidian distances inside $A_{\xi}(y)$.

The last inequality is trivial.
\end{proof}

\paragraph{Euclidian geometry.} Lemma \ref{reduceeuclidian} enables us to deal with euclidian orthogonal projections on hyperbolic geodesics which we recall are euclidian half circles. It is very easy to compute euclidian radii of the geodesics $\G_{\xi}(x+\mathbf{i}y)$ and to see that in particular they are uniformly bounded from below independently of $\xi,x\in[-1/2;1/2]$ and $y\geq y_0$.

\begin{lemma}
\label{euclidianradius}
Let $x\in[-1/2;1/2]$, $y\geq y_0$ and $\xi\in\R$. Then the euclidian radius of the geodesic $\G_{\xi}(x+\mathbf{i}y)$ is given by
\begin{equation}
\label{radius}
R_x=\frac{(\xi-x)^2+y^2}{2|\xi-x|}.
\end{equation}
In particular we always have $R_x\geq y_0$.
\end{lemma}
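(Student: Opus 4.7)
The plan is to treat this as an elementary computation in euclidean geometry. Recall that hyperbolic geodesics in the upper half-plane which are not vertical lines are precisely euclidean semi-circles centered on the real axis and orthogonal to it; the geodesic $\G_\xi(x+\mathbf{i}y)$ is the one passing through the boundary point $\xi$ and through the interior point $x+\mathbf{i}y$. Since the condition that $x \neq \xi$ has already been ensured (Remark 3), this geodesic is indeed of the euclidean semi-circle type.

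First I would let $(c,0)$ be the center of this semi-circle and $R_x$ its euclidean radius. The two equations expressing that $\xi$ and $x+\mathbf{i}y$ lie on the circle read
\begin{equation*}
(c-\xi)^2 = R_x^2, \qquad (c-x)^2 + y^2 = R_x^2.
\end{equation*}
Subtracting these gives a linear equation in $c$, which solves to
\begin{equation*}
c = \frac{x+\xi}{2} + \frac{y^2}{2(x-\xi)}.
\end{equation*}
Substituting back yields
\begin{equation*}
R_x = |c-\xi| = \left|\frac{x-\xi}{2} + \frac{y^2}{2(x-\xi)}\right| = \frac{(\xi-x)^2+y^2}{2|\xi-x|},
\end{equation*}
the two terms having the same sign, which establishes formula \eqref{radius}.

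For the lower bound, I would simply invoke the AM-GM inequality: $(\xi-x)^2 + y^2 \geq 2|\xi-x|\,y$, so
\begin{equation*}
R_x \geq \frac{2|\xi-x|\,y}{2|\xi-x|} = y \geq y_0.
\end{equation*}
There is no real obstacle here; this lemma is a straightforward computation whose only purpose is to record the explicit formula \eqref{radius} together with the uniform lower bound $R_x \geq y_0$, both of which will be used to estimate the euclidean orthogonal projection onto $\G_\xi(x+\mathbf{i}y)$ in the next steps.
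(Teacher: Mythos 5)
Your proof is correct and follows essentially the same approach as the paper's: an elementary Pythagorean/circle-equation computation for the radius formula, followed by AM-GM for the lower bound $R_x \geq y_0$. The only cosmetic difference is that the paper first reduces to $x=0$ by a horizontal translation (an isometry) and then applies Pythagoras to the right triangle with vertices $0$, $\mathbf{i}y$, and the euclidean center, whereas you keep $x$ general and subtract the two circle equations; these are interchangeable.
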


\begin{proof}
We will prove it for $x=0$ and $\xi>0$. Using Pythagoras' theorem in the triangle whose vertices are $0$, $\mathbf{i}y$ and the euclidian center of the geodesic gives the following relation:
$$R^2=(R-\xi)^2+y^2,$$
from which first assertion of the lemma follows easily.

In order to see that we always have $R\geq y_0$ we apply the inequality of arithmetic and geometric means to $y^2$ and $\xi^2$ and remember that $y\geq y_0$.
\end{proof}

We will use the following lemma which compares the orthogonal projection of a point on the unit circle with its projection in the horizontal direction. Consider the first quadrant $S^+=\lbrace z\in\C;|z|=1,\,\re(z)\geq 0,\im(z)\geq 0\rbrace$, and for a small $\eps$, consider $S^+(\eps)=\lbrace z\in S^+;\re(z)\geq\eps\rbrace$. Consider also the images $S^-$ and $S^-(\eps)$ of these sets by the reflection $z\mapsto -\bar{z}$.

\begin{lemma}
\label{lagrange}
If $\eps$ is small enough we have:
\begin{enumerate}
\item for every $z\in S^{\pm}$:
$$\dist_{\C}(z\pm\eps,S^1)\geq \eps^2.$$
\item for every $z\in S^{\pm}(\eps)$:
$$\dist_{\C}(z\mp\eps,S^1)\geq \eps^2.$$
\end{enumerate}
\end{lemma}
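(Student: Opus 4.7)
The statement is a concrete Euclidean fact: the unit circle $S^1$ is, to second order, a parabola along its tangent direction, so a horizontal translation of size $\eps$ takes a point of $S^1$ to a point at distance of order $\eps^2$ from $S^1$, the sign of the distance being dictated by whether the translation points outward (case (1)) or inward (case (2)). The plan is to reduce to the first quadrant by symmetry, and then to read off the estimate from the elementary expansion of $|z\pm\eps|^2$.

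First I would use the reflection $z\mapsto -\bar z$, which fixes $S^1$, exchanges $S^+$ and $S^-$, and sends $z\pm\eps$ to $-\bar z\mp\eps$; this identifies the two sub-cases $z\in S^{\pm}$ with each other, so it suffices to treat $z\in S^+$. I would parametrize such a $z$ as $z=\cos\theta+\mathbf{i}\sin\theta$ with $\theta\in[0,\pi/2]$, and record the basic identity
\[
|z\pm\eps|^2 \;=\; 1\,\pm\,2\eps\cos\theta\,+\,\eps^2,
\]
together with $\dist_\C(w,S^1)=\bigl||w|-1\bigr|$.

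For assertion (1) one has $\cos\theta\geq 0$, so $|z+\eps|^2\geq 1+\eps^2$ and hence $\dist_\C(z+\eps,S^1)\geq \sqrt{1+\eps^2}-1$, which is of order $\eps^2$ for small $\eps$. For assertion (2) the stronger hypothesis $\cos\theta\geq\eps$ (which is exactly what $z\in S^+(\eps)$ says) gives $|z-\eps|^2\leq 1-\eps^2$, and therefore $\dist_\C(z-\eps,S^1)\geq 1-\sqrt{1-\eps^2}$, again of order $\eps^2$ once $\eps$ is small enough.

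There is no real obstacle here, only a routine verification. The single point to watch is the role of the hypothesis $z\in S^{\pm}(\eps)$ in assertion (2): it is precisely what prevents the translated point $z\mp\eps$ from crossing the $y$-axis and landing on the far side of $S^1$, in which case the sign of the linear term in the expansion would flip and the quadratic lower bound would be lost.
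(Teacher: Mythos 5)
Your proof is correct, and it takes a genuinely different and more elementary route than the paper's. The paper invokes Lagrange multipliers: it considers $f(z)=\dist_\C(z+\eps,S^1)$ on the arc $S^+$ with constraint $g(z)=|z|^2-1$, observes that $\nabla f$ and $\nabla g$ are colinear only where $z$ and $z\pm\eps$ are colinear (i.e., at $z=1$), concludes that there are no interior critical points, and so evaluates the distance at the arc endpoints, with $f(\mathbf{i})=\sqrt{1+\eps^2}-1$ giving the minimum. Your approach sidesteps the calculus entirely: writing $z=\cos\theta+\mathbf{i}\sin\theta$ and using $\dist_\C(w,S^1)=\bigl||w|-1\bigr|$ together with the identity $|z\pm\eps|^2=1\pm 2\eps\cos\theta+\eps^2$ yields the same bounds $\sqrt{1+\eps^2}-1$ and $1-\sqrt{1-\eps^2}$ directly and uniformly, with the hypothesis $\cos\theta\geq\eps$ for part (2) feeding in cleanly as $|z-\eps|^2\leq 1-\eps^2$. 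Your version is shorter, uses only algebra, and treats assertion (2) explicitly (the paper merely remarks that the Lagrange argument ``is again valid'' without re-running the endpoint evaluation). One shared cosmetic imprecision, present in the paper as well: both extremal values are asymptotically $\eps^2/2$, not $\geq\eps^2$ as the lemma literally states, so the inequality should really carry a constant (e.g.\ $\eps^2/4$); this is harmless for the application since Proposition \ref{horizontal} passes to the logarithm and absorbs uniform multiplicative constants.
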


\begin{proof}
Let us show the first assertion. By symmetry it is enough to prove the statement when $z\in S^+$.

Consider the function of the complex variable $f(z)=\dist_{\C}(z+\eps,S^1)$ as well as the constraint function $g(z)=|z|^2-1$. The function $f$ is smooth on $S^+$ (since it does not vanish) and $g$ is smooth everywhere.

By the theory of Lagrange multipliers, if an interior point of the arc $S^+$ is a local extremum of $f$ then the gradients $\nabla f$ and $\nabla g$ are colinear at this point. But for every $z\in S^+$, $\nabla_z g$ is colinear to the vector $z$ and $\nabla_zf$ is colinear to the vector $z+\eps$. In other words if $z$ is a local extremum then $z$ and $z+\eps$ are colinear: this is only possible if $z=1$. Finally we find that there is no local extremum in the interior of $S^+$.

Hence the extrema of the restriction of $f$ to the arc $S^+$ are precisely its extremities. But by Pythagoras' theorem $f(\mathbf{i})=\sqrt{1+\eps^2}-1\sim\eps^2/2<\eps=f(1)$ for $\eps$ small enough. Hence when $\eps$ is small enough one has for all $z\in S^+$, $f(z)\geq f(\mathbf{i})\geq \eps^2$.

The second assertion follows by the same type of arguments. Indeed, because when $z\in S^+(\eps)$, $z-\eps\notin S^1$, we have that the function $h(z)=\dist_{\C}(z-\eps,S^1)$ is smooth on $S^+(\eps)$, and the argument of Lagrange multipliers is again valid.
\end{proof}

\paragraph{Proof of Proposition \ref{horizontal}: case 1.} Call \emph{North pole} of a circle of $\C$ its point with highest imaginary part. Choose $y\geq y_0$ and $\xi\in\R$. For $x\in[-1/2;1/2]$, we shall denote by $N(x)$ the North pole of $\G_{\xi}(x+\mathbf{i}y)$. The first case we treat is the following.

\begin{center}
$(\ast)\,\,\,\,$ \emph{All North poles of geodesics starting at $\xi$ and passing through} $(-1/2;1/2)\times\{y\}$ \emph{have real part outside of} $[-1/2;1/2]$.
\end{center}

In particular, in that case every geodesic of the pencil $\G_{\xi}$ passing through the horizontal slice $[-1/2;1/2]\times\{y\}$ intersects it only once. Note that, at least when $y_0$ is large, this case includes the case $\xi\in[-1/2;1/2]$.

Let $z=x+\mathbf{i}y$. Recall that $s_0$ is the projection on $[-1/2;1/2]\times\{y\}$ of $\widetilde{s}^+(\xi)$ along $L_{\xi}(\widetilde{s}^+(\xi))$. By Lemma \ref{euclidianradius} the euclidian radius of the geodesic $\G_{\xi}(z)$ is given by:
$$R_x=\frac{(\xi-x)^2+y^2}{2|\xi-x|}.$$

Choose $y_0$ large enough so that Lemma \ref{lagrange} is valid with $\eps<1/y_0$. Call $S$ the half circle of euclidian radius $1$ obtained by dividing $\G_{\xi}(z)$ by $R_x$.

Recall the definition of $S^+(\eps)$, with the obvious generalization to circles which are not centered at the origin. A point $z\in S$ lies in $S^+(\eps)$ if the real part of $z-\eps$ is more than or equal to that of the North Pole. We have defined $S^-(\eps)$ by applying a reflection with respect to a vertical axis.

 By the hypothesis we made on the North poles and since the segment with extremities $z$ and $s_0$ is included in $[-1/2;1/2]\times\{y\}$, we have $z/R_x\in S^{\pm}(|s_0-z|/R_x)$ (the sign depending of the relative position of $z$ and the North pole $N(x)$). Hence by use of Lemma \ref{lagrange} with $\eps=|s_0-z|/R_x$:
$$\dist_{\C}\left(\frac{s_0}{R_x},S\right)=\dist_{\C}\left(\frac{z}{R_x}+\frac{s_0-z}{R_x},S\right)\geq \left(\frac{|s_0-z|}{R_x}\right)^2.$$
Multiplying by $R_x$ gives  
$$1\geq|s_0-z|\geq\dist_{\C}(s_0,\G_{\xi}(z))\geq  \frac{|s_0-z|^2}{R_x}.$$
Passing to the logarithm and using the fact that $(\xi-x)^2+y^2$ is in uniformly log-bounded ratio with $\xi^2+y^2$ gives a constant $C>0$ such that:
$$|\log\,\dist_{\C}(s_0,\G_{\xi}(z))|\leq C+2\left|\log|s_0-z|\right|-\log^-|\xi-x|+\log(\xi^2+y^2),$$
where $\log^-(\xi)=\Min(\log\xi,0)$ is the negative part of the logarithm.

Now integrate this inequality against the variable $x\in[-1/2;1/2]$. On the one hand we have that $\int_{-1/2}^{1/2}\left|\log|s_0-(x+\mathbf{i}y)|\right|dx$ is bounded independently of $s_0$. On the other hand $\log^-|\xi-x|\neq 0$ for some $x\in[-1/2;1/2]$ only if $\xi\in[-3/2;3/2]$. But the integral of the logarithm on an interval of length $1$ inside $[-3/2;3/2]$ is uniformly bounded.

Finally we find a constant $C'>0$  such that for all $y\geq y_0$ and $\xi\in\R$ such that Hypothesis $(\ast)$ holds, we have:
$$J_4=\int_{-1/2}^{1/2}|\log\,\dist_{\C}(s_0,\G_{\xi}(x+\mathbf{i}y))|\,dx\leq C'+\log(\xi^2+y^2).$$

By Lemma \ref{reduceeuclidian} we can conclude the proof of Proposition \ref{horizontal} in the first case.\quad \hfill $\square$

\paragraph{Proof of Proposition \ref{horizontal}: case 2.} It remains to treat the following case:

\begin{center}
$(\ast\ast)\,\,\,\,$\emph{There exists} $x_1\in[-1/2;1/2]$ \emph{such that} $\re(N(x_1))\in[-1/2;1/2]$.
\end{center}

By symmetry, it is enough to treat the case $\xi>1/2$, (the case $|\xi|\leq 1/2$ has already been treated in the previous paragraph). Before we show how to deal with Hypothesis $(\ast\ast)$, let us assume the following hypothesis, which is more restrictive.

\begin{center}
$(\ast\ast')\,\,\,\,$\emph{Every  geodesic} $\G_{\xi}(x+\mathbf{i} y)$, $x\in [-1/2;1/2]$ \emph{intersects the horizontal slice} $[-1/2;1/2]\times\{y\}$ \emph{exactly twice, except one which is tangent to the slice}.
\end{center}

In that case, we have an involution $\rho:[-1/2;1/2]\to[-1/2;1/2]$ which associates to $x$ the real part of the other intersection with $[-1/2;1/2]\times\{y\}$ of the geodesic $\G_{\xi}(x+\mathbf{i} y)$. Call $x_0$ the fixed point of this involution: $\G_{\xi}(x_0+\mathbf{i} y)$ is tangent to $[-1/2;1/2]\times\{y\}$.

\begin{lemma}
Assume that Hypothesis $(\ast\ast')$ holds. The involution $\rho$ is smooth and its derivative is bounded independently of $x\in [-1/2;1/2]$, $y\geq y_0$ and $\xi$.
\end{lemma}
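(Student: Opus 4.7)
The plan is to write $\rho$ down explicitly by an elementary computation on the Euclidean semicircles $\G_{\xi}(x+\mathbf{i}y)$, and then just differentiate.

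First I would locate the Euclidean center of $\G_{\xi}(x+\mathbf{i}y)$ on the real axis. Since the circle passes through both $\xi$ and $x+\mathbf{i}y$ with center $c$ real, the equality $(c-\xi)^{2}=(c-x)^{2}+y^{2}$ gives
$$c(x)=\frac{x+\xi}{2}-\frac{y^{2}}{2(\xi-x)}.$$
The two intersections of this circle with the horizontal line $\{\im =y\}$ are symmetric about the vertical line $\{\re =c(x)\}$, so $\rho(x)=2c(x)-x$, i.e.
$$\rho(x)=\xi-\frac{y^{2}}{\xi-x}.$$
By the symmetry invoked just before the statement, it is enough to treat $\xi>1/2$; then $\xi-x\geq \xi-1/2>0$ on $[-1/2;1/2]$, so $\rho$ is smooth and
$$\rho'(x)=-\frac{y^{2}}{(\xi-x)^{2}}.$$

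It then remains to bound the right-hand side uniformly in $x,y,\xi$. The tangency condition in $(\ast\ast')$ produces a fixed point $x_{0}\in[-1/2;1/2]$ of $\rho$ satisfying $(\xi-x_{0})^{2}=y^{2}$; since $\xi-x_{0}>0$ this forces $y=\xi-x_{0}$, and in particular $\xi\in[y-1/2;\,y+1/2]$. For every $x\in[-1/2;1/2]$ we then get
$$y-1\leq \xi-x\leq y+1,$$
so that for $y\geq y_{0}$ with $y_{0}$ chosen large enough (say $y_{0}\geq 2$), the estimate $|\rho'(x)|\leq y^{2}/(y-1)^{2}$ provides a bound independent of $x$, $y$ and $\xi$.

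I do not expect any serious obstacle: everything boils down to the closed-form expression for $\rho$ and a one-line estimate of $\rho'$. The only delicate point is to recognize that under $(\ast\ast')$ the parameters $y$ and $\xi$ are essentially locked together by the tangency condition, and that this is precisely what allows the $y^{2}$ in the numerator of $\rho'(x)$ to be absorbed.
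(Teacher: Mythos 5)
Your proof is correct and follows essentially the same approach as the paper: you derive the closed form $\rho(x)=\xi-\dfrac{y^{2}}{\xi-x}$ directly from the Euclidean center, whereas the paper writes $\rho=\tau_{x}^{-1}\circ\rho_{0}\circ\tau_{x}$ and reduces to bounding $dR_{x}/dx$ from the radius formula — but after simplification these give the same expression for $\rho'$, and both use the tangency condition $y=\xi-x_{0}$ together with $|x-x_{0}|\leq 1$ in exactly the same way. (Incidentally, your explicit computation $\rho'(x)=-y^{2}/(\xi-x)^{2}$ is consistent with $dR_{x}/dx=-\tfrac{1}{2}+\tfrac{y^{2}}{2(\xi-x)^{2}}$, so the paper's displayed value $3/2+\tfrac{y^{2}}{2(\xi-x)^{2}}$ appears to carry a sign typo; the conclusion is unaffected.)
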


\begin{proof}
Consider the reflection $\rho_0:x\in[-1/2;1/2]\mapsto -x$. For $x\in[-1/2;1/2]$, consider the map $\tau_x$ defined as the translation $z\mapsto z+(R_x-\xi)$. Since $\xi> 1/2$, it is easily seen that if for each $x$ we apply $\tau_x$ to $\G_{\xi}(x+\mathbf{i}y)$ we obtain a family of concentric circles whose common center is the origin.

Hence one shows $\rho(x)=\tau_x^{-1}\circ\rho_0\circ\tau_x(x)$. Finally $\rho$ is smooth and in order to bound its derivative, it is enough to bound that of $x\mapsto R_x$.

Yet it is obvious from \eqref{radius} that $\frac{dR_x}{dx}=3/2+\frac{y^2}{2(\xi-x)^2}$ (use here that $\xi>1/2\geq x$). Since by definition of $x_0$, we have $y=\xi-x_0\geq y_0$ which is large enough, and $|x-x_0|\leq 1$, we obtain that $\xi$ is far from $1/2$ so that $\left(\frac{\xi-x_0}{\xi-x}\right)^2$, and hence the derivative of $R_x$, is clearly bounded independently of $x,\xi$ and $y\geq y_0$. This concludes the proof of the lemma.
\end{proof}

Now, assuming Hypothesis $(\ast\ast')$, we can cut the integral below into two parts, and then perform a change of variable $x'=\rho(x)$ to one of the pieces. Using the lemma above as well as the fact that $\dist_{\C}(s_0,\G_{\xi}(x+\mathbf{i}y))=\dist_{\C}(s_0,\G_{\xi}(\rho(x)+\mathbf{i}y))$ we find a constant $C>0$ such that:
$$\int_{-1/2}^{1/2}|\log\,\dist_{\C}(s_0,\G_{\xi}(x+\mathbf{i}y))|dx\leq C\int_{x_0}^{1/2}|\log\,\dist_{\C}(s_0,\G_{\xi}(x+\mathbf{i}y))|dx.$$

Now, since $\xi>1/2$, the North poles $N(x)$, $x\in[-1/2;1/2]$ have real parts outside $(x_0;1/2]$ and we can end the proof under this hypothesis as we did under Hypothesis $(\ast)$.\quad \hfill $\square$\\

Now we show that assuming Hypothesis $(\ast\ast)$, it is possible to come down to Hypothesis $(\ast\ast')$ by enlarging the horizontal slice $[-1/2;1/2]\times\{y\}$. This is object of the following:

\begin{lemma}
Assume that Hypothesis $(\ast\ast)$ holds for some $x_1\in[1/2;1/2]$, $y\geq y_0$ and $\xi\in\R$. Then, there exists an interval $I=I(y)$ containing $[-1/2;1/2]$, with length bounded independently of $y$ and $\xi$, such that every geodesic $\G_{\xi}(x+\mathbf{i}y)$, $x\in[-1/2;1/2]$, intersects exactly twice $I\times\lbrace y\rbrace$, except one which is tangent to the slice.
\end{lemma}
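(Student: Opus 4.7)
The idea is to reduce everything to elementary Euclidean geometry of half-circles orthogonal to $\R$. By the symmetry $\xi \mapsto -\xi$ I may assume $\xi > 1/2$; the case $|\xi| \leq 1/2$ falls under Hypothesis $(\ast)$ once $y_0$ is large, as was noted just after its statement. Writing $u = \xi - x > 0$, the geodesic $\G_\xi(x+\mathbf{i}y)$ is by Lemma \ref{euclidianradius} the half-circle orthogonal to $\R$ centered at $\xi - R_x$ with $R_x = u/2 + y^2/(2u)$. The function $u \mapsto R(u)$ is strictly convex with unique minimum $R = y$ at $u=y$, so $\G_\xi(x+\mathbf{i}y)$ meets the horizontal line $\{\im = y\}$ in at most two points and is tangent to it exactly when $x = x_0 := \xi - y$. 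Reflecting $x$ across the abscissa of the center gives the second intersection explicitly as
\[
\rho(x) = \xi - \frac{y^2}{\xi - x},
\]
and one checks directly that $\rho$ is a smooth decreasing involution of $(-\infty,\xi)$ whose unique fixed point is $x_0$.

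The central quantitative step is to translate Hypothesis $(\ast\ast)$ into a comparison between $y$ and $\xi$. The condition $\re(N(x_1)) \in [-1/2;1/2]$ means $R_{x_1} \in [\xi - 1/2,\xi + 1/2]$; inserting this into $R(u_1) = u_1/2 + y^2/(2u_1)$ with $u_1 = \xi - x_1 \in [\xi - 1/2,\xi + 1/2]$ and solving yields the two-sided bound
\[
\xi^2 - \xi - 3/4 \;\leq\; y^2 \;\leq\; \xi^2 + \xi + 1/4.
\]
Since $y \geq y_0$ and $y \leq \xi + 1/2$, this forces $\xi$ to be as large as we wish once $y_0$ has been chosen large enough, and therefore $y^2/(\xi^2 - 1/4) \leq C$ for a universal constant $C$, together with uniform bounds on $\rho(\pm 1/2) = \xi - y^2/(\xi \mp 1/2)$. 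This is the whole point of Hypothesis $(\ast\ast)$: it forces $y$ and $\xi$ to be comparable up to an additive $O(1)$.

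I then define
\[
I \;=\; \bigl[\min(-1/2,\rho(1/2)),\; \max(1/2,\rho(-1/2))\bigr],
\]
which evidently contains $[-1/2;1/2]$ and has length uniformly controlled by the previous paragraph. A case check on the sign of $y^2 - (\xi^2 - 1/4)$ shows that at most one of $\rho(1/2) < -1/2$ or $\rho(-1/2) > 1/2$ can occur (they translate into the incompatible conditions $y^2 > \xi^2 - 1/4$ and $y^2 < \xi^2 - 1/4$), and in every case the endpoints of $I$ are exchanged by $\rho$, so $\rho(I) = I$. Finally, for every $x \in [-1/2;1/2]$ the two intersections of $\G_\xi(x+\mathbf{i}y)$ with $\{\im = y\}$ are $x+\mathbf{i}y$ and $\rho(x)+\mathbf{i}y$, both in $I \times \{y\}$; they coincide precisely when $x = x_0$, providing the tangent geodesic exactly when $x_0 \in [-1/2;1/2]$.

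The only genuinely non-routine step is the quantitative estimate $y \approx \xi$ in the second paragraph: it is the unique place where the geometric condition $(\ast\ast)$ is used in an essential way, and it is what forces both the length of $I$ and the distance of $\rho([-1/2;1/2])$ to $[-1/2;1/2]$ to remain uniformly bounded as $y,\xi \to \infty$. Once this comparison is in hand, everything else is monotonicity of $\rho$ and elementary bookkeeping.
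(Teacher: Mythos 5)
Your argument is correct and rests on the same quantitative pivot as the paper's: Hypothesis $(\ast\ast)$ forces $y$ and $\xi$ to be comparable up to an additive $O(1)$, and once that is known the interval containing the reflections stays uniformly bounded. Where the paper phrases this as the existence of a tangency abscissa $x_0 \in [-3/2;3/2]$ with $y = \xi - x_0$, and then concludes by bounding $\frac{dR_x}{dx}$ (noting that after the translations $\tau_x$ the geodesics become concentric circles, so $|\re(N_x)-\re(N_{x'})| = |R_x - R_{x'}|$), you instead translate $(\ast\ast)$ directly into the two-sided estimate $\xi^2 - \xi - 3/4 \le y^2 \le \xi^2 + \xi + 1/4$ and then evaluate the reflection $\rho(\pm 1/2) = \xi - y^2/(\xi \mp 1/2)$ explicitly. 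The two are logically interchangeable; your version is slightly more computational but gives an explicit formula for $I$, while the paper's derivative bound is a little more geometric and recycles the computation of $\frac{dR_x}{dx}$ from the preceding lemma. One small inaccuracy: when $I = [-1/2;1/2]$ (i.e.\ neither $\rho(1/2) < -1/2$ nor $\rho(-1/2) > 1/2$), $\rho(I)$ is a proper subset of $I$ rather than equal to it, so "the endpoints of $I$ are exchanged by $\rho$" does not hold in that sub-case; but since all you actually need is $\rho([-1/2;1/2]) \subset I$, which your definition of $I$ guarantees in all cases, this does not affect the conclusion.
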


\begin{proof}
The geodesic $\G_{\xi}(x_1+\mathbf{i}y)$ intersects twice the slice $[-3/2;3/2]\times\{y\}$. This implies in particular that there exists $x_0\in [-3/2;3/2]$ such that $\G_{\xi}(x_0+\mathbf{i}y)$ is tangent to $\lbrace z;\im(z)=y\rbrace$. If one prefers, $y=R_{x_0}=\xi-x_0$ (recall that $\xi>0$).

Now recall that when we apply the translation $\tau_x$ to the geodesics $\G_{\xi}(x+\mathbf{i}y)$ we get concentric circles. Thus we have for every $x,x'\in[-1/2;1/2]$, $|\re(N_x)-\re(N_{x'})|=|R_x-R_{x'}|\leq\Sup_{x\in[-1/2;1/2]}\frac{dR_x}{dx}.$ But we have already computed this derivative, and since $y=\xi-x_0\geq y_0$ for some $x_0\in [-3/2;3/2]$, we see that when $y_0$ is large enough, this derivative is uniformly bounded from above independently of $y\geq y_0$ and $\xi$.

This implies that all the North poles $N(x)$ have their real parts in an interval of uniformly bounded length: the geodesics $\G_{\xi}(x+\mathbf{i}y)$ intersect twice an interval which is at most twice bigger. We can conclude the proof of the lemma.
\end{proof}

This lemma proves that up to replacing the interval $[-1/2;1/2]$ by some interval $I$ \emph{of uniform length}, we are reduced to Hypothesis $(\ast\ast')$, which we just treated. This ends the proof of Proposition \ref{horizontal}. \quad \hfill $\square$

\paragraph{End of the proof of the integrability.} We will now show how to finish the proof of the integrability using Proposition \ref{horizontal}.

We will bound individually each of the three integrals defined above. Firstly, using a bound $\log(\xi^2+y^2)\leq C'\log(y)$ when $\xi\in[-1/2;1/2]$ we find:

$$I_{[-1/2;1/2]}\leq C\int_{-\frac{1}{2}}^{\frac{1}{2}}\left(\int_{y_0}^{\infty}\frac{\log(\xi^2+y^2)}{y^2}\,dy\right)d\xi\leq CC'\int_{y_0}^{\infty}\frac{\log\,y}{y^2}dy<\infty.$$

Secondly, by Proposition \ref{horizontal}:

\begin{eqnarray*}
I_{[1/2;\infty)}&\leq& C\int_{1/2}^{\infty}\int_{y_0}^{\infty}\frac{1}{y}\frac{\log(\xi^2+y^2)}{\xi^2+y^2}\,dy\,d\xi\\
                  &=&   C\int_{1/2}^{\infty}\int_{y_0}^{\infty}\frac{1}{y^3}\frac{\log((\xi/y)^2+1)+2\log y}{(\xi/y)^2+1}\,dy\,d\xi.
\end{eqnarray*}

Using a change of variable $u=\xi/y$ as well as the integrability on $[0;\infty)$ of the functions $u\mapsto\log(u^2+1)/(u^2+1)$ and $u\mapsto 1/(u^2+1)$ we obtain a constant $C'>0$ such that for every $y\geq y_0$:
$$\int_{1/2}^{\infty} \frac{\log((\xi/y)^2+1)}{(\xi/y)^2+1}\,d\xi,\,\int_{1/2}^{\infty} \frac{1}{(\xi/y)^2+1}\,d\xi\leq C'y.$$

Thus we find:
$$I_{[1/2;\infty)}\leq CC' \int_{y_0}^{\infty}\frac{1+2\log y}{y^2}dy<\infty.$$

Finally, a similar argument allows us to show that $I_{(-\infty,-1/2]}$ is finite, and this concludes the proof of Proposition \ref{integrability}, in the very particular case when $\DD$ is assumed to be the inclusion. The next paragraph shows how to deduce the general case from the study of the simple model.

\subsection{Integrability over the cusps: the general case}

\paragraph{Using the parabolicity.} The structure $(\DD,\rho)$ is parabolic: after Möbius changes of coordinates at the source and at the goal, it is possible to assume that $\DD:\Hyp_{\geq 1}\to\Hyp_{\geq 1}$ is a biholomorphism on its image which commutes with $z\mapsto z+1$.

It comes from the proof of Proposition \ref{sectionlipschitz}, that the modulus of $\DD'$ for the hyperbolic metric has to be bounded away from $0$ and $\infty$ in a fundamental domain of $z\mapsto z+1$ (which we may choose to be $C^+$): since $\DD$ commutes with the hyperbolic isometry $z\mapsto z+1$, it is bounded in the whole $\Hyp_{\geq 1}$. This implies that $\DD$ is bilipschitz in the whole $\Hyp_{\geq 1}$.

\paragraph{Controlled distortion in a box.} The key idea is that in the box $A_{\xi}(y)$ defined by \eqref{axi}, it is possible to control the distortion of spherical distance induced by $\DD$.

\begin{proposition}
\label{distortiondeveloping}
There exist constants $C>0$ and $\alpha>1$ such that for every $y\geq 1, \xi\in\R$ and $z_1,z_2\in A_{\xi}(y)$,
$$\dist_{\C\PP^1}(\DD(z_1),\DD(z_2))\geq\frac{C}{y^{\alpha}}\dist_{\C\PP^1}(z_1,z_2).$$
\end{proposition}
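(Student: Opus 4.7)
The plan is to route the comparison through the hyperbolic metric, where the preceding paragraph tells us that $\DD$ is bilipschitz on $\Hyp_{\geq 1}$ with some constant $K$. I will glue three successive infinitesimal comparisons. At the source, Lemma \ref{square} gives $\im z \asymp y$ and $|z| \lesssim y$ on $A_\xi(y)$, so the ratio between the Fubini--Study and hyperbolic conformal factors is $\im z/(1+|z|^2) \asymp 1/y$; since $A_\xi(y)$ has bounded hyperbolic diameter, this integrates to $\dist_\Hyp(z_1,z_2) \geq c_1\, y\, \dist_{\C\PP^1}(z_1,z_2)$. The middle step is simply bilipschitzness: $\dist_\Hyp(\DD z_1,\DD z_2) \geq c_2\, \dist_\Hyp(z_1,z_2)$.

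The main obstacle will be the target-side comparison: one needs polynomial-in-$y$ control on both $|\DD z|$ from above and $\im \DD z$ from below over $A_\xi(y)$. Decomposing $z = z_0 + n$ with $z_0 \in C^+$ and $n \in \Z$, the commutation $\DD(z+1) = \DD(z)+1$ yields $\DD z = \DD z_0 + n$. For $z_0 \in C^+$ with $\im z_0 = y$ one has $\dist_\Hyp(z_0,i) \asymp \log y$, so bilipschitzness in both directions confines $\dist_\Hyp(\DD z_0,\DD i)$ to the interval $[(\log y)/K,\, K\log y]$. Descending to the punctured disc via $w = e^{2\mathbf{i}\pi z}$ produces a biholomorphism $\widetilde{\DD}$ of a neighborhood of the origin in $\mathbb{D}^*$ with $\widetilde{\DD}(0) = 0$ by parabolicity (and $\widetilde{\DD}'(0) = a \neq 0$ by injectivity); the expansion $\widetilde{\DD}(w) = aw + O(w^2)$ then gives $\re \DD z_0 = \re z_0 + O(1)$ uniformly in $y$, so the hyperbolic distance above is essentially vertical and forces $\im \DD z_0 \in [c y^{1/K},\, C y^K]$. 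Combining with $|\re \DD z| \leq |\re z| + O(1) \leq C_0 y + O(1)$ yields $|\DD z| \leq C' y^{\max(1,K)}$ and $\im \DD z \geq c y^{1/K}$ on $A_\xi(y)$.

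Plugging these into the target conformal factor $\im \DD z/(1+|\DD z|^2) \gtrsim y^{1/K - 2\max(1,K)}$, one obtains $\dist_{\C\PP^1}(\DD z_1,\DD z_2) \geq c_3\, y^{1/K - 2\max(1,K)}\, \dist_\Hyp(\DD z_1,\DD z_2)$. Chaining this with the first two estimates and using $y \geq 1$ gives the proposition with $\alpha := 2\max(1,K) - 1/K - 1$, inflated if necessary to any larger value so as to exceed $1$. The residual small range $y \in [1,y_0]$ where $A_\xi(y)$ may dip below $\Hyp_{\geq 1}$ is handled separately by a direct compactness argument, since $\DD$ is smooth there and $(\xi, z)$ ranges in a compact set after further periodicity reduction.
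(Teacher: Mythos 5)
Your proof is correct, and it reaches the conclusion by a genuinely different route from the paper at the key technical step, namely the localization of $\DD(A_{\xi}(y))$ in a box of polynomial size in $y$. The paper proves this (Lemma \ref{localizeD}) by a geometric shadowing argument: a bilipschitz self-map of $\Hyp_{\geq 1}$ fixing $\infty$ sends the vertical geodesic through $z_0$ to a curve that stays within bounded distance of the vertical geodesic through $\DD(z_0)$ (Thurston's argument), and then one compares hyperbolic and euclidean orthogonal projections onto that geodesic to control $\im\DD(z)$ and $\re\DD(z)$. You instead descend to the punctured disc via $w=e^{2\mathbf{i}\pi z}$ and exploit the power-series normal form $\widetilde{\DD}(w)=aw+O(w^2)$, which indeed follows from parabolicity: since $\DD=z+g(z)$ with $g$ periodic, $\widetilde{\DD}(w)=we^{2\mathbf{i}\pi\widetilde{g}(w)}$, and the boundedness of $\widetilde{\DD}$ near $0$ rules out a pole or essential singularity of $\widetilde{g}$, giving $\widetilde{\DD}(0)=0$; injectivity then gives $\widetilde{\DD}'(0)\neq 0$. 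This analytic route is more elementary than Thurston's lemma and actually yields the sharper estimate $\DD(z)=z+\mathrm{const}+o(1)$, so that $\im\DD(z_0)=y+O(1)$ directly. You do not fully use this: having obtained $\re\DD(z_0)=\re z_0+O(1)$ from the expansion, you revert to the bilipschitz constant $K$ to pin down $\im\DD(z_0)\in[cy^{1/K},Cy^K]$, which is weaker but entirely sufficient. The remainder of your argument — the three-step conformal-factor chain, the exponent bookkeeping giving $\alpha=2\max(1,K)-1/K-1$, inflating $\alpha$ if needed using $y\geq 1$, and the compactness argument for the small-$y$ range — is sound and matches the paper's level of rigor (both treatments are terse about why the conformal-factor comparison integrates to a distance comparison; the bounded hyperbolic diameter of $A_\xi(y)$, which you do invoke, is the relevant fact). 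In short: correct, with a cleaner exploitation of parabolicity on the cusp than the paper's geometric route.
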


The fist step in the proof of this proposition is to prove:

\begin{lemma}
\label{localizeD}
There exist constants $C_1>0$, $\alpha>1$ such that for every $y\geq 1$, and $\xi\in\R$, $\DD(A_{\xi}(y))$ is included in the part of the complex plane identified with $[-C_1 y^{\alpha};C_1 y^{\alpha}]\times[C_1^{-1}y^{\alpha^{-1}};C_1 y^{\alpha}]$
\end{lemma}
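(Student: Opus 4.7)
\textbf{Plan of proof of Lemma \ref{localizeD}.}

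The plan is to exploit the parabolicity of the structure, via the passage to the punctured disk coordinate $w = e^{2\pi \mathbf{i} z}$, in order to compare $\DD$ to the identity up to a constant plus an exponentially small error on $\Hyp_{\geq 1}$. Combined with the a priori Euclidean localisation of $A_\xi(y)$ provided by Lemma \ref{square}, this will yield the claimed bounds easily.

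More precisely, the normalisations from \emph{Using the parabolicity} at the start of this subsection allow me to assume $\DD:\Hyp_{\geq 1}\to\Hyp_{\geq 1}$ is a biholomorphism onto its image commuting with $z\mapsto z+1$. Composing with $z\mapsto e^{2\pi\mathbf{i}z}$ at the source and at the goal, I obtain a univalent holomorphic map $\overline{\DD}$ from the punctured disc $\{0<|w|\leq e^{-2\pi}\}$ into itself, which is bounded near $0$. By Riemann's removable singularity theorem, $\overline\DD$ extends holomorphically at $0$ with $\overline\DD(0)=0$; by univalence, $a:=\overline\DD'(0)\neq 0$. Writing $\overline\DD(w)=aw(1+O(w))$ and taking an appropriate branch of the logarithm gives
$$\DD(z)=z+c+\epsilon(z),\qquad |\epsilon(z)|=O(e^{-2\pi\im(z)}),$$
with $c=\frac{\log a}{2\pi\mathbf{i}}$ a constant independent of $\xi,y$.

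Now I split according to the size of $y$. For $y$ larger than some threshold $y_1$, chosen so that the error $\epsilon$ is bounded by $1$ on the half-plane $\{\im(z)\geq C_0^{-1}y_1\}$, the asymptotic above combined with Lemma \ref{square} yields directly, for every $z\in A_\xi(y)$,
$$|\re(\DD(z))|\leq C_0 y+|c|+1,\qquad \tfrac{1}{2C_0}\,y\leq \im(\DD(z))\leq C_0 y+|c|+1.$$
For $1\leq y\leq y_1$, I use the $\Z$-equivariance to reduce to a point $z'\in C^+$ with $\im(z')\in[1,y_1]$. The region $\{z'\in\C;\,\re(z')\in[-1/2,1/2],\,\im(z')\in[1,y_1]\}$ is compact so $\DD(z')$ is bounded, whereas $\re(\DD(z))$ differs from $\re(\DD(z'))$ by an integer $n$ with $|n|\leq \re(z)+1/2\leq C_0 y+1/2$, and $\im(\DD(z))=\im(\DD(z'))\in[1,M]$ for some $M$. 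In both regimes one gets bounds of the form $|\re(\DD(z))|\leq C_1 y$ and $C_1^{-1}\leq\im(\DD(z))\leq C_1 y$; picking any $\alpha>1$ and enlarging $C_1$ if necessary (in particular so that $C_1\geq y_1^{1/\alpha}$) yields the claimed inclusion
$$\DD(A_\xi(y))\subset[-C_1y^\alpha,C_1y^\alpha]\times[C_1^{-1}y^{1/\alpha},C_1y^\alpha].$$

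The only nontrivial step here is the extraction of the asymptotic $\DD(z)=z+c+O(e^{-2\pi\im(z)})$; this is the main obstacle, but it follows from standard univalent function theory once one passes to the punctured-disc coordinate. Everything else is bookkeeping comparing Euclidean boxes, and in fact the argument shows that any $\alpha\geq 1$ works, so taking $\alpha>1$ provides comfortable slack for later use.
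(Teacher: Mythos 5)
Your route is genuinely different from the paper's and in fact gives a sharper conclusion. The paper first proves (Proposition~\ref{sectionlipschitz}) that $\DD$ is bilipschitz on $\Hyp_{\geq 1}$ for the hyperbolic metric with some constant $\alpha>1$, and then invokes a geodesic-stability argument (Thurston's notes, Section~5.9) to show that $\DD$ carries the vertical ray $[x+\mathbf{i};\infty)$ into a tube of bounded hyperbolic width around the vertical ray through $\DD(x+\mathbf{i})$; the exponent $\alpha$ in the box bound is precisely the bilipschitz constant. You instead pass directly to the punctured-disc coordinate, extend $\overline{\DD}$ through $0$, and read off $\DD(z)=z+c+O(e^{-2\pi\im(z)})$, which shows that $\DD(A_\xi(y))$ lies in a translate of the box of Lemma~\ref{square} up to bounded error, so that $\alpha=1+\eps$ works for any $\eps>0$. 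This is cleaner, makes the role of parabolicity more transparent, and is exactly the mechanism hidden inside the paper's Lemma~\ref{hologerm}, which also Taylor-expands in the punctured-disc coordinate but then only extracts the weaker Lipschitz statement.

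There is one step you treat as automatic that deserves an argument, namely the assertion $\overline{\DD}(0)=0$. Riemann's removable singularity theorem gives the holomorphic extension of $\overline{\DD}$ across the puncture, but says nothing about the value there. That the value must be $0$ is a topological/modulus fact: a punctured disc has infinite modulus, so its univalent image inside $\D^*_{e^{-2\pi}}$ cannot avoid a neighbourhood of $0$; equivalently, if $\overline{\DD}(0)=w_0\neq 0$ then $\DD(\Hyp_{\geq 1})$, being $\Z$-invariant and projecting onto a punctured neighbourhood of $w_0$, would contain the $\Z$-orbit of a lift of $w_0$ as punctures, contradicting that $\DD(\Hyp_{\geq 1})$ is a biholomorphic image of a half-plane and hence simply connected. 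The paper faces exactly the same point (Lemma~\ref{hologerm} is stated for germs \emph{fixing the origin} and is applied to $\overline{h}_i$ without verifying that hypothesis), so this is not a defect particular to your argument, but it should be spelled out. Finally, for $y$ close to $1$ the box $A_\xi(y)$ extends below $\Hyp_{\geq 1}$ (its imaginary parts reach $C_0^{-1}y<1$), so the normalised form $\DD:\Hyp_{\geq 1}\to\Hyp_{\geq 1}$ is not directly applicable there; both you and the paper implicitly handle this by restricting to $y\geq y_0\gg1$ as Remark~1 permits, but your bound $\im(\DD(z))\geq 1$ in the low-$y$ regime is silently using that restriction and would be cleanest if stated under it.
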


\begin{proof} Consider $x\in\R$, $y\geq 1$, $z_0=x+\mathbf{i}$ and $z=x+\mathbf{i}y$. We have $\dist_{\Hyp}(z_0,z)=\log y$. Since $\DD$ is bilipschitz, the quantity $\Delta=\dist_{\Hyp}(\DD(z_0),\DD(z))$ lies in $[\alpha^{-1}\log y;\alpha\log y]$ for some $\alpha>1$.

A classical argument of hyperbolic geometry (see Section 5.9 of Thurston's notes \cite{T}) shows that since $\DD$ is bilipschitz in $\Hyp_{\geq 1}$ and fixes $\infty$, it sends the vertical geodesic ray $[z_0;\infty)$ onto a curve which stays at bounded distance, say $\delta$, of the vertical geodesic ray $[\DD(z_0);\infty)$. Moreover, $\DD$ is bounded in the compact set $[-1/2;1/2]\times\{1\}$, $\im\circ\DD$ is invariant by $z\mapsto z+1$, and $\re\circ\DD$ commute with this translation. Hence, $\re(\DD(z_0))-x$ and $\im(\DD(z_0))$ are uniformly bounded functions of $x$.

Define respectively $p_h$ and $p_e$ the orthogonal projections of $\DD(z)$ on $[\DD(z_0);\infty)$ for the hyperbolic and euclidian metric. Note that $p_e$ and $\DD(z)$ have the same imaginary part. By triangular inequality, the difference between $\Delta$ and $\dist_{\Hyp}(\DD(z_0),p_h)$ is uniformly bounded. Finally, since $\im(\DD(z_0))$ is uniformly bounded, the same holds for $\Delta-\log(\im(p_h))$.

Now, the curve $\DD([z_0;\infty))$ stays in a cone around the (complete) vertical geodesic passing through $\DD(z_0)$ whose angle is bounded by a quantity depending only on $\delta$. Elementary trigonometry implies that the ratio $\im(p_h)/\im(p_e)$ is uniformly log-bounded. We deduce that the difference $\Delta-\log\im(\DD(z))$ is uniformly bounded. A similar argument also shows that the difference $\re(\DD(z))-\re(\DD(z_0))$ stays in a log bounded ratio with $\im(\DD(z))$.

From all this, we deduce the existence of uniform $C'>0$ and $\alpha>1$ such that for every $x\in\R$ and $y\geq 1$, $\DD(x+\mathbf{i}y)$ lies in the part of the complex plane identified with  $[x-C'y^{\alpha};x+C' y^{\alpha}]\times[C'^{-1}y^{\alpha^{-1}};C' y^{\alpha}]$.

Now in order to finish the proof, use Lemma \ref{square}: when $x\in[-1/2;1/2]$, $A_{\xi}(y)$ stays in a box which is uniformly of the size of $y$.
\end{proof}

We now finish the proof of Proposition \ref{distortiondeveloping} using Lemma \ref{localizeD}. The proof of Lemma \ref{conformaldistortion} adapts to prove the existence of constants $C_2,C_3>0$ such that for every $y\geq 1$, and $z_1,z_2\in A_{\xi}(y)$, one has:
$$\dist_{\C\PP^1}(\DD(z_1),\DD(z_2))\geq \frac{C_2}{y^{\alpha}}\dist_{\Hyp}(\DD(z_1),\DD(z_2))\geq\frac{C_2 C_3}{y^{\alpha}}\dist_{\Hyp}(z_1,z_2)\geq \frac{C_2C_3}{y^{\alpha}}\dist_{\C\PP^1}(z_1,z_2),$$
where the last inequality is true because we recall that the hyperbolic and Fubini-Study metrics are conformally equivalent with a conformal factor $\leq 1$ in $\Hyp_{\geq 1}$.\quad \hfill $\square$

\paragraph{End of the story.} Recall that we want to prove that the function $\widetilde{\psi}$ defined by \eqref{lafonctionpsi} is Liouville-integrable in $C^+\times\R$. Since for every $x\in[-1/2;1/2]$, $y\geq 1$, $z=x+\mathbf{i}y$ and $\xi\in\R$, $G_{[0;1]}(z,\xi)\dans A_{\xi}(y)$, there is something to prove only when $\widetilde{s}^+(\xi)\in\DD(A_{\xi}(y))$. In that case, by Proposition \ref{distortiondeveloping}, we have for every $t\in[0;1]$,
$$\log(\dist_{\C\PP^1}(\widetilde{s}^+(\xi),\DD(G_t(z,\xi))))\leq C'+\alpha\log y+\log(\dist_{\C\PP^1}(\DD^{-1}(\widetilde{s}^+(\xi)),G_t(z,\xi))),$$
$C'$ being a uniform constant. Using the Liouville-integrability of $(x,y,\xi)\mapsto C'+\log y$ in $C^+\times\R$, we see that the Liouville-integrability of $\widetilde{\psi}$ is implied by that of a function that we already treated in the last paragraph. This allows us to conclude the proof of the general case of Proposition \ref{integrability}, and consequently that of Theorem \ref{limitedprojective}.\quad \hfill $\square$

\vspace{10pt}
\paragraph{Acknowledgments.} This paper grew out of several visits of the second author in the Institut de Mathématiques de Bourgogne, whose warm hospitality we wish to thank, which were financed by the ANR \emph{DynNonHyp} BLAN08-2 313375. It was terminated as both authors were Post-Doctorates in Rio de Janeiro, financed by CAPES-Brazil, respectively at IMPA and at the UFRJ. It benefited from decisive comments and ideas given by Christian Bonatti, Bertrand Deroin, Patrick Gabriel, Pablo Lessa, Graham Smith and Johan Taflin: we are very grateful to all of them. Last but not least, we wish to thank the anonymous reviewer, whose comments led us to precise the notion of parabolic projective structure.

\noindent \textbf{Sébastien Alvarez (sebastien.alvarez@u-bourgogne.fr)}\\
\noindent  Instituto de Matem{\'a}tica Pura e Aplicada.\\
\noindent  Est. D. Castorina 110, 22460-320, Rio de Janeiro, Brazil\\
\noindent \textbf{Nicolas Hussenot (nicolashussenot@hotmail.fr)}\\
\noindent  Instituto de Matem\'	atica, Universidade Federal do Rio de Janeiro\\
\noindent  P. O. Box, 68530, 21945-970 Rio de Janeiro, Brazil\\


\begin{thebibliography}{widest-label}

\bibitem[Al1]{Al1} S.Alvarez, Discretization of harmonic measures for foliated bundles, \emph{C. R. Acad. Sci. Paris, Ser I.}, \textbf{350}, (2012), 621-626.

\bibitem[Al2]{Al2} S.Alvarez, Harmonic measures and the foliated geodesic flow for foliations with negatively curved leaves, to appear in \emph{Ergod. Th. \& Dynam. Sys.}.

\bibitem[Al3]{Al3} S.Alvarez, Gibbs measures for foliated bundles with negatively curved leaves, preprint, [arXiv:1311.3574], (2013).

\bibitem[Al4]{Al4} S.Alvarez, Mesures de Gibbs et mesures harmoniques pour les feuilletages aux feuilles courbées négativement, \emph{Thèse de l'Université de Bourgogne}, (2013), available online at http://tel.archives-ouvertes.fr/tel-00958080.

\bibitem[AV]{AV} A.Avila, M.Viana, Simplicity of Lyapunov spectra: a sufficient criterion, \emph{Port. Mat.}, \textbf{64}, (2007), 311-376.

\bibitem[BG]{BG} C.Bonatti, X.G\'omez-Mont, Sur le comportement statistique des feuilles de certains feuilletages holomorphes, \emph{Monogr. Enseign. Math.}, \textbf{38}, (2001), 15-41.

\bibitem[BGM]{BGM} C.Bonatti, X.G\'omez-Mont, M.Mart\'inez, Foliated hyperbolicity and foliations with hyperbolic leaves, preprint.

\bibitem[BGV]{BGV} C.Bonatti, X.G\'omez-Mont, M.Viana, G\'en\'ericit\'e d'exposants de Lyapunov non-nuls pour des produits d\'eterministes de matrices, \emph{Ann. I. H. Poincar\'e. Anal. Non Lin.}, \textbf{20}, (2003), 579-624.

\bibitem[BGVil]{BGVil} C.Bonatti, X.G\'omez-Mont, R.Vila, Statistical behaviour of the leaves of Riccati foliations, \emph{Ergod. Th. \& Dynam. Sys.}, \textbf{30}, (2010), 67-96.

\bibitem[BR]{BR} R.Bowen, D.Ruelle, The ergodic theory of Axiom A flows, \emph{Invent. Math.}, \textbf{29}, (1975), 181-202.

\bibitem[BS]{BS} R.Bowen, C.Series, Markov maps associated with fuchsian groups, \emph{Inst. Hautes Études Sci. Publ. Math.}, \textbf{50}, (1979), 153-170.

\bibitem[CDFG]{CDFG} G.Calsamiglia, B.Deroin, S.Frankel, A.Guillot, Singular sets of holonomy maps for algebraic foliations, \emph{ J. Eur. Math. Soc.}, \textbf{15}, (2013), 1067-1099.

\bibitem[Br]{Br} M.Brunella, \emph{Birational geometry of foliation}, Publicações Matemáticas do IMPA, Instituto de Matemática Pura e Aplicada (IMPA), Rio de Janeiro, 2004.

\bibitem[DD]{DD} B.Deroin, R.Dujardin, Complex projective structures: Lyapunov exponent and harmonic measure, preprint, [arXiv:1308.0541], (2013).

\bibitem[FRY]{FRY} J-P.Fran{\c c}oise, N.Roytvarf, Y.Yomdin, Analytic continuation and fixed points of the Poincaré mapping for a polynomial Abel equation, \emph{ J. Eur. Math. Soc.}, \textbf{10}, (2008), 543-570.

\bibitem[Fu1]{Fu1} H.Furstenberg, Noncommuting random products, \emph{Trans. Amer. Math. Soc.}, \textbf{108}, (1963), 377-428.

\bibitem[Fu2]{Fu2} H.Furstenberg, Random walks and discrete subgroups of Lie groups, \emph{Adv. Probab. Related Topics}, \textbf{1}, Dekker, New York (1971), 1-63.

\bibitem[Fu3]{Fu3} H.Furstenberg, Boundary theory and stochastic processes on homogeneous spaces. \emph{Harmonic analysis on homogeneous spaces(Proc. Sympos. Pure Math., Vol. XXVI, Williams Coll., Williamstown, Mass., 1972)}, Amer. Math. Soc., Providence, R.I. (1973), 193-229.

\bibitem[Gar]{Gar} L.Garnett, Foliations, The ergodic theorem and Brownian motion, \emph{J. Funct. Anal.}, \textbf{51}, (1983), 285-311.

\bibitem[He]{He} D.Hejhal, Monodromy groups and linearly polymorphic functions, \emph{Acta Math.}, \textbf{135}, (1975), 1-55.

\bibitem[Hi]{Hi} E.Hille, \emph{Ordinary differential equations in the complex domain}, Reprint of the 1976 original, Dover Publications, Inc., Mineola, NY, 1997.

\bibitem[Ho]{Ho} E.Hopf, Ergodic theory and the geodesic flow on surfaces of constant negative curvature, \emph{Bull. Amer. Math. Soc.}, \textbf{77}, (1971), 863-877.

\bibitem[H]{H} N.Hussenot, Analytic continuation of holonomy germs of Riccati foliations along Brownian paths, preprint, [arXiv:1310.4763], (2013).

\bibitem[I1]{I1} Y.Ilyashenko, Centennial history of Hilbert's 16th problem, \emph{Bull. Amer. Math. Soc. (N.S.)}, \textbf{39}, (2002), 301-354.

\bibitem[I2]{I2} Y.Ilyashenko, Persistence theorems and simultaneous uniformization, \emph{Proc. Steklov. Inst. Math.}, \textbf{254}, (2006), 184-200.

\bibitem[I3]{I3} Y.Ilyashenko, Some open problems in real and complex dynamical systems, \emph{Nonlinearity}, \textbf{21}, (2008), T101-T107.

\bibitem[KH]{KH} A.Katok, B.Hasselblatt, \emph{Introduction to the modern theory of dynamical systems}, Cambridge University Press, Cambridge, 1995.

\bibitem[K]{K} A.Khovanskii, Topological obstructions to the representability of functions by quadratures, \emph{J. Dynam. Control Systems}, \textbf{1}, (1995), 91-123.

\bibitem[LS]{LS} F.Ledrappier, O.Sarig, Fluctuations of ergodic sums for horocyclic flows on $\Z^d$-covers of finite volume surfaces, \emph{Discrete Contin. Dyn. Syst.}, \textbf{22}, (2008), 247-325.

\bibitem[Lo]{Lo} F.Loray, Sur les theorèmes I et II de Painlevé, \emph{Geometry and Dynamics, Contemp. Math.},  Amer. Math. Soc., \textbf{389}, (2005), 165-190.

\bibitem[LySu]{LySu} T.Lyons, D.Sullivan, Function theory, random paths and covering spaces, \emph{J. Diff. Geom.}, \textbf{19}, (1984), 299-323.

\bibitem[Ma]{Ma} M.Mart\'inez, Measures on hyperbolic surface laminations, \emph{Ergod. Th. \& Dynam. Sys.}, \textbf{26}, (2006), 847-867.

\bibitem[Se]{Se} C.Series, Geometrical Markov coding of geodesics on surfaces of negative curvature, \emph{Ergod. Th. \& Dynam. Sys.}, \textbf{6}, (1986), 601-625.

\bibitem[Si]{Si} Ya.Sinai, Gibbs measures in ergodic theory, \emph{Russ. Math. Surveys}, \textbf{166}, (1972), 21-69.

\bibitem[T]{T} W.Thurston, Geometry and topology of 3-manifolds, Princeton Lecture Notes, 1980, currently available at library.msri.org/books/gt3m/

\end{thebibliography}
\end{document}